\documentclass[10pt]{amsart}
\usepackage{a4wide}
\usepackage{amsfonts}

\usepackage{amsmath}
\usepackage{amssymb}
\usepackage{amsthm}
\usepackage{graphicx}

\newtheorem{proposition}{Proposition}

\newtheorem {lemma}{Lemma}
\newtheorem {thm}{Theorem}
\newtheorem {rem}{Remark}

\newcommand{\eps}{\varepsilon}

\newtheorem {corollary}{Corollary}
\newtheorem*{assumption}{Assumption}
\newcommand{\E}{\mathbb{E}}
\newcommand{\N}{\mathbb{N}}
\newcommand{\R}{\mathbb{R}}

\newcommand{\1}{\mathbf{1}}
\renewcommand{\P}{\mathbb{P}}

\newtheorem {example}{Example}
\numberwithin{equation}{section}
\numberwithin{equation}{section}
\numberwithin{equation}{section}


\begin{document}
\title{(Non)Differentiability and Asymptotics for Potential Densities of Subordinators}
\author{Leif D\"oring}

\address{Institut f\"ur Mathematik, Technische Universit\"at Berlin, Stra\ss e des 17.~Juni 136, 10623 Berlin, Germany}
\thanks{The first author was supported by the EPSRC grant EP/E010989/1.}
\email{leif.doering@googemail.com}
\author{Mladen Savov}
\address{New College, University of Oxford, Holywell Street, Oxford OX1 3BN, United Kingdom}
\email{savov@stats.ox.ac.uk}
\subjclass[2000]{Primary 60J75; Secondary 60K99}
\date{}
\keywords{L\'evy process, Subordinator, Creeping Probability, Renewal Density, Potential Measure}

\maketitle
\begin{abstract}
	For subordinators $X_t$ with positive drift we extend recent results on the structure of the potential measures
	\begin{align*}
		U^{(q)}(dx)=\E\left[\int_0^{\infty}e^{-qt}\mathbf{1}_{\{X_t\in dx\}}\,dt\right]
	\end{align*}	
	and the renewal densities $u^{(q)}$.  Applying Fourier analysis a new representation of the potential densities is derived from which we deduce asymptotic results and show how the atoms of the L\'evy measure $\Pi$ translate into points of 	(non)smoothness of $u^{(q)}$.
\end{abstract}

\section{Introduction}
    Let $X=(X_{t})_{t\geq0}$ be a L\'evy process, i.e. a real-valued stochastic process with stationary and independent increments which possesses almost surely right-continuous sample paths and starts from zero. If $X$ has almost surely non-decreasing paths it is called a subordinator and can be thought of as an extension of the renewal process which accommodates for infinitely many jumps on a finite time horizon. In fact any such process can be written as $$X_{t}=\delta t+\sum_{s\leq t}\Delta_{s},$$ where $\Delta_{.}$ are random, positive jumps drawn from a Poisson random measure. The non-negative coefficient $\delta$ is referred to as drift of the subordinator.
    The jumps $\Delta_\cdot$ are described by a deterministic intensity measure $\Pi(dx)$, usually referred to as L\'evy measure, that is concentrated on the non-negative reals and satisfies the integrability condition
	\begin{align}\label{c}
		\int_{0}^{\infty}(1\wedge x)\Pi(dx)<\infty.
	\end{align}
	The main object of our study are the $q$-potential measures, $q\geq 0$,
	\begin{align}\label{Mladen1}
		U^{(q)}(dx)=\E\left[\int_0^{\infty}e^{-qt}\mathbf{1}_{\{X_t\in dx\}}\,dt\right]
	\end{align}
	of $X$ whose distribution function will be denoted by $U^{(q)}(x)=U^{(q)}([0,x])$. The slight abuse of notation will always become clear from the context.
	Being the central object of the potential theory for subordinators, $U^{(q)}$ is a well-studied object. For more information we refer the reader to Chapter 3 of \cite{B96}. Whenever $\delta>0$ each point $x>0$ is visited by $X$ with positive probability and it is known that in this case the measure $U^{(q)}$ admits a bounded and continuous density $u^{(q)}=(U^{(q)})'$ with respect to the Lebesgue measure.

\smallskip
Our main objective is to study the regularity of the $q$-potential densities $u^{(q)}$ in more depth. Under mild restrictions on the L\'evy measure the main results concern differentiability properties of $u^{(q)}$ when $\Pi$ has atoms that may accumulate at zero. We characterize the set of values where differentiability of $u^{(q)}$ fails as well as the order of derivatives where smoothness breaks down for each point of this set. More precisely, if $G$ denotes the set of atoms of $\Pi(dx)$ then the $n$th derivative of $u^{(q)}$ exists precisely outside of $G_{n}$, where $G_{n}$ contains all numbers that can be represented as a sum of at most $n$ elements of $G$. Also we provide asymptotic results for $u^{(q)}(x)$ as $x$ goes to zero and infinity. For the results at infinity we need to restrict to renewal densities for which from now on we use the conventions $u=u^{(0)}$ and $U=U^{(0)}$.

\smallskip
The paper is organized as follows:  Motivation and further background of our research is presented in Section \ref{M1}. An example for the simplest L\'evy measure $\Pi=\delta_1$ is included for which direct computations can be performed to present our abstract results in a simple setting.
 In Section \ref{S2} the main results are presented and are proved in Section \ref{S3}.

\section{Motivation and Background}	\label{M1}
\subsection{Background}\label{MM1}
The $q$-potential measures $U^{(q)}$ are the fundamental quantities in potential theory and underly the study of Markov Processes in general and L\'evy processes in particular. As subordinators appear naturally in the study of Markov processes (see Chapter IV in \cite{B96}) and are in some sense the simplest L\'evy processes, it is not surprising that important information about its potential measures has already appeared in the literature. For our special case when we study a subordinator with positive drift, it is known that with $T_{x}=\inf\{t\geq 0:X_{t}>x\}$
\begin{align}
		u(x)=\frac{1}{\delta}\P[X_{T_x}=x],\label{creep}
\end{align}
i.e. $u(x)$ has the clear probabilistic interpretation as creeping probability of $X$ at level $x$. From this it follows from a renewal argument that $u$ solves the renewal type equation
	$$\delta u(x)+\int_0^xu(x-y)\bar \Pi(y)\,dy=1$$
which will be the starting point for our analysis. This simple renewal structure motivates the alternative naming of $u$ as renewal density. Here and in the following
$$\bar\Pi(y)=\Pi\big([y,\infty)\big)=\int_y^\infty \Pi(dx)$$
denotes the tail of the L\'evy measure. In fact, similar renewal type equations for $u^{(q)}$ are derived below allowing for a study for all $q\geq 0$. 
	
\smallskip
The functions $u^{(q)}$ are rarely known explicitly but can be described using the universal representation of the Laplace transform of $U^{(q)}(dx)$. On page 74 of \cite{B96} it is shown that
\begin{equation}\label{LaplaceUq}
	\int_{0}^{\infty}e^{-\lambda x}U(dx)=\frac{1}{\psi(\lambda)},\,\,\,\lambda>0,
\end{equation}
where $\psi(\lambda)$ is the the Laplace exponent
\begin{align*}
	\psi(\lambda)=\delta \lambda+\int_0^\infty (1-e^{-\lambda x})\Pi(dx).
\end{align*}
In few exceptional cases such as stable subordinators without drift, i.e. $\Pi(dx)=cx^{-1-\alpha}dx$ for some $\alpha\in (1,2)$, this formula sufficiently simplifies and allows for an explicit Laplace inversion.

\smallskip
Smoothness issues for the particular case $q=0$ have attracted some attention in special cases motivated by applications; we shall only highlight two recent results:\\
First, it is known that $u$ is completely monotone and henceforth infinitely differentiable for the class of so-called special subordinators whose Laplace exponent is a special Bernstein function (see \cite{SV06}). This, as we will show below, is not true for general subordinators. Furthermore, for special subordinators, $u$ is a decreasing function. Monotonicity fails in general but we show that a similar structural property holds: For all $q\geq 0$, $u^{(q)}$ is of bounded variation and hence a difference of two increasing functions.\\
Secondly, when the L\'evy measure $\Pi(dx)$ has no atoms and $\delta>0$ it has been shown in \cite{CKS10} that $u$ is continuously differentiable. This result can also be recovered by Kingmann's study of $p$-functions (see Chapter 3 of \cite{K72}) or by our representation \eqref{a} below.

\smallskip
We note that the renewal density $u$ is in fact the Kingmann's $p$-function which is naturally associated to a regenerative event, see \cite{K64} and Chapter 3 in \cite{K72}. Using this link one could recover some of our results below for the special case $q=0$.
 However these results are preliminary in our study and we focus on finer properties of the functions $u^{(q)}$ for $q\geq 0$.

\medskip
A further topic of interest are the asymptotics of $u$ and its derivatives. The asymptotic at zero and infinity are classically deduced from the renewal theorem. The relation
\begin{align}
	 	u(x)&\to \frac{1}{\delta},\quad\text{ as }x\to 0,\label{d}
\end{align}
is stated in Theorem 5 in \cite{B96}, whereas
 \begin{align}
		u(x)&\to \frac{1}{\E[X_1]},\quad\text{ as }x\to \infty,\label{dd}
\end{align}
 seems to have been obtained first in \cite{HS01}. We apply two representations developed in the present paper to understand the asymptotic behaviour of $u^{(q)}$ and $(u^{(q)})'$ in more detail. It turns out that different representations are of very different use: A series representation can be used to deduce the asymptotic properties at zero, whereas a Fourier inversion representation is useful to understand the more delicate asymptotic at infinity. The use of the Riemann-Lebesque theorem for the Fourier inversion representation forces us to assume $q=0$ so that those results are restricted to renewal densities.

\subsection{Applications}\label{MM2}
The motivation for our work is mostly theoretical but we outline below some applications. Our theoretical motivation stems from the mere importance of potential measures in the study of L\'evy processes. As such those appear in various one-sided and double-sided exit problems and especially the probability of hitting a boundary point of an interval at first exit out of it is represented in terms of $u$ or if the L\'evy process is killed at an independent exponential time in terms of $u^{(q)}$ (see Theorem 19 of Chapter VI in \cite{B96}).

\smallskip
Potential densities appear equally in other theoretical studies; For example, representation \eqref{a} below was used to guess the necessary and sufficient analytic condition for existence of right-inverses of L\'evy processes which then, in \cite{DS10}, have been proved via subsequent discoveries.


\smallskip
Our results should be applicable for simulations of subordinators as detailed knowledge of the set of non-smoothness of $u^{(q)}$ could significantly speed up the numerics behind the simulation/computations of potential densities. This has been noticed in relation with scale functions by Kuznetsov, Kyprianou and Rivero (private communication) caused by the celebrated identity $W'(x)=u(x)$ between scale functions of a spectrally negative L\'evy process and the down-going ladder height subordinator (for more information see Chapter 7 of \cite{B96}).  The same holds for potential measures of subordinators. Recall that the scale function is positive and increasing, and is involved in computing double exit probabilities for spectrally negative L\'evy processes $X$, i.e. processes that can jump only downwards. More precisely, with $T_{[-a,b]}=\inf\{t\geq 0:\,X_{t}>b\,\text{ or }X_{t}<-a\}$
\[P\big(X_{T[-y,x]}>x\big)=\frac{W(y)}{W(x+y)}, \text{ for all $x>0$ and $y>0$}.\]

\smallskip
Finally, let us point out a possible extension. For atomic L\'evy measures our main results, Theorem \ref{t4} and Theorem \ref{t9}, link the atoms of $\Pi$ to points of non-differentiability of $u^{(q)}$. It seems possible to extend this to higher order singularities of $\bar\Pi(x)$. Such a generalization would be extremely useful for numerical computations of the scale function  which is a basic quantity in insurance mathematics as it allows for estimation and calculation the double exit probabilities which in the actuarial mathematics context are ruin probabilities. For more background see \cite{CY05} and \cite{AKP04}.
\subsection{Example}\label{MM3}	
	To motivate our findings, here is an illustrative example for which the renewal density $u$ can be calculated explicitly and some non-trivial connections between the atom of $\Pi$ and differentiability of $u$ can be observed.
\begin{example}\label{example}
	Suppose $\delta=1$ and $\Pi$ consists of just one atom at $1$ with mass $1$, i.e. $\Pi=\delta_1$, implying that $\bar \Pi=\mathbf{1}_{[0,1]}$. For $x<1$ one immediately obtains	
	\begin{align*}
		u(x)=\P[X_{T_x}=x]=\P[\text{no jump before }x]=e^{-x\bar\Pi(0)}=e^{-x}.
	\end{align*}
 	For $x\in [n,n+1)$ we exploit the structure of the process more carefully. Since $x<n+1$, on the event of creeping at $x$ there can be at most $n$ jumps of height one before exceeding $x$. If precisely $i\leq n$ jumps occur, they need to appear before $x-i$ as otherwise the drift would have pushed $X$ above level $x$ before the final jump implying
	\begin{align*}
		\P[X_{T_x}=x]&=\sum_{i=0}^{n}\P[\text{exactly } i \text{ jumps before }x-i].
	\end{align*}
	As the number of jumps before $x-i$ is Poissonian with parameter $x-i$ we obtain
	\begin{align*}
		u(x)=
		\begin{cases}
			e^{-x}&:x<1,\\
			e^{-x}+\sum_{i=1}^n\frac{(x-i)^i}{i!}e^{-(x-i)}&:x\in[n,n+1).
		\end{cases}
	\end{align*}	
	The graphs of $u$ and it's derivative are plotted in Figure \ref{fig:u}.
	\begin{figure}
			\begin{center}
		\includegraphics[scale=0.2]{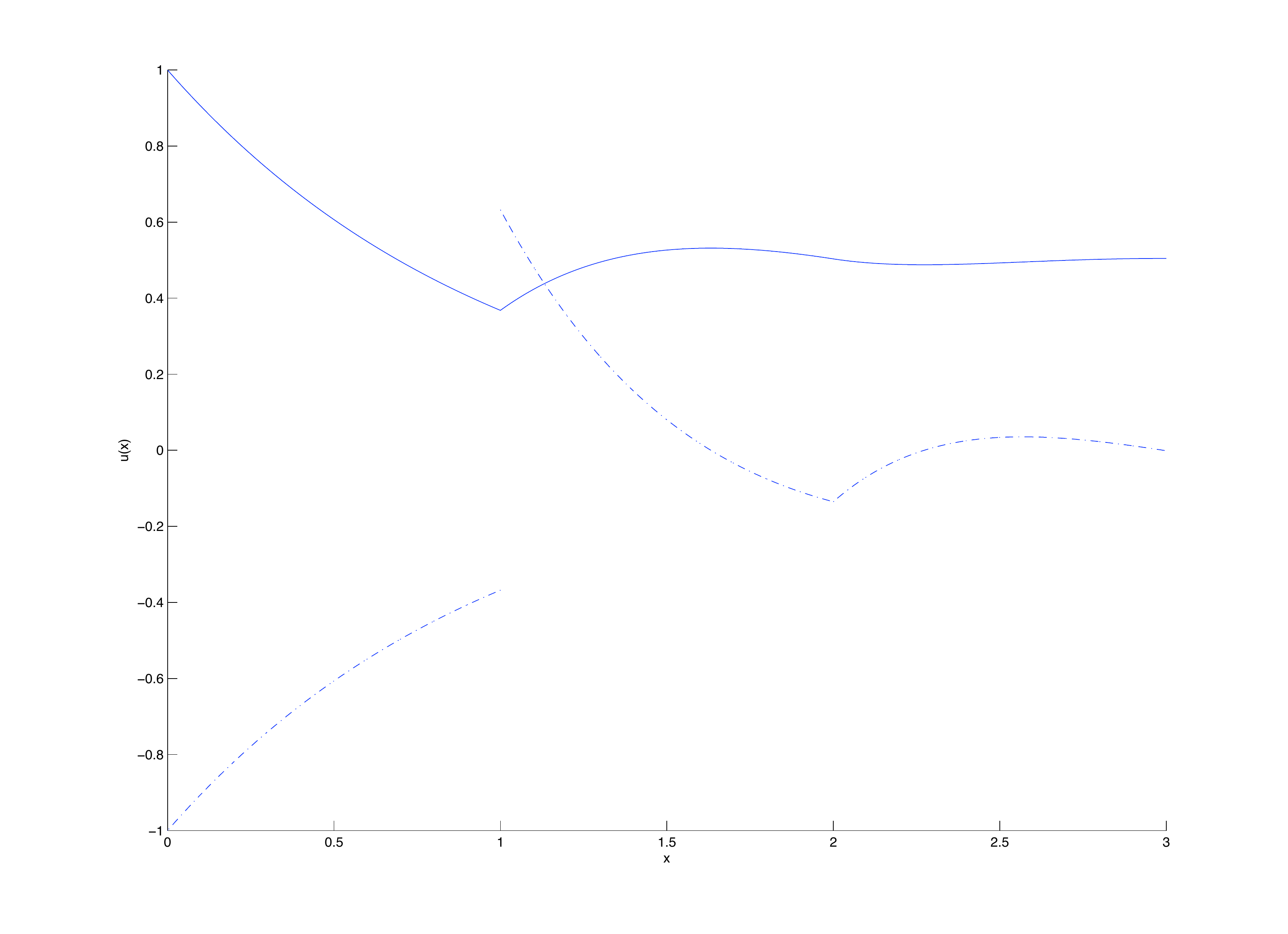}
		        \end{center}
			\caption{$u$ (solid curve), $u'$ (dashed curve) for $\delta=1$ and $\Pi=\delta_1$}
			\label{fig:u}
		\end{figure}
	This first representation already sheds some light on the possible behavior of $u$ for atomic L\'evy measures: in general $u$ is not monotone, is asymptotically linear at zero, and possibly non-differentiable.\\	
	Let us consider the issue of differentiability in more detail. Apparently $u$ is infinitely differentiable in the open intervals $(n,n+1)$, $n=0,1,...$, with
	\begin{align*}
		u'(x)=\begin{cases}
				-e^{-x}&:x<1,\\
				 -e^{-x}+\sum_{i=1}^n\frac{(x-i)^{i-1}}{(i-1)!}e^{-(x-i)}-\sum_{i=1}^n\frac{(x-i)^i}{i!}e^{-(x-i)}&:x\in(n,n+1).
			\end{cases}
	\end{align*}
	The only problematic points are the integers at which the piecewise defined function $u'$ gets the additional summands
	\begin{align*}
		\begin{cases}
			e^{-(x-1)}-(x-1)e^{-(x-1)}&:n=1,\\
			\frac{(x-n)^{n-1}}{(n-1)!}e^{-(x-n)}-\frac{(x-n)^n}{n!}e^{-(x-n)}&:n>1,
		\end{cases}
	\end{align*}
	when going from $x<n$ to $x>n$. This sheds some light on differentiability as for $x=1$ the right derivative is $1$ and the left derivative $0$ whereas for $n>1$ the remainder summand vanishes. In particular, this implies that $u$ is differentiable everywhere except at $1$ with
	\begin{align*}
		u'(x+)-u'(x-)=\Pi(\{x\}).
	\end{align*}
	Taking higher order derivatives, the same reasoning shows that at $\{n\}$, $u$ is $(n-1)$-times but not $n$-times differentiable.
\end{example}

	The example reveals three properties which we aim to understand for general subordinators: higher order differentiability of $u$ depends crucially on the atoms of $\Pi$, $u$ is generally not monotone and $u'$ vanishes at infinity and is asymptotically linear at zero.\medskip

	\section{Results}\label{S2}

	Our Fourier analytic approach forces us to control the small jumps. To this end we recall the Blumenthal-Getoor index $\beta(\Pi)$ for subordinators:
	\begin{align*}
		\beta(\Pi)=\inf\Big\{\gamma>0:\int_0^1x^{\gamma}\Pi(dx)<\infty\Big\}.
	\end{align*}
	Some of our main results are formulated under the following assumption.
	\begin{assumption}[\textbf{A}]\label{ass}
		Assume that $\beta(\Pi)<1$.
	\end{assumption}
	Note that property (\ref{c}) implies that $\beta(\Pi)\leq 1$ so that the assumption only rules out boundary cases and in particular any stable subordinator with drift is included. More direct computations for particular examples of interest show that our results will be still valid for Blumenthal-Getoor index $1$.

\subsection{Series and Integral Representations}\label{sec:representations}
	The main objective are the potential densities. Our results are motivated by the following extension of the renewal type equation for $u$.

\begin{lemma}\label{l1}
The $q$-potential measure $U^{(q)}(dx)$ has a density $u^{(q)}$ satisfying the renewal type equation
\begin{align}\label{a2}
	\delta u^{(q)}(x)=1-\int_0^xu^{(q)}(x-y)(\bar\Pi(y)+q)dy.
\end{align}
\end{lemma}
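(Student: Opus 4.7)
The plan is to verify the identity by Laplace transforms, relying on the existence of a bounded continuous density $u^{(q)}$, which is already recalled in the introduction as a classical consequence of $\delta>0$. So only the renewal-type equation itself needs to be established.

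First, by Fubini and the identity $\E[e^{-\lambda X_t}]=e^{-t\psi(\lambda)}$, I would extend \eqref{LaplaceUq} to
\begin{align*}
\int_{0}^{\infty}e^{-\lambda x}u^{(q)}(x)\,dx=\int_{0}^{\infty}e^{-\lambda x}U^{(q)}(dx)=\frac{1}{\psi(\lambda)+q},\qquad \lambda>0.
\end{align*}
Next, I would compute the Laplace transform of the tail plus the killing rate: swapping integrals gives
\begin{align*}
\int_{0}^{\infty}e^{-\lambda y}\bar\Pi(y)\,dy=\int_{0}^{\infty}\frac{1-e^{-\lambda x}}{\lambda}\Pi(dx)=\frac{\psi(\lambda)-\delta\lambda}{\lambda},
\end{align*}
so that $\int_0^\infty e^{-\lambda y}(\bar\Pi(y)+q)\,dy=\frac{\psi(\lambda)+q}{\lambda}-\delta$. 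The integrability used here follows from \eqref{c}.

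With these preparations the proof is a one-line algebraic verification. The convolution on the right-hand side of \eqref{a2} has Laplace transform
\begin{align*}
\frac{1}{\psi(\lambda)+q}\left(\frac{\psi(\lambda)+q}{\lambda}-\delta\right)=\frac{1}{\lambda}-\frac{\delta}{\psi(\lambda)+q},
\end{align*}
so the right-hand side of \eqref{a2} has Laplace transform $\frac{1}{\lambda}-\frac{1}{\lambda}+\frac{\delta}{\psi(\lambda)+q}=\frac{\delta}{\psi(\lambda)+q}$, which coincides with the Laplace transform of $\delta u^{(q)}$. By uniqueness of Laplace transforms, the identity holds Lebesgue-a.e.

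The only genuine point to check is that \eqref{a2} actually holds pointwise. For this I would observe that $u^{(q)}$ is bounded and continuous, while $\bar\Pi(\cdot)+q$ is locally integrable on $[0,\infty)$ by \eqref{c}; hence the convolution $\int_0^x u^{(q)}(x-y)(\bar\Pi(y)+q)\,dy$ is continuous in $x$. Both sides of \eqref{a2} are therefore continuous and their a.e.\ equality upgrades to pointwise equality. I expect this upgrade and the checking of integrability near zero (via \eqref{c}) to be the only subtlety; everything else is a routine Laplace transform computation.
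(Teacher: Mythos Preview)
Your proof is correct but follows a genuinely different route from the paper. The paper argues probabilistically: for $q>0$ it introduces an independent exponential time $e_q$, uses the Markov property to relate $U$ and $U^{(q)}$, establishes the killed creeping identity $\P(X_{T_x}=x;\,T_x\leq e_q)=\delta u^{(q)}(x)$, and then decomposes $\P(T_x\leq e_q)$ via the compensation formula to identify the overshoot term $\int_0^x\bar\Pi(x-y)u^{(q)}(y)\,dy$. Your approach bypasses all of this by simply verifying that both sides of \eqref{a2} have the same Laplace transform, using the known formula $\int_0^\infty e^{-\lambda x}U^{(q)}(dx)=(\psi(\lambda)+q)^{-1}$ and the elementary computation of $\mathcal{L}(\bar\Pi+q)$. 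What you gain is brevity and a self-contained analytic argument; what the paper's approach gains is the intermediate identity $\P(X_{T_x}=x;\,T_x\leq e_q)=\delta u^{(q)}(x)$, which extends the creeping interpretation \eqref{creep} to $q>0$ and has independent interest. Your continuity upgrade from a.e.\ to pointwise equality is the right finishing step and is handled correctly.
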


	Iterating the renewal type equation (\ref{a2}), one heuristically obtains a series representation for the potential density $u^{(q)}$. Making this a rigorous statement is slightly involved as $\bar\Pi$ might have a singularity at zero. The problems caused by the singularity can be circumvented taking into account only the local integrability of $\bar\Pi$ at zero which is a direct consequence of the property (\ref{c}). In \cite{CKS10} a proof for the following proposition was carried out for non-atomic L\'evy measures. In fact this proposition can be recovered from Chapter 3 in \cite{K72} where it is disguised in terms of $p$-functions. For completeness we sketch a proof below.
		
	\begin{proposition}[Series Representation for $u^{(q)}$]\label{prop1}
		The series representation
		\begin{align}\label{series}
			u^{(q)}(x)=\sum_{n= 0}^{\infty}\frac{(-1)^n}{\delta^{n+1}}\big(\1\ast \big(\bar\Pi+q\big)^{\ast n}\big)(x)
		\end{align}
		holds for the potential densities $u^{(q)}$. Here, $\1$ denotes the Heavyside function.
	\end{proposition}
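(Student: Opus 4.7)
The plan is to iterate the renewal-type equation of Lemma~\ref{l1} and pass to the limit; this is the density-level analogue of expanding $1/(\psi(\lambda)+q)$ as a geometric series on the Laplace side. Writing $f := \bar\Pi + q$, equation~(\ref{a2}) reads $\delta u^{(q)} = \mathbf{1} - u^{(q)}\ast f$, and a straightforward induction on $N$ (substituting the equation into itself on the right) yields
\begin{equation*}
u^{(q)}(x) = \sum_{n=0}^{N}\frac{(-1)^n}{\delta^{n+1}}\bigl(\mathbf{1}\ast f^{\ast n}\bigr)(x) + \frac{(-1)^{N+1}}{\delta^{N+1}}\bigl(u^{(q)}\ast f^{\ast(N+1)}\bigr)(x).
\end{equation*}
The claim therefore reduces to absolute convergence of the series at every fixed $x$ together with vanishing of the remainder as $N\to\infty$.

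Both of these follow from a single pointwise bound on the $n$-fold convolution. I will express $(\mathbf{1}\ast f^{\ast n})(x)$ as the integral of $f(y_1)\cdots f(y_n)$ over the simplex $\{y\in\R_+^n:\sum_{i=1}^n y_i\leq x\}$ and apply the elementary inequality $\mathbf{1}_{\{\sum y_i\leq x\}}\leq e^{\lambda(x-\sum y_i)}$, valid for every $\lambda\geq 0$, to obtain
\begin{equation*}
\bigl(\mathbf{1}\ast f^{\ast n}\bigr)(x)\ \leq\ e^{\lambda x}\hat f(\lambda)^n,
\end{equation*}
where $\hat f$ denotes the Laplace transform of $f$. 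Fubini in the definition of $\psi$ shows that $\hat f(\lambda) = (\psi(\lambda)+q)/\lambda - \delta$, and by~(\ref{c}) one has $\psi(\lambda)/\lambda\to\delta$ as $\lambda\to\infty$; hence $\hat f(\lambda)\to 0$, and I can fix $\lambda_0$ with $\rho := \hat f(\lambda_0)/\delta < 1$.

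With this bound the $n$-th summand is dominated by $\delta^{-1}e^{\lambda_0 x}\rho^n$, giving absolute (and locally uniform) convergence; and since $u^{(q)}$ is known to be bounded,
\begin{equation*}
\left|\frac{(-1)^{N+1}}{\delta^{N+1}}\bigl(u^{(q)}\ast f^{\ast(N+1)}\bigr)(x)\right|\ \leq\ \frac{\sup u^{(q)}}{\delta}\,e^{\lambda_0 x}\rho^{N+1}\ \longrightarrow\ 0,
\end{equation*}
so letting $N\to\infty$ in the iterated identity delivers~(\ref{series}).

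The only subtle step is the convolution bound. One might hope for a factorial gain $F(x)^n/n!$ via simplex symmetrisation, but this fails already for elementary non-constant $f\geq 0$; moreover $f$ may be singular at the origin (since $\bar\Pi$ need not be bounded) and is non-integrable at infinity unless $q=0$. The Laplace-exponential substitution bypasses both difficulties, converting the question into decay of $\hat f$ at infinity, which is precisely what the basic integrability~(\ref{c}) together with $\delta>0$ guarantees.
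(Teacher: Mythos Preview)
Your argument is correct. The iteration of the renewal equation is standard, the simplex representation of $\mathbf 1\ast f^{\ast n}$ is valid, the exponential-tilting bound $\mathbf 1_{\{\sum y_i\le x\}}\le e^{\lambda(x-\sum y_i)}$ gives the clean estimate $\mathbf 1\ast f^{\ast n}(x)\le e^{\lambda x}\hat f(\lambda)^n$, and the asymptotic $\psi(\lambda)/\lambda\to\delta$ (hence $\hat f(\lambda)\to 0$) follows from~(\ref{c}) by dominated convergence. Boundedness of $u^{(q)}$ is legitimately available at this point (it is quoted in the proof of Lemma~\ref{l1}), so the remainder control is sound. A tiny cosmetic slip: the constant in your final displayed bound should be $\sup u^{(q)}$ rather than $\sup u^{(q)}/\delta$, since $\hat f(\lambda_0)^{N+1}/\delta^{N+1}=\rho^{N+1}$ already absorbs all powers of $\delta$.

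The route differs from the paper's in two respects. First, the paper \emph{defines} the series, proves its convergence, checks that it solves~(\ref{a2}), and then establishes a uniqueness result for locally bounded solutions; you instead iterate~(\ref{a2}) starting from $u^{(q)}$ and show the remainder dies, which dispenses with the separate uniqueness step altogether. Second, the paper controls $\mathbf 1\ast f^{\ast n}$ via the pointwise inequality $\mathbf 1\ast f^{\ast n}(x)\le(\mathbf 1\ast f(x))^n$, which is only useful on a small interval $[0,b]$ where $\mathbf 1\ast f(b)<\delta$, and then bootstraps to larger $x$ by splitting the convolution and iterating; your Laplace bound is global from the outset and avoids that bootstrap. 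Your approach is shorter and arguably cleaner; the paper's has the minor advantage that its estimate $(\mathbf 1\ast f(x))^n$ is reused later (e.g.\ in the proof of Theorem~\ref{p2}), but nothing in the proposition itself requires it.
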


	At this point we should note that for $q=0$, Equation (\ref{series}) appears in a slightly different form in \cite{CKS10}. This is due to a different definition of convolutions. In this paper we define the convolution of two real functions $f$ and $g$ by
	\begin{align*}
		f\ast g(x)=\int_0^xf(x-y)g(y)\,dy,
	\end{align*}
	(and $\mathbf 1\ast f^{\ast 0}(x)=1$) whereas in \cite{CKS10} the convolution was defined by $f\ast g(x)=\int_0^xf(x-y)g'(y)\,dy$. In any case, with $F(x)=\int_{0}^{x}f(t)dt$ and $G(x)=\int_{0}^{x}g(t)dt$, we have $F\ast G=\1\ast f\ast g$, where the second convolution is in the sense of the present paper.\medskip
	
	Unfortunately, deriving properties of $u^{(q)}$ from (\ref{series}) is a delicate matter as one has to deal with an infinite alternating sum of iterated convolutions. Nevertheless, one particular property that follows from (\ref{series}) is that $u^{(q)}$ is of bounded variation.
				
	\begin{corollary}\label{t1}
		There are increasing functions $u_1^{(q)}$ and $u_2^{(q)}$ such that
		\begin{align*}
			u^{(q)}=u_1^{(q)}-u_2^{(q)}.
		\end{align*}
		In particular, $u^{(q)}$ is a function of bounded variation.
	\end{corollary}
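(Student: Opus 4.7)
The plan is to read the decomposition directly off of the series representation in Proposition \ref{prop1} by splitting it according to the parity of $n$. Writing $g=\bar\Pi+q$ for brevity, I would set
\begin{align*}
u_1^{(q)}(x)=\sum_{k=0}^\infty\frac{1}{\delta^{2k+1}}\bigl(\1\ast g^{\ast 2k}\bigr)(x),\qquad u_2^{(q)}(x)=\sum_{k=0}^\infty\frac{1}{\delta^{2k+2}}\bigl(\1\ast g^{\ast 2k+1}\bigr)(x).
\end{align*}
Each individual summand is of the form $\1\ast h$ with $h\ge 0$, i.e.\ the primitive of a non-negative function, and hence non-decreasing. Once both series are shown to converge pointwise, $u_1^{(q)}$ and $u_2^{(q)}$ are themselves non-decreasing (increasing limits of increasing functions), and by Proposition \ref{prop1} their difference is $u^{(q)}$. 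Bounded variation on every bounded interval is then automatic.

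The substantive step is pointwise convergence of the two series. Since their terms are non-negative, no cancellation is available, and I would prove the stronger statement
\begin{align*}
\sum_{n=0}^\infty\frac{1}{\delta^{n+1}}\bigl(\1\ast g^{\ast n}\bigr)(x)<\infty\qquad\text{for every }x>0.
\end{align*}
My plan is a Chernoff/exponential-tilting bound: for any $\theta>0$,
\begin{align*}
\bigl(\1\ast g^{\ast n}\bigr)(x)\le e^{\theta x}\int_0^\infty e^{-\theta y}g^{\ast n}(y)\,dy=e^{\theta x}\phi(\theta)^n,
\end{align*}
where $\phi(\theta)=\int_0^\infty e^{-\theta y}g(y)\,dy$; the equality is just the factorization of Laplace transforms under convolution. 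A short computation using integration by parts on $\bar\Pi$ yields $\theta\phi(\theta)=q+\psi(\theta)-\delta\theta$, and the classical asymptotics $\psi(\theta)/\theta\to\delta$ as $\theta\to\infty$ give $\phi(\theta)\to 0$. Hence $\theta$ can be chosen so large that $\phi(\theta)<\delta$, producing geometric convergence of the bound above.

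The main obstacle is precisely this quantitative convergence estimate; without it, the splitting into monotone pieces is only a formal rearrangement of an alternating series and need not make sense term-by-term. A minor worry is the possible singularity of $\bar\Pi$ at the origin, but the integrability condition \eqref{c} guarantees local integrability of $\bar\Pi$ and hence $\phi(\theta)<\infty$ for every $\theta>0$, so the tilting argument is valid throughout.
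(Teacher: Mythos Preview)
Your proof is correct and follows the same route as the paper: split the series in Proposition~\ref{prop1} by parity of $n$ so that each partial series has only non-negative, non-decreasing summands. The only place you deviate is in justifying that the two non-negative series converge. The paper simply invokes the absolute and locally uniform convergence already established in the proof of Proposition~\ref{prop1} (via the iterative bootstrap on intervals of length $b$), whereas you supply an independent Chernoff-type argument: bound $\1\ast g^{\ast n}(x)$ by $e^{\theta x}\phi(\theta)^n$ and use $\psi(\theta)/\theta\to\delta$ to force $\phi(\theta)<\delta$ for large $\theta$. Your argument is clean, self-contained, and avoids the somewhat fiddly induction over intervals; the paper's version has the advantage of not re-proving anything. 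Incidentally, the exponential-tilting idea you use is exactly what the paper later employs (for a different purpose) in the proof of Lemma~\ref{l33}.
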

	
	To obtain a deeper understanding, we introduce a more carefull representation for $u^{(q)}$ based on Fourier analysis. In the following we denote by
	\begin{align*}
	 	\mathcal L f(s)=\int_{{-\infty}}^{\infty}e^{-sx}f(x)dx
	\end{align*}
	the bilateral Laplace transform for complex numbers $s=\lambda+i\theta$. As $\bar\Pi(x)$ is only defined for $x>0$, we put $\bar\Pi(x)=0$ for $x\leq 0$.			
	\begin{proposition}[Laplace Inversion Representation of $u^{(q)}$]\label{laplaceinversionu}
		Under Assumption \textbf{(A)}, for arbitrary integer $N$ larger than $0$ and any fixed $\lambda>0$,
		\begin{align*}
			g^{N,q}(\lambda+i\theta)=\frac{\big(-\mathcal L (\bar\Pi+q)(\lambda+i\theta)\big)^N}{(\lambda+i\theta)\delta^{N+1}\big(1+\frac{1}{\delta}\mathcal L (\bar\Pi+q)(\lambda+i\theta)\big)}
		\end{align*}
		is a well defined absolutely integrable function in $\theta$ and
		\begin{align}\label{invers}
			u^{(q)}(x)&=\sum_{n=0}^{N-1}\frac{(-1)^n}{\delta^{n+1}}\1\ast\big(\bar\Pi+q\big)^{\ast n}(x)+e^{\lambda x}\frac{1}{2\pi }\int_{-\infty}^{\infty}e^{i\theta x}g^{N,q}(\lambda+i\theta)\,d\theta.
		\end{align}
	\end{proposition}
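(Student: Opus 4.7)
The plan is to start from the series representation in Proposition \ref{prop1}, split off its first $N$ terms, and identify the resulting tail
\begin{equation*}
	R^{N,q}(x) := u^{(q)}(x) - \sum_{n=0}^{N-1}\frac{(-1)^n}{\delta^{n+1}}\bigl(\mathbf 1 \ast (\bar\Pi+q)^{\ast n}\bigr)(x)
	= \sum_{n=N}^{\infty}\frac{(-1)^n}{\delta^{n+1}}\bigl(\mathbf 1 \ast (\bar\Pi+q)^{\ast n}\bigr)(x)
\end{equation*}
with the complex integral on the right-hand side of \eqref{invers}. First I would verify that $g^{N,q}$ is the Laplace transform of $R^{N,q}$ for $\mathrm{Re}\,s>0$. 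Using $\mathcal{L}u^{(q)}(s)=1/(\psi(s)+q)$ from \eqref{LaplaceUq} and the convolution rule $\mathcal{L}(\mathbf 1\ast(\bar\Pi+q)^{\ast n})(s)=s^{-1}\mathcal{L}(\bar\Pi+q)(s)^n$, the algebraic identity $\delta s\bigl(1+\delta^{-1}\mathcal{L}(\bar\Pi+q)(s)\bigr)=\psi(s)+q$ together with the finite geometric sum $\sum_{n=0}^{N-1}(-a)^n=(1-(-a)^N)/(1+a)$, applied with $a=\delta^{-1}\mathcal{L}(\bar\Pi+q)(s)$, rearranges to $\mathcal{L}R^{N,q}(s)=g^{N,q}(s)$, and simultaneously gives the useful equivalent form $g^{N,q}(s)=(-1)^N\mathcal{L}(\bar\Pi+q)(s)^N/\bigl(\delta^N(\psi(s)+q)\bigr)$.

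The hard part, and the only place where Assumption \textbf{(A)} is used, is showing that $g^{N,q}(\lambda+i\theta)$ is absolutely integrable in $\theta$. Since $\mathrm{Re}(\psi(\lambda+i\theta)+q)\ge\delta\lambda+q>0$, the denominator is bounded away from zero and $g^{N,q}$ is continuous and bounded on compact subsets of the line. For the decay as $|\theta|\to\infty$ I would pick $\gamma\in(\beta(\Pi),1)$ and split at $|s|^{-1}$:
\begin{equation*}
	|\psi(s)-\delta s|\le\int_0^{\infty}|1-e^{-sx}|\,\Pi(dx)\le|s|\!\int_0^{1/|s|}\!\!x\,\Pi(dx)+2\!\int_{1/|s|}^{\infty}\!\Pi(dx).
\end{equation*}
The elementary bounds $x\le x^{\gamma}|s|^{\gamma-1}$ on $\{x\le 1/|s|\}$ and $1\le x^{\gamma}|s|^{\gamma}$ on $\{x\ge 1/|s|\}$, combined with $\int_0^1 x^{\gamma}\Pi(dx)<\infty$ from Assumption \textbf{(A)} and $\int_1^{\infty}\Pi(dx)<\infty$ from \eqref{c}, show that each piece is $O(|s|^{\gamma})$. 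Hence $|\mathcal{L}(\bar\Pi+q)(\lambda+i\theta)|=O(|\theta|^{\gamma-1})$ whereas $|\psi(\lambda+i\theta)+q|\sim\delta|\theta|$, so the equivalent form of $g^{N,q}$ gives $|g^{N,q}(\lambda+i\theta)|=O(|\theta|^{N(\gamma-1)-1})$, which is integrable at infinity for every $N\ge 1$ since $\gamma<1$.

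Finally, since $e^{-\lambda x}R^{N,q}(x)\mathbf 1_{\{x>0\}}$ belongs to $L^1(\mathbb{R})$ (its integral is majorised by $\mathcal{L}u^{(q)}(\lambda)+\sum_{n=0}^{N-1}\delta^{-(n+1)}\lambda^{-1}\mathcal{L}(\bar\Pi+q)(\lambda)^n<\infty$, using nonnegativity of the individual summands), its Fourier transform equals $g^{N,q}(\lambda+i\theta)$ and is in $L^1(\mathbb{R})$ by the previous step, so the Fourier inversion theorem applies and gives
\begin{equation*}
	e^{-\lambda x}R^{N,q}(x)=\frac{1}{2\pi}\int_{-\infty}^{\infty}e^{i\theta x}g^{N,q}(\lambda+i\theta)\,d\theta
\end{equation*}
pointwise at every continuity point, i.e.\ everywhere, since $R^{N,q}$ inherits continuity from $u^{(q)}$ and from the convolutions $\mathbf 1\ast(\bar\Pi+q)^{\ast n}$. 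Rearranging yields precisely \eqref{invers}. The principal obstacle is the decay estimate on $\mathcal{L}\bar\Pi$ along the vertical line $\mathrm{Re}\,s=\lambda$; everything else is essentially algebra and a standard invocation of Fourier inversion.
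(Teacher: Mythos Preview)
Your proof is correct and reaches the same conclusion as the paper, but by a noticeably more streamlined route. The paper proceeds indirectly: it sets up a renewal equation for the tail $\phi^{N,q}$, transforms it for $\lambda$ large (Lemma~\ref{l33}), proves the denominator $1+\tfrac{1}{\delta}\mathcal L(\bar\Pi+q)$ is nonzero via an oscillatory-integral argument (Lemma~\ref{l11}), bounds $|\mathcal L\bar\Pi|$ by splitting sine and cosine integrals over periods (Lemma~\ref{l43}), and finally shifts the contour from $\mathrm{Re}\,s=\lambda_0$ down to $\mathrm{Re}\,s=\lambda$ using Cauchy's theorem. You bypass all of this. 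Writing the denominator as $\delta^N(\psi(s)+q)$ lets you use the one-line estimate $\mathrm{Re}(\psi(\lambda+i\theta)+q)\ge\delta\lambda>0$ in place of Lemma~\ref{l11}; your decay bound $|\psi(s)-\delta s|=O(|s|^{\gamma})$ via the split at $x=|s|^{-1}$ and the moment $\int_0^1 x^{\gamma}\Pi(dx)<\infty$ replaces Lemma~\ref{l43}; and because you get the Laplace identity directly from $\mathcal L u^{(q)}=1/(\psi+q)$ and a finite geometric sum, the inversion is valid immediately for every $\lambda>0$ with no contour shift needed.

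What the paper's longer argument buys is the finer information in Remark~\ref{remark}: its analysis of the zeros of $1+\tfrac{1}{\delta}\mathcal L\bar\Pi$ extends to $\lambda=0$ under the moment condition $\E[X_1]<\infty$, and this is exactly what is exploited later in the proof of Theorem~\ref{t8} when $\lambda\to 0$. Your real-part lower bound degenerates there. So for Proposition~\ref{laplaceinversionu} alone your approach is both sufficient and cleaner, but if you also intend to recover the asymptotics of $u'$ at infinity you will eventually need the sharper control on the imaginary axis that the paper develops.
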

	There is a good reason to not only consider the simplest case $N=1$. Larger $N$ makes the integrand decay faster which is needed to derive higher order differentiability via interchanging differentiation and integration. For $N>1$ it is still easy to work with representation (\ref{invers}) as the finite sum can be tackled termwise and for the integral classical convergence results can be applied.\medskip
	
	As a first application of Proposition \ref{laplaceinversionu} we derive a representation for the first derivative of the potential densities. In contrast to Proposition \ref{laplaceinversionu} we are not free to choose $N=1$; the minimal $N$ now depends on the Blumenthal-Getoor index.
			
	\begin{corollary}\label{t2}
		The right and left derivatives of $u^{(q)}$ exist and are given by
			\begin{align}
			\big({u^{(q)}}\big)'(x+)&=-\frac{\bar\Pi(x+)+q}{\delta^2}+\sum_{n=2}^{\infty}\frac{(-1)^n}{\delta^{n+1}} \big(\bar\Pi+q\big)^{\ast n}(x),\label{a}\\
			 \big({u^{(q)}}\big)'(x-)&=-\frac{\bar\Pi(x-)+q}{\delta^2}+\sum_{n=2}^{\infty}\frac{(-1)^n}{\delta^{n+1}}\big(\bar\Pi+q\big)^{\ast n}(x).\label{b}
		\end{align}
		Moreover, under Assumption \textbf{(A)} the following representations hold for $N>-\frac{1}{\beta(\Pi)-1}$ and $\lambda>0$:
		\begin{align}
			 \big({u^{(q)}}\big)'(x+)&=-\frac{\bar\Pi(x+)+q}{\delta^2}+\sum_{n=2}^{N-1}\frac{(-1)^n}{\delta^{n+1}}\big(\bar\Pi+q\big)^{\ast n}(x)+e^{\lambda x}\frac{1}{2\pi}\int_{-\infty}^{\infty}e^{i\theta x}h^{N,q}(\lambda+i\theta)\,d\theta,\label{0}\\
			 \big({u^{(q)}}\big)'(x-)&=-\frac{\bar\Pi(x-)+q}{\delta^2}+\sum_{n=2}^{N-1}\frac{(-1)^n}{\delta^{n+1}}\big(\bar\Pi+q\big)^{\ast n}(x)+e^{\lambda x}\frac{1}{2\pi}\int_{-\infty}^{\infty}e^{i\theta x}h^{N,q}(\lambda+i\theta)\,d\theta.\label{00}
		\end{align}	
		For every $\lambda>0$ the function
		\begin{align*}
			h^{N,q}(\lambda+i\theta)=\frac{\big(-\mathcal L (\bar\Pi+q)(\lambda+i\theta)\big)^N}{\delta^{N+1}\big(1+\frac{1}{\delta}\mathcal L (\bar\Pi+q)(\lambda+i\theta)\big)}
		\end{align*}
		is absolutely integrable in $\theta$.
	\end{corollary}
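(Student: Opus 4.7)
The plan is to derive both representations by differentiating the identities for $u^{(q)}$ from Proposition \ref{prop1} and Proposition \ref{laplaceinversionu}. The series identities \eqref{a}, \eqref{b} should follow from term-by-term differentiation of \eqref{series}; the Laplace inversion identities \eqref{0}, \eqref{00} should follow from differentiating \eqref{invers} under the integral sign. The bulk of the work is checking absolute integrability of $h^{N,q}$, which simultaneously validates the differentiation under the integral.

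For the series part, I would exploit that in \eqref{series} the $n=0$ term is the constant $1/\delta$ with zero derivative, the $n=1$ term is $-\delta^{-2}\int_{0}^{x}(\bar\Pi(y)+q)\,dy$ whose one-sided derivatives at $x$ equal $-\delta^{-2}(\bar\Pi(x\pm)+q)$ and account for the jumps of $\bar\Pi$, while for $n\geq 2$ the function $(\bar\Pi+q)^{\ast n}$ is continuous and one has $\frac{d}{dx}\1\ast(\bar\Pi+q)^{\ast n}(x)=(\bar\Pi+q)^{\ast n}(x)$. Interchange of derivative and infinite sum is justified by the same locally summable majorant for $(\bar\Pi+q)^{\ast n}$ that governs the absolute convergence of \eqref{series}. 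The only asymmetry between left and right derivatives comes from the $n=1$ term, which yields exactly the discrepancy between \eqref{a} and \eqref{b}.

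For the Laplace inversion identities, I would start from \eqref{invers}, treat the finite sum termwise as above, and differentiate under the integral sign in the remaining term:
\begin{align*}
\frac{d}{dx}\Bigl(e^{\lambda x}\frac{1}{2\pi}\int_{-\infty}^{\infty}e^{i\theta x}g^{N,q}(\lambda+i\theta)\,d\theta\Bigr)
=e^{\lambda x}\frac{1}{2\pi}\int_{-\infty}^{\infty}e^{i\theta x}h^{N,q}(\lambda+i\theta)\,d\theta,
\end{align*}
where the algebraic identity $(\lambda+i\theta)g^{N,q}(\lambda+i\theta)=h^{N,q}(\lambda+i\theta)$ is immediate from the defining formulas. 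Both the differentiation under the integral and the continuity of the integral term in $x$ follow once $h^{N,q}(\lambda+i\theta)$ is shown to be absolutely integrable in $\theta$.

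The main obstacle is therefore this integrability. Using $\mathcal L\bar\Pi(s)=(\psi(s)-\delta s)/s$, which follows from \eqref{LaplaceUq} and Fubini, together with $\mathcal L q(s)=q/s$, the denominator of $h^{N,q}$ rewrites as $\delta^{N}(\psi(\lambda+i\theta)+q)/(\lambda+i\theta)$. This is continuous, nonvanishing because $\mathrm{Re}(\psi(\lambda+i\theta)+q)\geq\delta\lambda+q>0$, and the original factor $1+\delta^{-1}\mathcal L(\bar\Pi+q)(\lambda+i\theta)$ tends to $1$ as $|\theta|\to\infty$, so the denominator is uniformly bounded away from zero. Integrability is thus controlled by the decay of the numerator $|\mathcal L(\bar\Pi+q)(\lambda+i\theta)|^{N}$. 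Under Assumption \textbf{(A)}, splitting the integral defining $\mathcal L\bar\Pi(\lambda+i\theta)$ at $x=|\theta|^{-1}$ and using $\int_{0}^{1}x^{\gamma}\Pi(dx)<\infty$ for every $\gamma>\beta(\Pi)$ will yield, for every $\epsilon>0$, the estimate $|\mathcal L(\bar\Pi+q)(\lambda+i\theta)|=O(|\theta|^{\beta(\Pi)-1+\epsilon})$ as $|\theta|\to\infty$. Consequently $|h^{N,q}(\lambda+i\theta)|=O(|\theta|^{N(\beta(\Pi)-1)+N\epsilon})$, which lies in $L^{1}(d\theta)$ precisely when $N(\beta(\Pi)-1)<-1$, that is, $N>-1/(\beta(\Pi)-1)$. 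This asymptotic analysis of $\mathcal L\bar\Pi$, the point at which the Blumenthal--Getoor index enters, is the crux of the argument.
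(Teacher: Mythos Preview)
Your plan is correct and tracks the paper closely. For the Laplace inversion identities \eqref{0}, \eqref{00} the paper does exactly what you propose: differentiate \eqref{invers}, use $(\lambda+i\theta)g^{N,q}=h^{N,q}$, and justify the interchange by an integrable bound on $h^{N,q}$ coming from the estimate $|\mathcal L\bar\Pi(\lambda+i\theta)|\leq C|\theta|^{\beta(\Pi)+\epsilon-1}$ for $|\theta|>1$ (Lemma~\ref{l43}) together with the denominator being bounded away from zero. Your denominator argument via $\mathrm{Re}(\psi(\lambda+i\theta)+q)\geq\delta\lambda>0$ is neater than the paper's Lemma~\ref{l11}, which proceeds by contradiction using the sign of $\int e^{-\lambda x}\sin(\theta x)\bar\Pi(x)\,dx$ over periods; both yield the same conclusion.

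The one genuine divergence is how the series identities \eqref{a}, \eqref{b} are obtained. You propose termwise differentiation of \eqref{series}; the paper instead establishes \eqref{0}, \eqref{00} first and then lets $N\to\infty$, showing the Laplace integral tends to zero via the bound \eqref{he2} and dominated convergence. Your route is the one alluded to in Remark~\ref{Rem1} (the \cite{CKS10} argument), and it has the merit of not needing Assumption~\textbf{(A)}, which the paper's route does. One caveat: your sentence ``the same locally summable majorant for $(\bar\Pi+q)^{\ast n}$ that governs the absolute convergence of \eqref{series}'' is too quick. The proof of Proposition~\ref{prop1} bounds $\mathbf 1\ast(\bar\Pi+q)^{\ast n}$, not $(\bar\Pi+q)^{\ast n}$; since $\bar\Pi$ may blow up at zero, the latter need not be bounded by the former. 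You need a slightly adapted estimate---for instance, on a compact $[a,b]\subset(0,\infty)$ split $(\bar\Pi+q)^{\ast n}=(\bar\Pi+q)\ast(\bar\Pi+q)^{\ast(n-1)}$ at $x/2$ so that in each piece one factor is evaluated away from zero and hence bounded, and then feed in the bound \eqref{iter}. This is routine but not literally the same majorant.
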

	\begin{rem}\label{Rem1}
		We should note that a refinement of the approach of \cite{CKS10} for non-atomic L\'evy measures yields the series representation (\ref{a}), (\ref{b}) without assuming Blumenthal-Getoor index smaller than $1$. Indeed, the difference is that in our setting $\bar\Pi$ can have jumps and therefore is outside of the scope of \cite{CKS10}. However, the jumps do not affect the study of the uniform and absolute convergence of the series in \eqref{a} and \eqref{b}, and it can be carried out as in \cite{CKS10}.\\
		The arguments of \cite{CKS10} are not repeated here; we only prove the more usefull Laplace inversion representation under Assumption \textbf{(A)} and derive from this the series representation.
	\end{rem}

	\begin{rem}\label{Rem2}
		We do not know how to derive the asymptotics of $u'$ at infinity only from the series representation (\ref{a}), (\ref{b}). It seems that a representation of the type \eqref{0}, \eqref{00} is much more useful for studying the properties of $u'$ and therefore it is an interesting problem to get such a representation  without Assumption \textbf{(A)}.
	\end{rem}
	
	
\subsection{Higher Order (Non)Differentiability for the Potential Densities}\label{sec:differ}

	Having established that $u^{(q)}$ is of bounded variation in Corollary \ref{t1}, $u^{(q)}$ must be differentiable away from a Lebesgue null set. The null set can be identified as the set of atoms as a consequence of either of the 			 representations given in Corollary \ref{t2}.
 	\begin{corollary}\label{uu}
 		The potential densities $u^{(q)}$ are differentiable at $x$ if and only if $x$ is not an atom of $\Pi$. More precisely,
		\begin{align}
			\big({u^{(q)}}\big)'(x+)-\big({u^{(q)}}\big)'(x-)=\frac{\Pi(\{x\})}{\delta^2}.\label{cc}
		\end{align}
	\end{corollary}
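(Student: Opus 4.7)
The plan is to read off the jump from the explicit one-sided derivative formulas in Corollary~\ref{t2}. The key observation is that in the expressions (\ref{a}) and (\ref{b}) for $(u^{(q)})'(x+)$ and $(u^{(q)})'(x-)$, only the leading term $-(\bar\Pi(x\pm)+q)/\delta^2$ involves a one-sided value of $\bar\Pi$; the entire series $\sum_{n\geq 2}(-1)^n \delta^{-(n+1)}(\bar\Pi+q)^{\ast n}(x)$ is identical in the two formulas and depends continuously on $x$.

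Thus the first step will be to verify that $(\bar\Pi+q)^{\ast n}$ is continuous in $x$ for every $n\geq 2$. For $n=2$ one has
\[
(\bar\Pi+q)^{\ast 2}(x)=\int_0^x(\bar\Pi(x-y)+q)(\bar\Pi(y)+q)\,dy,
\]
and since $\bar\Pi\in L^1_{\mathrm{loc}}$ by condition (\ref{c}), absolute continuity of the Lebesgue integral together with translation-continuity in $L^1_{\mathrm{loc}}$ gives continuity in $x$. An easy induction, noting that $(\bar\Pi+q)^{\ast n}$ is then locally bounded for $n\geq 2$, extends this to all $n\geq 2$. Combined with the absolute and uniform convergence of the series in Corollary~\ref{t2}, this shows that $x\mapsto \sum_{n\geq 2}\tfrac{(-1)^n}{\delta^{n+1}}(\bar\Pi+q)^{\ast n}(x)$ is continuous, hence equal to its left and right limits at every point.

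Subtracting (\ref{b}) from (\ref{a}) then yields
\[
(u^{(q)})'(x+)-(u^{(q)})'(x-)=\frac{\bar\Pi(x-)-\bar\Pi(x+)}{\delta^{2}}.
\]
Since $\bar\Pi(y)=\Pi([y,\infty))$ is right-continuous with $\bar\Pi(x-)-\bar\Pi(x+)=\Pi(\{x\})$, this is exactly (\ref{cc}). The claimed equivalence follows immediately: $u^{(q)}$ is differentiable at $x$ iff its one-sided derivatives coincide, iff $\Pi(\{x\})=0$, iff $x\notin G$.

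The only non-routine point is the continuity of the convolution iterates, which is a short exercise in measure theory; everything else is algebra on the formulas already supplied by Corollary~\ref{t2}. No further input (neither Assumption~\textbf{(A)} nor the Fourier representation) is needed, because the series versions (\ref{a}), (\ref{b}) already contain all the information about the one-sided limits.
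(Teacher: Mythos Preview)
Your proof is correct and follows exactly the approach indicated in the paper: the corollary is an immediate consequence of subtracting the two one-sided derivative formulas (\ref{a}) and (\ref{b}) from Corollary~\ref{t2}, since the series for $n\geq 2$ are literally the same in both. One minor slip: with the paper's convention $\bar\Pi(y)=\Pi([y,\infty))$ the tail is \emph{left}-continuous, not right-continuous, but your jump identity $\bar\Pi(x-)-\bar\Pi(x+)=\Pi(\{x\})$ is correct regardless, so the argument stands.
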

	It is interesting to see that the derivative of $u^{(q)}$ only jumps upwards and how the size of the jumps is determined by the weight of the atoms and the drift. The reader might want to compare this with Figure \ref{fig:u}.\medskip
	
	\begin{center}\textbf{Unless explicitly mentioned, from now on we assume that Assumption \textbf{(A)} holds.}\end{center}\medskip
	
	The remaining part of this section deals with higher order differentiability properties of the $q$-potentials. The study focuses on subordinators whose L\'evy measure has atoms which accumulate at most at zero. To emphasise the effect of atomic L\'evy measure on differentiability, the  results are divided into three theorems for non-atomic L\'evy measure, purely atomic L\'evy measure, and L\'evy measure with atomic and absolutely continuous parts.\medskip
	
	The case of non-atomic L\'evy measure already appeared in \cite{CKS10}. For Blumenthal-Getoor index smaller than $1$ their results follow easily from the Laplace inversion representation. To present a complete picture we 	state and reprove the following result of \cite{CKS10} for smooth L\'evy measures.

	\begin{thm}\label{t12}
		If $\bar\Pi$ is everywhere infinitely differentiable, then $U^{(q)}$ is everywhere infinitely differentiable.
	\end{thm}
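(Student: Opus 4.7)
The plan is to use the Laplace inversion representation (\ref{invers}) of Proposition \ref{laplaceinversionu} and to prove that $u^{(q)}\in C^k((0,\infty))$ for every $k\geq 0$ by choosing the truncation parameter $N$ sufficiently large depending on $k$. Since $U^{(q)}(x)=\int_{0}^{x}u^{(q)}(y)\,dy$, this immediately yields $U^{(q)}\in C^\infty$ on $(0,\infty)$. I write
\[
u^{(q)}(x)=P_N(x)+I_N(x),\qquad P_N(x)=\sum_{n=0}^{N-1}\frac{(-1)^n}{\delta^{n+1}}\mathbf{1}\ast(\bar\Pi+q)^{\ast n}(x),
\]
with $I_N(x)=e^{\lambda x}\tfrac{1}{2\pi}\int_{-\infty}^{\infty}e^{i\theta x}g^{N,q}(\lambda+i\theta)\,d\theta$, and treat the two pieces separately.

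For $I_N$, formal differentiation in $x$ brings out factors $(\lambda+i\theta)^{j}$ with $j\leq k$, so dominated convergence requires $(\lambda+i\theta)^{k}g^{N,q}(\lambda+i\theta)\in L^1(d\theta)$. Under Assumption \textbf{(A)} a standard estimate yields $|\mathcal{L}(\bar\Pi+q)(\lambda+i\theta)|=O(|\theta|^{-(1-\beta(\Pi))})$ as $|\theta|\to\infty$, so $g^{N,q}(\lambda+i\theta)=O(|\theta|^{-N(1-\beta(\Pi))-1})$. Taking $N>(k+1)/(1-\beta(\Pi))$ delivers the integrability required, and differentiation under the integral sign then shows $I_N\in C^k$.

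For $P_N$ it remains to show that each $(\bar\Pi+q)^{\ast n}$ is $C^\infty$ on $(0,\infty)$ whenever $\bar\Pi$ is, despite the possible singularity of $\bar\Pi$ at $0$. Fix $x_0>0$; for $x$ in a small neighborhood of $x_0$ split
\[
(\bar\Pi\ast\bar\Pi)(x)=\int_{0}^{x_0/2}\bar\Pi(x-y)\bar\Pi(y)\,dy+\int_{0}^{x-x_0/2}\bar\Pi(z)\bar\Pi(x-z)\,dz,
\]
obtained from the original convolution and the substitution $z=x-y$ in the tail. In each piece the factor that depends on $x$ is evaluated on a compact subset of $(0,\infty)$ (where $\bar\Pi$ is $C^\infty$ by assumption), while the remaining factor is only integrated against and need only be locally integrable near $0$, which holds by (\ref{c}). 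Hence differentiation under the integral sign of arbitrary order is permitted, and the boundary term arising at $z=x-x_0/2$ is smooth in $x$. Induction on $n$ then gives $(\bar\Pi+q)^{\ast n}\in C^\infty((0,\infty))$, and integrating once yields the same for $\mathbf{1}\ast(\bar\Pi+q)^{\ast n}$.

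The main technical hurdle is the decay estimate $|\mathcal{L}(\bar\Pi+q)(\lambda+i\theta)|=O(|\theta|^{-(1-\beta(\Pi))})$, which essentially relies on Assumption \textbf{(A)} to control the small-jump behavior of $\Pi$; without it the integrand might not gain enough regularity to permit $k$-fold differentiation for arbitrary $k$. Once that estimate is in place, the smoothing-by-splitting argument for $P_N$ is routine, and combining the two parts gives $u^{(q)}\in C^k$ for every $k\geq 0$, as desired.
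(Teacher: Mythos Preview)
Your proposal is correct and follows essentially the same route as the paper: split $u^{(q)}$ via the Laplace inversion representation into a finite sum of convolutions plus the integral $I_N$, choose $N$ large (the paper's Lemma~\ref{l111} gives the threshold $N>k/(1-\beta(\Pi))$, matching your estimate) so that the decay of $\mathcal L(\bar\Pi+q)$ from Lemma~\ref{l43} makes $I_N\in C^k$, and handle the convolution terms by the midpoint splitting trick. Your splitting at a fixed $x_0/2$ is a minor variant of the paper's Lemma~\ref{lem:dif}, which splits at $x/2$; both arguments are the same in spirit and yield $(\bar\Pi+q)^{\ast n}\in C^\infty((0,\infty))$.
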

	
	More accurate effects of the atoms on differentiability are revealed in what follows. The explicit calculation of Example \ref{example} for $\Pi=\delta_1$ indeed suggests more interesting behavior for higher order derivatives. In that particular case, $u$ is $(n-1)$-times differentiable but not $n$-times differentiable at $n$. The critical points $n\in\N$ in the example have the property that they can be reached by precisely $n$ jumps of the size of the atom $1$.\\
	Denote by $G$ the atoms of the L\'evy measure $\Pi$. We say that $x>0$ can be reached by $n$ atomic jumps if $x=\sum_{i=1}^ng_i$ with $g_i\in G$. For $k\geq 1$ we define the sets
	\begin{align*}
		G_k=\Bigg\{x> 0:x=\sum_{i=1}^jg_i, g_i\in G, 1\leq j\leq k\Bigg\}
	\end{align*}
	of points that can be reached by at most $k$ atomic jumps. The next theorem shows how higher order differentiability is connected to the sets $G_k$. As it is formulated for purely atomic L\'evy measures it can be seen as the counterpart of Theorem \ref{t12}.	
	
	\begin{thm}\label{t4}
		If $\Pi$ is purely atomic with a possible accumulation point of atoms only at zero, then, for $k\geq 1$,
		\begin{align*}
			U^{(q)}\text{ is }(k+1)\text{-times differentiable at }x\qquad \Longleftrightarrow \qquad x\notin G_k.
		\end{align*}
		In particular, if $x$ cannot be reached by atomic jumps only, then $U^{(q)}$ is infinitely differentiable at $x$.
	\end{thm}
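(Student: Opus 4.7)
The plan is to use the Laplace inversion representation of Proposition \ref{laplaceinversionu},
\begin{align*}
u^{(q)}(x) = \sum_{n=0}^{N-1} \frac{(-1)^n}{\delta^{n+1}} \1\ast(\bar\Pi+q)^{\ast n}(x) + R_N(x),
\end{align*}
with $R_N(x)=e^{\lambda x}\tfrac{1}{2\pi}\int_{-\infty}^{\infty}e^{i\theta x}g^{N,q}(\lambda+i\theta)\,d\theta$. Fix $k\ge 1$ and choose $N=N(k)$ large enough that $R_N\in C^{k+1}(\R_+)$; then $(k+1)$-fold differentiability of $U^{(q)}$ at $x$ — equivalently $k$-fold differentiability of $u^{(q)}$ — is governed solely by the finite sum. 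Under Assumption \textbf{(A)}, the bound $\mathcal{L}(\bar\Pi+q)(\lambda+i\theta)=O(|\theta|^{-(1-\beta(\Pi))})$ implies $g^{N,q}(\lambda+i\theta)=O(|\theta|^{-N(1-\beta(\Pi))-1})$; picking $N$ with $N(1-\beta(\Pi))>k+1$ makes $\theta^{k+1}g^{N,q}$ integrable, and the $C^{k+1}$ regularity of $R_N$ follows by differentiation under the integral.

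For the sufficiency direction ($x\notin G_k$), each term $\1\ast(\bar\Pi+q)^{\ast n}$ in the finite sum must be shown to be $k$-times differentiable at $x$. Since $\bar\Pi+q\in L^1_{\mathrm{loc}}$, each such term is globally $(n-1)$-times continuously differentiable, so terms with $n\ge k+1$ are harmless. For $n\le k$ one uses $x\notin G_k\supseteq G_n$ together with the accumulation-at-zero property of $\Pi$: splitting $\Pi=\Pi_{>\varepsilon}+\Pi_{\le\varepsilon}$ with $\varepsilon$ small, the finite measure $\Pi_{>\varepsilon}$ generates a piecewise-polynomial contribution whose knots lie in a discrete subset of $G_n$ not containing $x$, while the convolutions involving $\bar\Pi_{\le\varepsilon}$ acquire arbitrarily high regularity as $\varepsilon\downarrow 0$ thanks to Assumption \textbf{(A)}. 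Consequently $\1\ast(\bar\Pi+q)^{\ast n}$ is $C^\infty$ on a neighborhood of $x$, completing sufficiency.

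For necessity, suppose $x\in G_k\setminus G_{k-1}$. The same local analysis shows terms with $n<k$ are $C^\infty$ at $x$ (because $x\notin G_{k-1}\supseteq G_n$) and terms with $n>k$ are globally $C^{n-1}\supseteq C^k$, so the only possible obstruction is the $n=k$ term. Its $k$-th distributional derivative coincides, up to smooth contributions, with $\frac{(-1)^k}{\delta^{k+1}}\cdot(-\Pi)^{\ast k}$, which carries at $x$ an atomic mass
\begin{align*}
\frac{1}{\delta^{k+1}}\sum_{(g_{i_1},\dots,g_{i_k}):\, g_{i_1}+\cdots+g_{i_k}=x}\Pi(\{g_{i_1}\})\cdots\Pi(\{g_{i_k}\})>0,
\end{align*}
strictly positive since $x\in G_k$. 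This jump cannot be cancelled by any of the smooth terms, so $u^{(q)}$ fails to be $k$-times differentiable at $x$. A general $x\in G_k$ reduces to this case by setting $k_0=\min\{j\ge 1:x\in G_j\}\le k$ and applying the result for $k_0$.

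The main technical obstacle lies in the local-smoothness claim for $\1\ast(\bar\Pi+q)^{\ast n}$ near $x\notin G_n$ when $\Pi$ has infinitely many atoms accumulating at zero, since partial sums of arbitrarily small atoms could in principle cluster near $x$. Assumption \textbf{(A)} is the decisive ingredient: with $\beta(\Pi)<1$, the cumulative effect of atoms below $\varepsilon$ decays like $\varepsilon^{1-\beta(\Pi)}$, so that an induction on the number of convolutions yields that the $\bar\Pi_{\le\varepsilon}$-contributions are $C^k$ near $x$ for $\varepsilon$ small, while the finitely many large atoms produce a classical spline with a discrete knot set isolating the chosen point $x$.
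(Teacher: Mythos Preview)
Your overall architecture matches the paper's: use the Laplace inversion representation with $N$ large enough that the remainder $R_N$ is $C^k$, then analyse the finite sum of convolutions. The paper carries out the latter via Lemma~\ref{l}, which establishes directly that $\bar\Pi^{\ast n}$ is a piecewise polynomial of degree at most $n-1$ with knot set $G_n$, is globally $C^{n-2}$, and has a genuine jump of its $(n-1)$st derivative precisely on $G_n\setminus G_{n-1}$; the case $q>0$ is then reduced to $q=0$ by a binomial expansion and Lemma~\ref{pre}. Your $\varepsilon$-splitting is a different route to the same end, but the justification you give contains a real gap.

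You claim that ``the convolutions involving $\bar\Pi_{\le\varepsilon}$ acquire arbitrarily high regularity as $\varepsilon\downarrow 0$ thanks to Assumption~\textbf{(A)}'' and that the decay $\varepsilon^{1-\beta(\Pi)}$ of the mass of small atoms is ``the decisive ingredient''. This is a misdiagnosis: an $L^1$ bound says nothing about smoothness, and in fact Assumption~\textbf{(A)} plays \emph{no role whatsoever} in the convolution analysis --- the paper's Lemma~\ref{l} never invokes it. What actually makes your splitting work is a pure \emph{support} argument. Since the atoms accumulate only at zero, one checks inductively that the accumulation points of $G_n$ lie in $G_{n-1}\cup\{0\}$, so any $x\notin G_n$ is at positive distance $d>0$ from $G_n$. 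With $\varepsilon<d/n$, the pure small-atom term $\bar\Pi_{\le\varepsilon}^{\ast n}$ is supported on $[0,n\varepsilon]$ and vanishes near $x$; in each mixed term $\bar\Pi_{>\varepsilon}^{\ast i}\ast\bar\Pi_{\le\varepsilon}^{\ast j}$ one integrates the finite-knot piecewise polynomial $\bar\Pi_{>\varepsilon}^{\ast i}(x-\cdot)$ over $[0,j\varepsilon]$, an interval on which it is a single polynomial because $[x-j\varepsilon,x]$ avoids the knot set $G_i^{>\varepsilon}\subseteq G_n$. This yields $C^\infty$ near $x$ with no appeal to the Blumenthal--Getoor index. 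Assumption~\textbf{(A)} is used only to make $R_N$ smooth.

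A second gap: you assert that ``since $\bar\Pi+q\in L^1_{\mathrm{loc}}$, each term $\1\ast(\bar\Pi+q)^{\ast n}$ is globally $(n-1)$-times continuously differentiable''. Local integrability alone does not give this; for the purely atomic case it is precisely the nontrivial content of Lemma~\ref{l}(b), whose proof is an induction that carefully justifies interchanging differentiation with the infinite sum $\bar\Pi=\sum_{a\in G}\Pi(\{a\})\1_{[0,a]}$. You rely on this global $C^{n-2}$ regularity of $(\bar\Pi+q)^{\ast n}$ to discard the terms with $n>k$ in the necessity direction, so it cannot be taken for granted.
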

	
	The following example shows that we can easily find examples with exotic differentiability properties.
	\begin{example}
		If $\Pi$ has atoms on $1/k$ for $k\geq 1$, then $U^{(q)}$ is infinitely differentiable at $x$ if and only if $x\notin \mathbb{Q}$. For $x=n/k$, $U^{(q)}$ is at most $n$-times differentiable at $x$.
	\end{example}
	Finally, we show how Theorems \ref{t12} and \ref{t4} combine each other: the atoms prevent $U^{(q)}$ from being twice differentiable and an additional absolutely continuous part does not change the behavior.
	\begin{thm}\label{t9}
		If $\bar \Pi(x)=\bar \Pi_1(x)+\bar\Pi_2(x)$, where $\Pi_1$ is purely atomic with possible accumulation point of atoms only at zero and $\bar\Pi_2$ is infinitely differentiable, then, for $k\geq 1$,
		\begin{align*}
			U^{(q)}\text{ is }(k+1)\text{-times differentiable at }x\qquad \Longleftrightarrow \qquad x\notin G_k.
		\end{align*}
		In particular, if $x$ cannot be reached by atomic jumps only, then $U^{(q)}$ is infinitely differentiable at $x$.
	\end{thm}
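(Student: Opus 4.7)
My plan is to reduce Theorem \ref{t9} to Theorem \ref{t4} by comparing $u^{(q)}$ with the $q$-potential density $u^{(q)}_{\Pi_1}$ of the subordinator with drift $\delta$ and L\'evy measure $\Pi_1$ alone. Since $\beta(\Pi_1)\le\beta(\Pi)<1$ this auxiliary subordinator also satisfies Assumption \textbf{(A)}, and because $\bar\Pi_2$ carries no atoms the set $G_k$ is the same whether built from $\Pi$ or from $\Pi_1$. By Theorem \ref{t4}, $u^{(q)}_{\Pi_1}$ is $k$-times differentiable at $x$ iff $x\notin G_k$.

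Apply the Laplace inversion representation \eqref{invers} to both $u^{(q)}$ and $u^{(q)}_{\Pi_1}$ with common parameters $\lambda>0$ and $N$ chosen large enough (depending on $k$ and $\beta(\Pi)$) that $(i\theta)^{k+1}g^{N,q}(\lambda+i\theta)$ is absolutely integrable, so that the corresponding Fourier integrals are $(k+1)$-times continuously differentiable in $x$ by differentiation under the integral sign. Expanding via the convolution binomial,
\begin{align*}
(\bar\Pi+q)^{\ast n}-(\bar\Pi_1+q)^{\ast n}=\sum_{j=0}^{n-1}\binom{n}{j}(\bar\Pi_1+q)^{\ast j}\ast\bar\Pi_2^{\ast(n-j)},
\end{align*}
the difference $u^{(q)}(x)-u^{(q)}_{\Pi_1}(x)$ equals a finite sum of mixed convolutions $\mathbf{1}\ast(\bar\Pi_1+q)^{\ast j}\ast\bar\Pi_2^{\ast(n-j)}$ with $n-j\ge 1$, plus a $C^{k+1}$ Fourier remainder.

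The technical core is the claim that each mixed term is $C^\infty$ at any $x\notin G_j$ and $(n-1)$-times continuously differentiable at $x\in G_j$. This is established by transferring derivatives onto the $C^\infty$ factor $\bar\Pi_2^{\ast(n-j)}$ via iterated integration by parts; the derivatives of $\bar\Pi_2^{\ast(n-j)}$ of orders $0,1,\ldots,n-j-2$ all vanish at the origin, so boundary terms involving the non-smooth factor $(\bar\Pi_1+q)^{\ast j}$ first appear only at the $n$-th derivative of the product. Consequently, for $x\in G_k$ let $k_0\le k$ be minimal with $x\in G_{k_0}$: the mixed terms with $j\ge k_0$ satisfy $n\ge j+1\ge k_0+1$ and therefore lie in $C^{n-1}\subseteq C^{k_0}$ at $x$, while those with $j<k_0$ satisfy $x\notin G_j$ and are $C^\infty$ at $x$. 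The analogous argument at $x\notin G_k$ yields $(k+1)$-times continuous differentiability of each mixed term.

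Combining with Theorem \ref{t4}: for $x\in G_k$, $u^{(q)}_{\Pi_1}$ fails to be $k_0$-times differentiable at $x$ while the difference $u^{(q)}-u^{(q)}_{\Pi_1}$ is $k_0$-times continuously differentiable there, hence $u^{(q)}$ is not $k_0$-times (and a fortiori not $k$-times) differentiable at $x$; for $x\notin G_k$, $u^{(q)}_{\Pi_1}$ is $k$-times differentiable and the difference is $(k+1)$-times differentiable, so $u^{(q)}$ is $k$-times differentiable. Integrating once gives the statement for $U^{(q)}$. The principal obstacle is the smoothness bookkeeping for the mixed convolutions at points in $G_j$, which requires careful tracking of vanishing orders at the origin and boundary contributions under iterated differentiation --- a direct if somewhat tedious calculation.
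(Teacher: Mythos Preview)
Your overall strategy coincides with the paper's: use the Laplace inversion representation \eqref{invers} with $N$ large enough that the Fourier remainder is $C^k$, expand the finite sum via the convolution binomial, and show that all mixed terms involving at least one factor $\bar\Pi_2$ are smooth enough not to interfere with the (non)differentiability pattern produced by the pure terms $\bar\Pi_1^{\ast n}$ (which is governed by Theorem~\ref{t4}). Your reorganisation through the auxiliary density $u^{(q)}_{\Pi_1}$ is a cosmetic difference; after unpacking, the mixed terms you must control are the same ones the paper faces.

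The gap is in your justification of the mixed-term smoothness. You claim that integration by parts works because ``the derivatives of $\bar\Pi_2^{\ast(n-j)}$ of orders $0,1,\ldots,n-j-2$ all vanish at the origin''. This is false in general: the hypotheses allow $\bar\Pi_2$ to blow up at $0$ (it is only assumed $C^\infty$ on $(0,\infty)$ and, as a L\'evy tail, locally integrable at $0$). For instance if $\bar\Pi_2(y)\sim y^{-\alpha}$ with $\alpha\in(1/2,1)$ then $\bar\Pi_2^{\ast 2}(x)\sim C\,x^{1-2\alpha}\to\infty$ as $x\downarrow 0$, so already the zeroth-order boundary term does not vanish. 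In this regime you cannot transfer derivatives onto $\bar\Pi_2^{\ast(n-j)}$ by integration by parts, and the argument collapses. The same obstruction arises on the other side: $(\bar\Pi_1+q)^{\ast j}$ and its low-order antiderivatives need not vanish at $0$ either when the atoms of $\Pi_1$ accumulate there.

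The paper avoids this by never integrating by parts. It exploits instead the step-function structure $\bar\Pi_1=\sum_{a\in G}\Pi_1(\{a\})\mathbf{1}_{[0,a]}$ through Lemma~\ref{pre}: for any $f$ that is $C^\infty$ away from $G_i$ with locally bounded derivatives and integrable at zero, $\bar\Pi_1\ast f$ is $C^\infty$ away from $G_{i+1}$, because each piece $\mathbf{1}_{[0,a]}\ast f(x)=\int_{(x-a)\vee 0}^{x}f(y)\,dy$ differentiates to values of $f$ itself, with no boundary term involving $f(0+)$. Iterating this lemma gives that $\bar\Pi_1^{\ast k}\ast(\bar\Pi_2+q)^{\ast(n-k)}$ is $C^\infty$ away from $G_k$, which (since $k\le n-1$) is exactly what is needed to show that the mixed terms contribute no new obstructions beyond those already present from the pure terms. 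Your proof can be repaired by replacing the integration-by-parts step with this lemma; the rest of your bookkeeping then goes through.
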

	\begin{rem}\label{Rem3}
	In fact Theorem \ref{t9} is still true if $\bar\Pi_2$ possesses only finitely many derivatives. Then the statement will be valid, for any $k$, such that $\bar\Pi_2$ is $k$-times differentiable.
	\end{rem}
	\begin{rem}
		Recalling the close connection of $u$ and creeping probabilities given in (\ref{creep}), it would be interesting to have a good probabilistic interpretation of non-existence of derivatives of certain order for the creeping probabilities.
	\end{rem}


\subsection{Asymptotic Properties of Potential Densities}\label{sub2}
	Asymptotic properties at zero and infinity of $U$ are classical results in potential theory of subordinators (see for instance Proposition 1 of Chapter 3 in \cite{B96}). Also the limiting behavior of the renewal density $u$ at zero and infinity are known (see \eqref{d} and  \eqref{dd}).
	
	In what follows we aim at giving more refined convergence properties based on the representations for $u^{(q)}$ and $(u^{(q)})'$. Some results will be only valid for $q=0$. Utilizing the series representation of Section \ref{sec:representations}, we first strengthen the asymptotic at zero. In the following $f \sim g$ denotes strong asymptotic equivalence, i.e. $\lim \frac{f}{g}=1$.
	\begin{thm}\label{p2}
		For general subordinator with positive drift and $n\geq 0$, the following strong asymptotic equivalence holds:
		\begin{align*}
			\Bigg|\sum_{k=0}^{{n}}\frac{(-1)^k}{\delta^{k+1}}\1\ast (q+\bar\Pi)^{\ast k}(x)-u^{(q)}(x)\Bigg|\sim \frac{(-1)^{n}}{\delta^{n+2}}\1\ast (q+\bar\Pi)^{\ast (n+1)}(x), \quad\text{as $x\rightarrow0$}.
		\end{align*}
	\end{thm}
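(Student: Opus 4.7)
The plan is to deduce the asymptotic directly from the series representation \eqref{series} of Proposition \ref{prop1}. Writing $F_k(x):=\1\ast (q+\bar\Pi)^{\ast k}(x)$, subtracting the partial sum and isolating the leading remainder term yields
\begin{align*}
	\sum_{k=0}^{n}\frac{(-1)^k}{\delta^{k+1}}F_k(x)-u^{(q)}(x)=\frac{(-1)^{n}}{\delta^{n+2}}F_{n+1}(x)-\sum_{k=n+2}^{\infty}\frac{(-1)^k}{\delta^{k+1}}F_k(x),
\end{align*}
so the task reduces to proving that the tail series is $o\!\big(F_{n+1}(x)/\delta^{n+2}\big)$ as $x\to 0^+$; taking absolute values will then give the stated equivalence (the sign $(-1)^n$ in the statement simply records the sign of the leading term).

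The core ingredient is the pointwise bound
\begin{align*}
	F_{k+1}(x)\leq F_1(x)\,F_k(x),\qquad k\geq 0,\ x\geq 0.
\end{align*}
To see it, set $f=q+\bar\Pi$ and write $F_{k+1}(x)=\int_0^x\int_0^y f(y-z)f^{\ast k}(z)\,dz\,dy$. Tonelli's theorem applies since $f$ is non-negative and locally integrable on $[0,\infty)$ by \eqref{c}, yielding $F_{k+1}(x)=\int_0^x f^{\ast k}(z)\,F_1(x-z)\,dz$, and the monotonicity $F_1(x-z)\leq F_1(x)$ does the rest.

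Iterating the bound gives $F_{n+1+j}(x)\leq F_1(x)^{j}F_{n+1}(x)$ for every $j\geq 1$. Since $F_1(x)=qx+\int_0^x\bar\Pi(y)\,dy\to 0$ as $x\to 0^+$, for $x$ sufficiently small $F_1(x)/\delta<1$ and the tail can be bounded by a geometric series:
\begin{align*}
	\sum_{k=n+2}^{\infty}\frac{F_k(x)}{\delta^{k+1}}\leq \frac{F_{n+1}(x)}{\delta^{n+2}}\sum_{j=1}^{\infty}\Bigl(\frac{F_1(x)}{\delta}\Bigr)^{j}=\frac{F_{n+1}(x)}{\delta^{n+2}}\cdot\frac{F_1(x)/\delta}{1-F_1(x)/\delta}.
\end{align*}
The right-hand factor vanishes as $x\to 0$, which is exactly the desired control on the tail. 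The only delicate point is that $\bar\Pi$ may blow up at the origin, but this is precisely handled by working with the integrated quantities $F_k$ and by invoking Tonelli on non-negative integrands; in particular no extra regularity of $\Pi$ and no restriction on the Blumenthal--Getoor index is required, in agreement with the theorem being stated for general subordinators with positive drift.
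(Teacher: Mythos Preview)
Your proof is correct and follows essentially the same route as the paper: subtract the partial sum from the series representation of Proposition~\ref{prop1}, control the tail via the recursive inequality $F_{k+1}(x)\le F_1(x)F_k(x)$ (the paper's \eqref{iter}), and sum the resulting geometric series using $F_1(x)\to 0$. The only difference is cosmetic: the paper spells out the case $n=0$ and then asserts that the higher-order cases follow in the same way, whereas you carry out the general $n$ directly and give the Tonelli justification of the key bound in full.
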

	The first order asymptotic of (\ref{d}) appears in the theorem for $n=0$ and reveals more precise qualitative information:	
	
	\begin{corollary}\label{c1}
	The potential density $u^{(q)}$ is asymptotically linear with slope $-(\Pi(\R)+q)/\delta^2$ at zero iff the L\'evy measure is finite. If the L\'evy measure is infinite, then $u^{(q)}$ decays faster than linearly.
	\end{corollary}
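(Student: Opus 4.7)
The corollary is the $n=0$ instance of Theorem \ref{p2} made explicit. The plan is to substitute $n=0$ into that theorem, remove the absolute value by a cheap probabilistic bound, and then split into two cases according to whether $\bar\Pi(0+)$ is finite.

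First, since $\1\ast(q+\bar\Pi)^{\ast 0}(x)=1$ and $\1\ast(q+\bar\Pi)(x)=qx+\int_0^x\bar\Pi(y)\,dy$, Theorem \ref{p2} with $n=0$ reads
$$\Bigl|\tfrac{1}{\delta}-u^{(q)}(x)\Bigr|\ \sim\ \frac{1}{\delta^2}\Bigl(qx+\int_0^x\bar\Pi(y)\,dy\Bigr),\qquad x\downarrow 0.$$
To drop the absolute value I would invoke $u^{(q)}\leq u$ (which follows from $e^{-qt}\leq 1$) together with the creeping representation \eqref{creep}; this forces $u(x)\leq 1/\delta$ and hence $\tfrac{1}{\delta}-u^{(q)}(x)\geq 0$ in a neighbourhood of zero.

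Next I would dichotomise according to $\bar\Pi(0+)$. If $\Pi(\R)<\infty$, then $\bar\Pi$ is right-continuous at $0$ with $\bar\Pi(0+)=\Pi(\R)$, so $x^{-1}\int_0^x\bar\Pi(y)\,dy\to\Pi(\R)$ and dividing by $x$ yields
$$\frac{1/\delta-u^{(q)}(x)}{x}\ \longrightarrow\ \frac{q+\Pi(\R)}{\delta^2},$$
which is exactly the asserted asymptotic linearity with slope $-(q+\Pi(\R))/\delta^2$. If $\Pi(\R)=\infty$, then monotonicity of $\bar\Pi$ forces $\bar\Pi(0+)=\infty$, so for every $M>0$ the quotient $x^{-1}\int_0^x\bar\Pi(y)\,dy$ eventually exceeds $M$; letting $M\to\infty$ gives $(1/\delta-u^{(q)}(x))/x\to\infty$, which precludes any finite slope and thereby yields both the super-linear decay and the ``only if'' direction of the equivalence.

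The whole argument is essentially mechanical once Theorem \ref{p2} is granted; the only even mildly non-trivial step is fixing the sign of $\tfrac{1}{\delta}-u^{(q)}(x)$, and \eqref{creep} disposes of it in one line.
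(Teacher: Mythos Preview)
Your argument is correct and follows essentially the same route as the paper: both start from the $n=0$ case of Theorem \ref{p2} and then analyse the quotient $x^{-1}\bigl(qx+\int_0^x\bar\Pi(y)\,dy\bigr)$ according to whether $\bar\Pi(0+)$ is finite. The paper obtains the limit in the finite case by the two-sided squeeze $\bar\Pi(x)\leq x^{-1}\int_0^x\bar\Pi(y)\,dy\leq \bar\Pi(0)$ coming from monotonicity, whereas you invoke right-continuity of $\bar\Pi$ directly; these are equivalent. Your explicit treatment of the sign of $\tfrac{1}{\delta}-u^{(q)}(x)$ via $u^{(q)}\leq u\leq 1/\delta$ is a point the paper leaves implicit.
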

	A simple class of examples are stable subordinators with drift.
	\begin{example}
		If the L\'evy measure has stable law with $\alpha\in (0,1)$, the strong asymptotics at zero are given by
		\begin{align*}
			\frac{1}{\delta}-u(x) \sim \frac{1}{\delta^2}C\int_0^x\frac{1}{y^{\alpha}}\,dy=\frac{C}{\delta^2(1-\alpha)}x^{1-\alpha}.
		\end{align*}
	\end{example}	
	L\'evy measure putting mass precisely to one point provides an example of asymptotically linear behavior.
	\begin{example}
		If the L\'evy measure has only one atom at $1$ and $\delta=1$, then the direct calculation in Example \ref{example} or Corollary \ref{c1} both lead to the linear asymptotics
		\begin{align*}
			1-u(x)=1-e^{- x}\sim x,\quad\text{as }x\rightarrow 0.
		\end{align*}	
	\end{example}
	As for absolutely continuous L\'evy measures the existence of the derivative of the renewal density was established quite recently, the asymptotics of ${u}'$ seem to be unknown. On first view one might be tempted to differentiate the asymptotics of $u$ to derive the asymptotics of ${u}'$. Due to lack of knowledge on ultimate monotonicity of $u$ and ${u}'$ we cannot apply the monotone density theorem and instead work with the representations of Section \ref{sec:representations}.
	\begin{thm}\label{t7}
		Assume condition \textbf{(A)}, i.e. $\beta(\Pi)<1$. Let $\bar\Pi(0+)=\infty$, then there exists $n\geq 1$ such that $\beta(\Pi)\leq \frac{n}{n+1}$ and then
		\begin{align*}
			\big({u^{(q)}}\big)'(x+)&=-\frac{\bar\Pi(x+)}{\delta^2}+...+ \frac{(-1)^n\bar\Pi^{\ast n}(x)}{\delta^{n+1}}+o(1),\quad\text{ as }x\to 0,\\
			\big({u^{(q)}}\big)'(x-)&=-\frac{\bar\Pi(x-)}{\delta^2}+...+ \frac{(-1)^n\bar\Pi^{\ast n}(x)}{\delta^{n+1}}+o(1),\quad\text{ as }x\to 0.
		\end{align*}
If $\bar\Pi(0+)<\infty$ then, as $x\to 0$,
\[\big({u^{(q)}}\big)'(x+)=\frac{q+\bar\Pi(x+)}{\delta^2}+o(1),\]
\[{\big(u^{(q)}}\big)'(x-)=\frac{q+\bar\Pi(x-)}{\delta^2}+o(1).\]
	\end{thm}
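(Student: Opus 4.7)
The plan is to apply the Laplace--inversion representations \eqref{0}--\eqref{00} of Corollary \ref{t2} with $N$ chosen to satisfy both $N>1/(1-\beta(\Pi))$ (as required by the Corollary) and $N\geq n+1$, where $n$ is the integer from the theorem with $\beta(\Pi)\leq n/(n+1)$. This splits $(u^{(q)})'(x\pm)$ into three pieces: the leading term $-(\bar\Pi(x\pm)+q)/\delta^{2}$, the finite convolution sum $\sum_{k=2}^{N-1}\frac{(-1)^k}{\delta^{k+1}}(\bar\Pi+q)^{\ast k}(x)$, and the Laplace--inversion integral $e^{\lambda x}\frac{1}{2\pi}\int_{-\infty}^{\infty}e^{i\theta x}h^{N,q}(\lambda+i\theta)\,d\theta$. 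I would analyse each piece as $x\to 0^{+}$; the left-limit case is completely parallel.

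For the finite sum I would expand each $(\bar\Pi+q)^{\ast k}$ as $\bar\Pi^{\ast k}$ plus finitely many mixed terms of the form $\binom{k}{j}q^{k-j}\,\1^{\ast(k-j)}\ast\bar\Pi^{\ast j}(x)$ with $k-j\geq 1$. Local integrability of $\bar\Pi$ from \eqref{c} combined with $\1^{\ast m}(x)=x^{m-1}/(m-1)!$ forces every mixed term to vanish as $x\to 0$, leaving $\sum_{k=2}^{N-1}\frac{(-1)^k}{\delta^{k+1}}\bar\Pi^{\ast k}(x)+o(1)$. The contributions with $k\geq n+1$ also vanish: the standard consequence of the Blumenthal--Getoor index that $\bar\Pi(x)=O(x^{-\gamma})$ near $0$ for every $\gamma>\beta(\Pi)$ gives, by induction through a Beta-integral computation, the bound $\bar\Pi^{\ast k}(x)=O(x^{k(1-\gamma)-1})$, and the exponent $k(1-\gamma)-1$ can be made positive for each $k\geq n+1$ by choosing $\gamma$ sufficiently close to $\beta(\Pi)$, because $\beta(\Pi)\leq n/(n+1)$. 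Only the indices $2\leq k\leq n$ therefore contribute, producing the claimed expansion up to the integral remainder.

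The key remaining step is to show that the Laplace--inversion integral is $o(1)$. By absolute integrability of $h^{N,q}(\lambda+i\theta)$ and dominated convergence this reduces to proving $\frac{1}{2\pi}\int_{-\infty}^{\infty}h^{N,q}(\lambda+i\theta)\,d\theta=0$. Here I would use the rewriting $1+\mathcal{L}(\bar\Pi+q)(s)/\delta=(\psi(s)+q)/(\delta s)$, which makes $h^{N,q}(s)$ manifestly holomorphic in $\{\operatorname{Re} s>0\}$ since $\psi(s)+q\neq 0$ there. The large-$|s|$ asymptotics $|\mathcal{L}(\bar\Pi+q)(s)|=O(|s|^{\beta(\Pi)-1})$ give $|h^{N,q}(s)|=O(|s|^{N(\beta(\Pi)-1)})$, so on a right half-circle of radius $R$ the integral is $O(R^{1+N(\beta(\Pi)-1)})$, which vanishes as $R\to\infty$ precisely because $N>1/(1-\beta(\Pi))$. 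Closing the contour to the right and applying Cauchy's theorem then yields the desired vanishing.

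For the case $\bar\Pi(0+)<\infty$ the L\'evy measure is finite, $\beta(\Pi)=0$, and one may take $N=2$. All convolutions $\bar\Pi^{\ast k}(x)$ with $k\geq 2$ are $O(x)$ by local boundedness of $\bar\Pi$, so only the leading term survives modulo $o(1)$ (the sign printed in the theorem statement appears to be typographical, the formula from Corollary \ref{t2} producing the natural minus). The principal obstacle throughout is the contour-deformation step for the integral: both the holomorphy of $h^{N,q}$ on $\{\operatorname{Re} s>0\}$ and the semicircle decay estimate are delicate, and the latter shows that the hypothesis $N>1/(1-\beta(\Pi))$ is a tight requirement rather than a mere convenience.
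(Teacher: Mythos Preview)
Your proposal is correct, but the route differs from the paper's in one substantive respect. The paper does not use the Laplace--inversion representation \eqref{0}--\eqref{00} at all here: it works directly with the infinite series \eqref{a}--\eqref{b} and shows that the entire tail $\sum_{k\geq n+1}(-1)^{k}(q+\bar\Pi)^{\ast k}(x)/\delta^{k+1}$ tends to zero. For the first tail term $k=n+1$ it uses exactly the same Beta--integral estimate $\bar\Pi^{\ast k}(x)=O(x^{k(1-\gamma)-1})$ that you invoke, and then handles the remaining infinite tail by the elementary domination $(q+\bar\Pi)^{\ast(n+1+k)}(x)\leq\sup_{y\leq x}(q+\bar\Pi)^{\ast(n+1)}(y)\cdot\big(\mathbf 1\ast(q+\bar\Pi)(x)\big)^{k}$ and a geometric summation. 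Your alternative---packaging the deep tail into the integral $\frac{1}{2\pi}\int e^{i\theta x}h^{N,q}(\lambda+i\theta)\,d\theta$, reducing to its value at $x=0$ by dominated convergence, and then annihilating that value by deforming the contour to the right---is valid (holomorphy in $\{\operatorname{Re}s>0\}$ is Lemma~\ref{l11}, and the semicircle decay follows from the uniform bounds of Lemma~\ref{l43}), but it trades the paper's purely real, elementary tail estimate for a complex-analytic one. The paper's argument is shorter and avoids Assumption~\textbf{(A)} being invoked a second time through the integral, while your binomial expansion of $(\bar\Pi+q)^{\ast k}$ is actually more careful than the paper's one-line ``$q+\bar\Pi\sim\bar\Pi$'' in separating the $q$-contributions. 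Your remark about the sign in the finite-measure case is also correct: the displayed formula in the statement should carry a minus sign, as dictated by \eqref{a}--\eqref{b}.
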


	For a large class of processes this implies that the asymptotic of $({u}^{(q)})'$ are indeed given by the derivative of the asymptotic of $u^{(q)}$:
	\begin{corollary}
		If $\Pi$ has Blumenthal-Getoor index $\beta<1/2$, then $\big({u^{(q)}}\big)'(x+)\sim -\frac{q+\bar\Pi(x+)}{\delta^2}$ and $\big({u^{(q)}}\big)'(x-)\sim -\frac{(q+\bar\Pi)(x-)}{\delta^2}$ at zero.
	\end{corollary}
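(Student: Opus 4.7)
The plan is to derive the corollary as a direct consequence of Theorem \ref{t7}, the only substantive observation being that the hypothesis $\beta(\Pi)<1/2$ makes $n=1$ an admissible choice in the first part of that theorem.

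First I would dispose of the easy case $\bar\Pi(0+)<\infty$. Here the second displayed asymptotic of Theorem \ref{t7} (modulo what appears to be a missing minus sign in the paper) already reads $\big({u^{(q)}}\big)'(x\pm)=-\frac{q+\bar\Pi(x\pm)}{\delta^2}+o(1)$, and since the right-hand side has a finite nonzero limit $-\frac{q+\bar\Pi(0\pm)}{\delta^2}$ at $0$, the $o(1)$ term is negligible and strong asymptotic equivalence $\sim$ follows immediately.

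In the remaining case $\bar\Pi(0+)=\infty$, observe that $\beta(\Pi)<1/2$ entails $\beta(\Pi)\leq \tfrac{1}{2}=\tfrac{n}{n+1}$ with $n=1$, so the first part of Theorem \ref{t7} applies with $n=1$ and gives
\begin{equation*}
\big({u^{(q)}}\big)'(x\pm)=-\frac{\bar\Pi(x\pm)}{\delta^2}+o(1), \qquad x\to 0 .
\end{equation*}
Since $\bar\Pi(x\pm)\to \infty$ as $x\to 0$, the constant $q$ is asymptotically negligible, i.e.\ $q+\bar\Pi(x\pm)\sim \bar\Pi(x\pm)$, and similarly $o(1)=o\big(\bar\Pi(x\pm)\big)$. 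Dividing through by $-\frac{q+\bar\Pi(x\pm)}{\delta^2}$ and passing to the limit therefore yields the claimed equivalence $\big({u^{(q)}}\big)'(x\pm)\sim -\frac{q+\bar\Pi(x\pm)}{\delta^2}$.

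There is no real obstacle: the only point to verify is that $\beta(\Pi)<1/2$ is the precise threshold at which $n=1$ suffices in Theorem \ref{t7}, so that the leading term in the asymptotic expansion of $(u^{(q)})'$ is already $-\bar\Pi(x\pm)/\delta^2$ and no further convolutions $\bar\Pi^{\ast k}$ ($k\geq 2$) appear. For larger Blumenthal--Getoor indices one would need $n\geq 2$ and then the convolution terms $\bar\Pi^{\ast k}(x)$ may be of the same order as (or dominate) $\bar\Pi(x\pm)$, destroying the simple equivalence; hence the restriction $\beta<1/2$ in the statement.
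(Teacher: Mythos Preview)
Your proposal is correct and matches the paper's intended reasoning: the corollary is stated without proof in the paper, being an immediate consequence of Theorem \ref{t7} with $n=1$, which is admissible precisely when $\beta(\Pi)<1/2$. Your case distinction between $\bar\Pi(0+)<\infty$ and $\bar\Pi(0+)=\infty$, and your observation about the apparent sign slip in the finite case of Theorem \ref{t7}, are both appropriate.
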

	
	The hypothesis of the theorem is not necessary to have the asymptotic $\bar\Pi(x)/\delta^2$ at zero as the next example shows.
	\begin{example}	
		If $X$ is stable with index $\alpha\in (0,1)$ and drift $\delta>0$, then
		\begin{align*}
			\bar\Pi^{\ast 2}(x)=C_{1}\int_0^x (x-y)^{-\alpha}y^{-\alpha}\,dy=C_{2} x^{-2\alpha+1}
		\end{align*}
		and inductively $\bar\Pi^{\ast n}(x)=C_{n} x^{-n\alpha+n-1}$. This, combined with Theorem \ref{t7}, implies that $${u}'(x)\sim -\frac{\bar\Pi(x)}{\delta^2}=-\frac{C_{1}x^{-\alpha}}{\delta^2},\quad\text{ as }x\to 0.$$
	\end{example}
	Finally, we consider the asymptotics of ${u}'$ at infinity which is technically more involved as we could not derive the asymptotics from the series representation (\ref{a}), (\ref{b}). Instead, the proof is based on a refinement of the Laplace inversion representation.
	
	\begin{thm}\label{t8}
		If $\E[X_1]<\infty$, then
		\begin{align*}
			\lim_{x\to \infty}{u}'(x+)=\lim_{x\to \infty}{u}'(x-)=0.
		\end{align*}
	\end{thm}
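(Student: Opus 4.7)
The plan is to apply the Laplace inversion representation of Corollary~\ref{t2} with $q=0$ and with $N$ chosen larger than $1/(1-\beta(\Pi))$, shift the inversion line from $\Re(s)=\lambda>0$ down to the imaginary axis, and then invoke the Riemann-Lebesgue lemma on the resulting Fourier integral while checking that the remaining non-integral summands vanish at infinity. The restriction $q=0$ is essential: any $q>0$ would add a constant-in-$\theta$ contribution to the integrand that the Riemann-Lebesgue lemma cannot kill.

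Fubini applied to $\int(1-e^{-sx})\Pi(dx)$ yields the identity $\delta+\mathcal L\bar\Pi(s)=\psi(s)/s$, and so
\[
h^{N,0}(s)=\frac{s\,\bigl(-\mathcal L\bar\Pi(s)\bigr)^N}{\delta^N\,\psi(s)}.
\]
The factor $s/\psi(s)$ is continuous on $\{\Re(s)\ge 0\}$: $\psi$ has no zeros apart from the origin (if $\Re\psi(i\theta)=\int(1-\cos\theta x)\Pi(dx)$ vanishes then $\Pi$ is concentrated on $(2\pi/\theta)\mathbb{Z}$, forcing $\Im\psi(i\theta)=\delta\theta\neq 0$), the limit $\psi(s)/s\to\E[X_1]<\infty$ at $s=0$ exists by hypothesis, and $\psi(s)/s\to\delta$ as $|s|\to\infty$ in the right half-plane, hence $|s/\psi(s)|\le M$ for some finite $M$. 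The pointwise bound $|1-e^{-sx}|\le 2^{1-\beta}|sx|^\beta$ for any $\beta\in(\beta(\Pi),1]$, combined with $\int_0^1 x^\beta\Pi(dx)<\infty$ (Assumption~\textbf{(A)}) and $\int_1^\infty x\Pi(dx)<\infty$ ($\E[X_1]<\infty$), yields the decay $|\mathcal L\bar\Pi(s)|\le C|s|^{\beta-1}$ on $\{|s|\ge 1,\ \Re(s)\ge 0\}$. For $N$ large enough these two estimates produce an integrable majorant $\phi(\theta)\ge|h^{N,0}(\lambda+i\theta)|$ that is uniform in $\lambda\in[0,1]$, so dominated convergence passes the limit $\lambda\downarrow 0$ through the inversion integral of Corollary~\ref{t2} and gives the representation
\[
u'(x\pm)=-\frac{\bar\Pi(x\pm)}{\delta^2}+\sum_{n=2}^{N-1}\frac{(-1)^n}{\delta^{n+1}}\bar\Pi^{\ast n}(x)+\frac{1}{2\pi}\int_{-\infty}^{\infty} e^{i\theta x}h^{N,0}(i\theta)\,d\theta.
\]

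Letting $x\to\infty$, the Fourier integral vanishes by the Riemann-Lebesgue lemma since $h^{N,0}(i\cdot)\in L^1(\R)$ by the previous step. The boundary values $\bar\Pi(x\pm)$ vanish because $\bar\Pi$ is nonincreasing and $\bar\Pi\in L^1(\R_+)$ (as $\int_0^\infty\bar\Pi(y)\,dy=\E[X_1]-\delta<\infty$). Finally, each convolution $\bar\Pi^{\ast n}$ with $n\ge 2$ vanishes by the splitting $\bar\Pi^{\ast 2}(x)\le 2\bar\Pi(x/2)\|\bar\Pi\|_1\to 0$, followed by an induction for larger $n$ using $\bar\Pi^{\ast n}\in L^1(\R_+)$ together with the just-proved pointwise vanishing. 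I expect the main technical hurdle to be the construction of the $\lambda$-uniform integrable majorant on $h^{N,0}(\lambda+i\theta)$: this simultaneously requires $\E[X_1]<\infty$ to keep $s/\psi(s)$ bounded near $s=0$, and Assumption~\textbf{(A)} to provide polynomial decay of $\mathcal L\bar\Pi(i\theta)$ strong enough to survive the $N$-th power in the numerator.
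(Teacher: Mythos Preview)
Your proposal is correct and follows essentially the same strategy as the paper: pass $\lambda\downarrow 0$ in the Laplace inversion representation of Corollary~\ref{t2} by dominated convergence, apply the Riemann--Lebesgue lemma to the resulting Fourier integral, and show that the finitely many convolution terms vanish at infinity. Your technical implementation is in places cleaner than the paper's: rewriting $h^{N,0}(s)=s(-\mathcal L\bar\Pi(s))^N/(\delta^N\psi(s))$ lets you handle the denominator via the global bound $|s/\psi(s)|\le M$ (using $\psi(s)/s\to\E[X_1]$ at the origin) rather than the paper's Remark~\ref{remark}, and the interpolation inequality $|1-e^{-sx}|\le 2^{1-\beta}|sx|^\beta$ yields the decay of $\mathcal L\bar\Pi$ more directly than the oscillatory-integral estimates of Lemma~\ref{l43}; your splitting argument for $\bar\Pi^{\ast n}(x)\to 0$ is also more explicit than the paper's appeal to integrability alone.
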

	As our approach does not work for infinite mean subordinators, it remains open whether ${u}'$ vanishes at infinity or not in this setting.
\section{Proofs}\label{S3}
				
	\subsection{Series and Integral Representations}
	\begin{proof}[Proof of Lemma \ref{l1}]
		When $q=0$ the statement was already mentioned in Section \ref{MM1} and is due to Kesten (see \cite{K69}). Let us now fix $q>0$. First note that for any random variable $e_{q}\sim Exp(q)$  which is independent of $X$ using the Markov property at time $e_{q}$ and integrating out the possible positions of $X_{e_{q}}\leq x$ we get
\[U(x)=\E\Big(\int_{0}^{e_{q}}\mathbf{1}_{\{X_{t}\leq x\}}dt\Big)+\int_{0}^{x}U(x-y)\P(X_{e_{q}}\in\,dy)=U^{(q)}(x)+qU*u^{(q)}(x),\]
since $u^{(q)}$ is continuous and bounded whenever the subordinator possesses a positive drift, see (iii) in Theorem 16 in \cite{B96}. Therefore, differentiation yields
\[u(x)=u^{(q)}(x)+qu*u^{(q)}(x).\]
Comparing the latest with
\begin{align*}
	\delta u(x)&=\P(X_{T_{x}}=x)\\
	&=\P(X_{T_{x}}=x;\,T_{x}\leq e_{q})+\int_{0}^{x}\P(X_{T(x-y)}=x-y)\P(X_{e_{q}}\in dy)\\
	&=\P(X_{T_{x}}=x;\,T_{x}\leq e_{q})+q\delta \int_{0}^{x}u(x-y)u^{(q)}(y)dy,
\end{align*}
 we get the expected relation
$$\P(X_{T_{x}}=x;\,T_{x}\leq e_{q})=\delta u^{(q)}(x).$$
We use this final relation to write
\begin{align*}
\P(T_{x}\leq e_{q})=1-\P(X_{e_{q}}\leq x)=\delta u^{(q)}(x)+\P(X_{T_{x}}>x;\,T_{x}\leq e_{q}).
\end{align*}
Next we have, using the compensation formula from Chapter 0 in \cite{B96},
	\begin{align*}
		\P(X_{T_{x}}>x;\,T_{x}\leq e_{q})&=q\int_{0}^{\infty}e^{-qt}\E\Big(\sum_{s\leq t}\mathbf{1}_{\{X_{s-}\leq x;\,X_{s}>x\}}\Big)dt\\
		&=q\int_{0}^{\infty}e^{-qt}\int_{0}^{t}\int_{0}^{x}\bar \Pi(x-y)\P(X_{s}\in dy)ds\\
		&=\int_{0}^{x}\bar\Pi(x-y)u^{(q)}(y)dy.
\end{align*}
This, combined with $\P(X_{e_{q}}\leq x)=q\int_{0}^{x} u^{(q)}(y)dy$, proves the assertion.
	\end{proof}
	\begin{proof}[Proof of Proposition \ref{prop1}:]	
	 	Since $\bar\Pi$ may not be continuous, we cannot directly apply the general Theorem 6 of \cite{CKS10}. Nonetheless, the proof of Theorem 6 of \cite{CKS10} can be adapted to this situation by noting that in fact it is not crucial that $g$ there is continuously differentiable but the fact that it is almost everywhere differentiable with a negative derivative vanishing at infinity. The latter is due to the fact that $g'$ is used in convolutions. In this case, for completeness, we sketch a proof but we refer to \cite{CKS10} for details. First note that each summand of
	\begin{align}
		\phi(x):=\sum_{n=0}^{\infty}(-1)^n\frac{\1\ast\big(\bar \Pi+q\big)^{\ast n}(x)}{\delta^{n+1}}\label{equ:b'},
	\end{align}
	is finite as for subordinators $\int_0^{1}x\Pi(dx)$ is finite showing that $\bar\Pi$ is not exploding too fast at zero. We now show that the series is absolutely converging in particular showing that it is well defined and can be rearranged. First, since $\1\ast\big(\bar\Pi+q\big)^{\ast(n-1)}(x)$ is increasing we get iteratively
	\begin{align}\label{iter}
		{\1\ast \big(\bar\Pi+q\big)^{\ast n}(x)}\leq \big(\1\ast \big(\bar\Pi+q\big)(x)\big)^n.
	\end{align}
	As $\1\ast(\bar\Pi+q)(x)$ is continuously increasing and $\1\ast(\bar\Pi+q)(0)=0$ there is $b>0$ such that for all $x\leq b$,  $\1\ast(\bar\Pi+q)(x)<\delta/2$ and hence the series defining $\phi$ is absolutely and uniformly convergent. Now we use an iterative procedure to extend the absolute convergence to all $x>0$: reasoning as above we obtain
	\begin{align*}
		\frac{\1\ast\big(\bar\Pi+q\big)^{\ast n}(2b)}{\delta^{n+1}}&=\int_0^b\frac{\1\ast \big(\bar\Pi+q\big)^{\ast(n-1)}(2b-y)}{\delta^{n}}\frac{(\bar\Pi+q)(y)}{\delta}\,dy\\
		&\quad+\int_b^{2b}\frac{\1\ast \big(\bar\Pi+q\big)^{\ast(n-1)}(2b-y)}{\delta^n}\frac{(\bar\Pi+q)(y)}{\delta}\,dy\\
		&\leq \frac{\1\ast \big(\bar\Pi+q\big)^{\ast(n-1)}(2b)}{\delta^n}\frac{\1\ast (\bar\Pi+q)(b)}{\delta}+ \frac{\1\ast \big(\bar\Pi+q\big)^{\ast(n-1)}(b)}{\delta^n}\frac{\1\ast(\bar\Pi+q)(2b)}{\delta}\\
		&\leq(n+1)\left(\frac{\1\ast (\bar\Pi+q)(b)}{\delta}\right)^{(n-1)}\frac{\1\ast (\bar\Pi+q)(2b)}{\delta}\leq \frac{(n+1)}{2^{n-1}}\frac{\1\ast (\bar\Pi+q)(2b)}{\delta}.
	\end{align*}
	Iterating this arguments as in the proof of Theorem 6 of \cite{CKS10} shows locally uniform and absolute convergence of the series for all $x>0$.
	To verify that $\phi$ equals $u$ we exploit Fubini's theorem to obtain
	\begin{align*}
		\phi(x)=\frac{1}{\delta}-(\bar\Pi+q)\ast\frac{1}{\delta}\sum_{n=1}^{\infty}(-1)^{n-1}\frac{\1\ast\big(\bar \Pi+q\big)^{\ast(n-1)}(x)}{\delta^{n}}=\frac{1}{\delta}-\frac{1}{\delta}(\bar\Pi+q)\ast \phi(x),
	\end{align*}
	implying that $\phi$ is a solution of Equation (\ref{a2}). Uniqueness of solutions for locally bounded functions follows since for two solutions $f$ and $\phi$
	\begin{align*}
		|f-\phi|(x)&\leq|(f-\phi)\ast 1/\delta(\bar\Pi+q)(x)|\\
		&=...\\
		&=|(f-\phi)\ast 1/\delta^k\big(\bar\Pi+q\big)^{\ast k}(x)|\\
		&\leq \sup_{s\leq x}|f-\phi|(s)1/\delta^k\1\ast(\bar\Pi+q)^{\ast k}(x)
	\end{align*}
	for all $k$. The right hand side converges to zero since $\delta^{-k}\1\ast\big(\bar\Pi+q\big)^{\ast k}(x)$ goes to zero for all $x\geq 0$ as shown above since it is a member of uniformly convergent series.
	\end{proof}

\begin{proof}[Proof of Corollary \ref{t1}]
	The proof follows directly from Proposition \ref{prop1}. To define the functions $u_1^{(q)}$ and $u_2^{(q)}$ we separate the positive and negative parts as
	\begin{align*}
		u^{(q)}(x)=\sum_{n=0}^{\infty}\frac{\1\ast\big(\bar \Pi+q\big)^{\ast 2n}(x)}{\delta^{n+1}}-\sum_{n=0}^{\infty}\frac{\1\ast\big(\bar \Pi+q\big)^{\ast (2n+1)}(x)}{\delta^{(2n+1)}}=:u^{(q)}_1(x)-u^{(q)}_2(x)
	\end{align*}
	which is possible taking into account the absolute convergence of the series representation. Each summand of the defining series for $u^{(q)}_1$ and $u^{(q)}_2$ is increasing, thus, $u^{(q)}_1$ and $u^{(q)}_2$ themselves are increasing.
\end{proof}

		We are now going to derive the Laplace transform representation of Proposition \ref{laplaceinversionu}. As mentioned above, the main difficulty is that we need to deal with convolutions of possibly unbounded functions.
		Let us first motivate the approach. Taking into account the series representation we divide $u^{(q)}$ into two parts:
		\begin{align*}
 			u^{(q)}(x)&=\sum_{n=0}^{N-1} \frac{(-1)^n}{\delta^{n+1}}\1\ast\big(\bar\Pi+q\big)^{\ast n}(x)+\sum_{n\geq N}\frac{(-1)^n}{\delta^{n+1}}\1\ast\big(\bar\Pi+q\big)^{\ast n}(x)\\
 			&=:\sum_{n=0}^{N-1} \frac{(-1)^n}{\delta^{n+1}}\1\ast\big(\bar\Pi+q\big)^{\ast n}(x)+\phi^{N,q}(x).
		\end{align*}
		The goal of the following Fourier analysis is to derive a convenient integral representation for $\phi^{N,q}$. For this sake first note that it follows as in the proof of Proposition \ref{prop1} that $\phi^{N,q}$ is the unique locally 			 bounded solution of the following renewal type equation:
		\begin{equation}\label{LEq1}
			 \phi^{N,q}(x)=\frac{(-1)^{N}}{\delta^{N+1}}\1\ast\big(\bar\Pi+q\big)^{*N}(x)-\frac{1}{\delta}\phi^{N,q}(x)*(\bar\Pi+q)(x).
		\end{equation}
		If we were allowed to turn convolution into multiplication in Laplace domain for $Re(s)>0$, Equation (\ref{LEq1}) transforms into
		\begin{align}\label{conve}
			\mathcal L \phi^{N,q}(s)=\frac{(-1)^{N}}{\delta^{N+1}} \mathcal L \1(s)\mathcal L(\bar\Pi+q)(s)^N-\frac{1}{\delta}\mathcal L\phi^{N,q}(s) \mathcal L(\bar \Pi+q)(s).
		\end{align}
		Solving (\ref{conve}) for $\mathcal L\mathcal \phi^{N,q}$, leads to
		\begin{align}\label{g}
	 		\mathcal L \phi^{N,q}(s)= \frac{(-1)^N\mathcal L \1(s)(\mathcal L(\bar\Pi+q)(s))^N}{\delta^{N+1}(1+\frac{1}{\delta}\mathcal L(\bar \Pi+q)(s))}=:g^{N,q}(s)
		\end{align}
		whenever the quotient is well-defined. If furthermore we were able to verify integrability conditions needed for Laplace inversion we obtain an integral representation for $\phi^{N,q}$. We are now going to check what is needed to turn this formal approach into rigorous statements.\medskip
		
		\begin{center}\textbf{For the rest of this section we set $\delta=1$ in order to simplify the notation.}\end{center}\medskip
		
		For the first step it is shown that indeed Equation (\ref{LEq1}) turns into Equation (\ref{conve}). A priori this is not clear due to the possible singularity of $\bar\Pi$ at zero.
		
		\begin{lemma}\label{l33}
			There is $\lambda_0\geq 0$ such that $\mathcal L \phi^{N,q}$ solves (\ref{conve}) on $\{\lambda+i\theta:\lambda\geq \lambda_0, \theta\in\R\}$.
		\end{lemma}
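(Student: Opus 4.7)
The plan is to verify that Equation~\eqref{LEq1} can be transformed into Equation~\eqref{conve} by taking bilateral Laplace transforms, which amounts to (i) checking that every Laplace transform appearing in \eqref{conve} is defined in a common right half-plane, and (ii) justifying the convolution-to-multiplication step via Fubini. The only subtleties come from the fact that $\bar\Pi$ may be unbounded near zero and from the need to control the growth of $\phi^{N,q}$ at infinity; both will be handled by the integrability condition \eqref{c} and by the expression $\phi^{N,q}=u^{(q)}-\sum_{n=0}^{N-1}(-1)^{n}\mathbf 1\ast(\bar\Pi+q)^{\ast n}$.

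First I would establish that $\phi^{N,q}$ is at most polynomially bounded on $[0,\infty)$. Since $u^{(q)}$ is bounded (by the assumption $\delta>0$) and since the iterative bound \eqref{iter} established inside the proof of Proposition~\ref{prop1} gives $\mathbf 1\ast(\bar\Pi+q)^{\ast n}(x)\leq(\mathbf 1\ast(\bar\Pi+q)(x))^{n}=O(x^{n})$ at infinity, the polynomial bound follows from the displayed decomposition. In particular $\mathcal L\phi^{N,q}(s)$ is defined and holomorphic on $\{\mathrm{Re}(s)>0\}$, and the integral is absolutely convergent for $\mathrm{Re}(s)\geq\lambda_{0}$ as soon as $\lambda_{0}>0$.

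Next I would verify that $\mathcal L(\bar\Pi+q)(s)$ makes sense on the same half-plane. The constant part contributes $q/s$. For $\bar\Pi$, the tail is bounded on $[1,\infty)$ while the possible singularity on $(0,1]$ is controlled by $\int_{0}^{1}\bar\Pi(y)\,dy\leq\bar\Pi(1)+\int_{0}^{1}y\,\Pi(dy)<\infty$ thanks to \eqref{c}. Hence $\int_{0}^{\infty}e^{-\lambda x}\bar\Pi(x)\,dx<\infty$ for every $\lambda>0$, and $\mathcal L(\bar\Pi+q)$ is defined on $\{\mathrm{Re}(s)>0\}$. A similar argument shows $\mathcal L\mathbf 1(s)=1/s$ is defined there.

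Finally I would apply Fubini to both convolutions appearing in \eqref{LEq1}. For the term $\phi^{N,q}\ast(\bar\Pi+q)$ the integrand $e^{-\lambda(u+y)}|\phi^{N,q}(u)|(\bar\Pi+q)(y)$ is integrable on $[0,\infty)^{2}$ for every $\lambda>0$ by the bounds of the two previous paragraphs, so the change of variables $x=u+y$ yields $\mathcal L(\phi^{N,q}\ast(\bar\Pi+q))(s)=\mathcal L\phi^{N,q}(s)\mathcal L(\bar\Pi+q)(s)$. Iterating the same absolute-integrability argument $N$ times handles $\mathbf 1\ast(\bar\Pi+q)^{\ast N}$ and gives $\mathcal L\mathbf 1(s)(\mathcal L(\bar\Pi+q)(s))^{N}$. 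Substituting both identities into the Laplace transform of \eqref{LEq1} produces \eqref{conve} on the half-plane $\{\mathrm{Re}(s)\geq\lambda_{0}\}$ for any $\lambda_{0}>0$, so any such $\lambda_{0}$ works. The main (minor) obstacle is the technical bookkeeping around the near-zero singularity of $\bar\Pi$, but this is entirely absorbed by \eqref{c}; there is no need to pick $\lambda_{0}$ large, only strictly positive.
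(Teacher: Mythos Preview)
Your argument is correct and in fact slightly slicker than the paper's. The paper controls $\mathcal L\phi^{N,q}$ via the \emph{infinite} series $\phi^{N,q}=\sum_{n\geq N}(-1)^n\delta^{-(n+1)}\mathbf 1\ast(\bar\Pi+q)^{\ast n}$, which forces it to choose $\lambda_0$ large enough that $\int_0^\infty e^{-\lambda_0 x}(\bar\Pi+q)(x)\,dx<1$; only then does the geometric series $\sum_{n\geq N}|\mathcal L(\bar\Pi+q)(s)|^n$ converge and Fubini is justified term by term. You instead use the \emph{finite} decomposition $\phi^{N,q}=u^{(q)}-\sum_{n<N}(-1)^n\delta^{-(n+1)}\mathbf 1\ast(\bar\Pi+q)^{\ast n}$ together with the boundedness of $u^{(q)}$ and the polynomial bound from \eqref{iter}, which shows directly that $\phi^{N,q}$ has at most polynomial growth and hence is Laplace-transformable on the whole half-plane $\{\mathrm{Re}(s)>0\}$. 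This buys you a stronger conclusion (any $\lambda_0>0$ works) and, as a bonus, would render the Cauchy contour-shifting step in the proof of Proposition~\ref{laplaceinversionu} superfluous. The paper's route has the minor advantage of not needing to invoke the boundedness of $u^{(q)}$ (which comes from an external reference), but your route is more elementary once that fact is in hand.
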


		\begin{proof}
			To show that the first transformation can be carried out we show for $s=\lambda+i\theta$, $\lambda$ bounded from below by some $\lambda_0$, that $\mathcal L (\bar\Pi+q)(s)$ and $\mathcal L \phi^{N,q}(s)$ are well defined to deduce $\mathcal L(\bar\Pi+q)^{\ast n}(s)=\mathcal L (\bar\Pi+q)(s)^n$, $\mathcal L (\bar\Pi+q)\ast \phi^{N,q}(s)=\mathcal L (\bar\Pi+q)(s)\mathcal L \phi^{N,q}(s)$, and $\mathcal L \1\ast (\bar\Pi+q)(s)=\mathcal L \mathbf 1 (s)\mathcal L (\bar\Pi+q)(s)$.
			
			\smallskip To validate Laplace transformation of $(\bar\Pi+q)^n$ for $\lambda$ large enough, note that we may choose $\lambda_0$ such that
			\begin{align}\label{11}
				\int_0^{\infty}e^{-\lambda_0 x} (\bar\Pi+q)(x)\,dx<1
			\end{align}
			which is possible as $\int_0^1\bar\Pi(x)\,dx<\infty$ and $\lim_{x\to\infty}\bar\Pi(x)=0$. It now follows directly from Fubini's theorem that iterated convolutions of $\bar\Pi+q$ turn into multiplication under Laplace transforms. We now show that $\phi^{N,q}$ can be Laplace transformed for which we use the fact that
			\begin{align*}
				\sum_{n=N}^{\infty} \big|\mathcal L \big(\bar\Pi+q\big)^{\ast n}(s)\big|=\sum_{n=N}^{\infty} \big|\mathcal L (\bar\Pi+q)(s)\big|^n<\infty
			\end{align*}
			to justify the change of summation and integration in the following:
			\begin{align}\label{12}
				\mathcal L \phi^{N,q}(s)&=\int_0^{\infty}e^{-sx}\sum_{n= N}^{\infty}(-1)^n\big(\bar\Pi+q\big)^{\ast n}(x)\,dx
				=\sum_{n= N}^{\infty}(-1)^n\int_0^{\infty}e^{-sx}\big(\bar\Pi+q\big)^{\ast n}(x)\,ds<\infty.
			\end{align}
			Now it is only left to show $\mathcal L \phi^{N,q}\ast (\bar\Pi+q)(s)=\mathcal L \phi^{N,q}(s)\mathcal L (\bar \Pi+q)(s)$ and further $\mathcal L \1\ast (\bar\Pi+q)(s)=\mathcal L \1 (s)\mathcal L (\bar\Pi+q)(s)$. As $\bar\Pi+q$ and $\phi^{N,q}$ can be Laplace transformed, we obtain from (\ref{11}) and (\ref{12}) the bound
			\begin{align*}
				&\quad\int_0^{\infty}\int_0^{\infty}\Big|e^{-s t}\phi^{N,q}(t-x)(\bar\Pi+q)(x)\Big|\,dtdx\\
				&=\int_0^{\infty}\int_0^{\infty}\Big|e^{-\lambda (t-x)}\phi^{N,q}(t-x)\Big|\Big|e^{-\lambda x}(\bar\Pi+q)(x)\Big|\,dtdx<\infty
			\end{align*}
			enabling us to apply Fubini's theorem once more to obtain $\mathcal L \phi^{N,q}\ast (\bar\Pi+q)(s)=\mathcal L \phi^{N,q}(s)\mathcal L (\bar\Pi+q)(s)$.
			The final identity $\mathcal L \1\ast (\bar\Pi+q)(s)=\mathcal L \1 (s)\mathcal L (\bar\Pi+q)(s)$ follows from similar arguments noting that $|\mathcal L \1(s)|\leq \frac{1}{\lambda}$.
		\end{proof}		
		The second step in our analysis consists of showing that the convolution Equation (\ref{conve}) can indeed be solved in Laplace domain leading to Equation (\ref{g}). The following lemma is stronger then the previous as we can show that $g^{N,q}(s)$ is well defined for any $s=\lambda+i\theta$ with $\lambda>0$ even though a priori we do not know that $g^{N,q}$ is the Laplace transform of $\phi^{N,q}$.
			
	\begin{lemma}\label{l11}
		Suppose that $\Pi$ is non-trivial, then $g^{N,q}(\lambda+i\theta)$ is well-defined for $\lambda>0$.
	\end{lemma}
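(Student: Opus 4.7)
The plan is to reduce the question of well-definedness to (i) finiteness of $\mathcal L(\bar\Pi+q)(\lambda+i\theta)$ and (ii) non-vanishing of the denominator $1+\mathcal L(\bar\Pi+q)(\lambda+i\theta)$ (recall we have set $\delta=1$). The first issue is essentially a routine integrability check; the real content is (ii), and the natural way to handle it is to identify the denominator with a simple expression in the Laplace exponent $\psi$, for which a strict positivity of the real part follows directly from $\delta>0$.

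First I would verify that $\mathcal L(\bar\Pi+q)(\lambda+i\theta)$ is absolutely convergent for every $\lambda>0$. Fubini applied to the integrability condition $\int_0^\infty(1\wedge x)\Pi(dx)<\infty$ gives $\int_0^1 \bar\Pi(x)\,dx<\infty$, while for $x\geq 1$ the bound $\bar\Pi(x)\leq \bar\Pi(1)<\infty$ combined with the exponential decay $e^{-\lambda x}$ takes care of the tail. The constant term $q$ is trivially handled by $\int_0^\infty qe^{-\lambda x}\,dx=q/\lambda$. This makes the numerator $(-\mathcal L(\bar\Pi+q)(\lambda+i\theta))^N$ a well-defined complex number.

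Next, I would rewrite the denominator in terms of the Laplace exponent. Exchanging the order of integration,
\begin{equation*}
\mathcal L\bar\Pi(s)=\int_0^\infty e^{-sx}\int_x^\infty\Pi(dy)\,dx=\int_0^\infty\frac{1-e^{-sy}}{s}\,\Pi(dy)=\frac{\psi(s)-\delta s}{s},
\end{equation*}
so that
\begin{equation*}
1+\frac{1}{\delta}\mathcal L(\bar\Pi+q)(s)=1+\frac{\psi(s)-\delta s+q}{\delta s}=\frac{\psi(s)+q}{\delta s}.
\end{equation*}
Consequently the denominator of $g^{N,q}(s)$ agrees, up to the factor $s\delta^{N+1}$, with $(\psi(s)+q)/\delta$, and well-definedness reduces to checking $\psi(s)+q\neq 0$ for $\lambda>0$.

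The main (and only really substantive) step is this non-vanishing statement, which I expect to be straightforward given the explicit form of $\psi$. Taking real parts,
\begin{equation*}
\operatorname{Re}\psi(\lambda+i\theta)=\delta\lambda+\int_0^\infty\bigl(1-e^{-\lambda x}\cos(\theta x)\bigr)\,\Pi(dx)\geq\delta\lambda,
\end{equation*}
since the integrand is bounded below by $1-e^{-\lambda x}\geq 0$. As $\delta>0$ (the standing positive-drift assumption) and $q\geq 0$, we conclude $\operatorname{Re}(\psi(s)+q)\geq\delta\lambda>0$, which rules out $\psi(s)+q=0$. Hence $g^{N,q}(\lambda+i\theta)$ is well-defined for every $\lambda>0$, and the nontriviality of $\Pi$ is not really needed beyond making the statement non-vacuous.
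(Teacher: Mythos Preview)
Your proof is correct and takes a genuinely different route from the paper. The paper argues by contradiction: assuming $\mathcal L(\bar\Pi+q)(s_0)=-1$, it examines the vanishing imaginary part $\int_0^\infty e^{-\lambda_0 x}\sin(\theta_0 x)(\bar\Pi+q)(x)\,dx=0$, splits the integral into periods of the sine, and uses strict monotonicity of $e^{-\lambda_0 x}\bar\Pi(x)$ to force each period's contribution to vanish, yielding $\bar\Pi+q\equiv 0$ and a contradiction with the non-triviality of $\Pi$ (or $q>0$). Your approach instead identifies the denominator with $(\psi(s)+q)/(\delta s)$ via the Fubini identity $\mathcal L\bar\Pi(s)=(\psi(s)-\delta s)/s$ and then reads off $\operatorname{Re}(\psi(s)+q)\geq\delta\lambda>0$ directly. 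This is shorter, more conceptual, and --- as you correctly observe --- does not actually use non-triviality of $\Pi$. The paper's period-splitting argument, while longer, is phrased entirely at the level of $\bar\Pi$ without appealing to $\psi$, and is set up so that its extension to the boundary $\lambda=0$ (needed later for the asymptotics at infinity; see the Remark following the lemma) is immediate under the finite-mean hypothesis. Your argument also extends to $\lambda=0$, $\theta\neq 0$, but one then has $\operatorname{Re}\psi(i\theta)=\int_0^\infty(1-\cos(\theta x))\,\Pi(dx)\geq 0$ with possible equality on lattices, so a short supplementary check of the imaginary part is required there.
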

	
	\begin{proof}
		To show that $g^{N,q}(s)$ is well-defined at $s\in\mathbb{C}$ with positive real-part, it suffices to show that
		\begin{align*}
			\mathcal L(\bar\Pi+q)(s)\neq -1.
		\end{align*}
		Let us assume that $\mathcal L(\bar\Pi+q)(s_0)=-1$ for some $s_{0}=\lambda_0+i\theta_0$ with $\lambda_0>0$. Without loss of generality way may assume $\theta_0\neq0$ as otherwise the contradiction follows trivally.			The assumption necessarily implies that
		\begin{align}\label{cd}
			Im(\mathcal L(\bar\Pi+q)(s_0))=\int_{0}^{\infty}e^{-\lambda_0 x}\sin(\theta_0 x)(\bar\Pi+q)(x)dx=0.
		\end{align}
		As $\bar\Pi$ is decreasing, we see that
		\begin{align}
			\int_{0}^{\infty}e^{-\lambda_0 x}(\bar\Pi+q)(x)\,dx<\infty.\label{neu}
		\end{align}
		Dividing the integral of the absolutely integrable function $e^{-\lambda_0 x}(\bar\Pi+q)(x)\sin(\theta_0 x)$ into pieces of the length of one period of the sine function and applying Fubini's theorem, we obtain
		\begin{align*}
			0&=\int_0^{\infty}e^{-\lambda_0 x}(\bar\Pi+q)(x)\sin(\theta_0 x)dx\\
			 &=\int_{2\pi/\theta_0}^{\infty}\sum_{k=0}^{\infty}\mathbf{1}_{[2k\pi/\theta_0,2(k+1)\pi/\theta_0)}(x)e^{-\lambda_0 x}(\bar\Pi+q)(x)\sin(\theta_0 x)dx\\
			&=\sum_{k=0}^{\infty}\int_{\frac{2k\pi}{\theta_0}}^{\frac{2(k+1)\pi}{\theta_0}}e^{-\lambda_0 x}(\bar\Pi+q)(x)\sin(\theta_0 x)dx.
		\end{align*}
		As $e^{-\lambda_0 x}\bar\Pi(x)$ is strictly decreasing, unless $(\bar\Pi+q)(x)=0$ and $(\bar\Pi+q)(x)$ is non-increasing each summand
		\begin{align*}
			\int_{\frac{2k\pi}{\theta_0}}^{\frac{2(k+1)\pi}{\theta_0}}e^{-\lambda_0 x}(\bar\Pi+q)(x)\sin(\theta_{0} x)\,dx
		\end{align*}
		must be non-negative and hence vanish as the total sum is zero. In particular, this implies that $\bar\Pi(x)+q=0$ for all $x>0$ so that for $q>0$ a direct contradiction occurs. For $q=0$ the contradiction occurs as $\Pi$ was assumed to be non-trivial. Thus  $g^{}(s)$ is well-defined.
	\end{proof}
		
	\begin{rem}\label{remark}
		If furthermore $\E[X_1]<\infty$, then $g^{N,0}$ is well-defined also on the imaginary axis. This follows from the same proof noting that in this case (\ref{neu}) holds as well for $\lambda_0=0$. Indeed as each term $\int_{\frac{2k\pi}{\theta_0}}^{\frac{2(k+1)\pi}{\theta_0}}\bar\Pi(x)\sin(\theta_{0} x)\,dx$ has to vanish we conclude that $\Pi$ has to be concentrated on $\{2k\pi/\theta_{0}\}_{k\geq 1}$. On the other hand, in this case, as
		\begin{align*}
		Re(\mathcal L\bar\Pi(s_0))&=\int_{0}^{\infty}e^{-\lambda_0 x}\cos(\theta_0 x)\bar\Pi(x)dx\\
		&=\sum_{k\geq 0}\bar\Pi\Big(\frac{2k\pi}{\theta_{0}}\Big)\int_{\frac{2k\pi}{\theta_0}}^{\frac{2(k+1)\pi}{\theta_0}}\cos(\theta_0 x)dx=0,
		\end{align*}
		we see that $Re(\mathcal L\bar\Pi(s_0))\neq-1$ and thus $g^{N,q}(s)$ is well-defined.
	\end{rem}

		The third step of our derivation of an integral representation for $u^{(q)}$ is an inversion approach for $g^{N,q}$. We now briefly discuss the connection to Fourier transforms which is crucial for the inversion: for integrable functions $f$ define for $x\in \R$
		\begin{align*}
			\mathcal F f(x)=\int_{-\infty}^{\infty}e^{-ix t}f(t)\,dt.
		\end{align*}
		Apparently, the Fourier transform $\mathcal F$ appears when evaluating the Laplace transform on the imaginary line only. Defining the auxiliary function
		\begin{align*}
			r_{\lambda}(x)=e^{-\lambda x}(\bar\Pi+q)(x)
		\end{align*}
		the simple connection is
		\begin{align*}
			\mathcal F r_{\lambda}(\theta)=\mathcal L (\bar\Pi+q)(\lambda+i\theta).
		\end{align*}
		Taking into account this close connection of Laplace and Fourier transforms, classical Fourier inversion for $\lambda>0$ gives the inversion formula (also known as Bromwich integral)
		\begin{align*}
			\phi^{N,q}(x)&=\frac{1}{2\pi}\int_{-\infty}^{\infty}\mathcal L \phi^{N,q}(\lambda+i\theta)e^{(\lambda+i\theta)x}\,d\theta\\
			&=\frac{1}{2\pi}e^{\lambda x}\int_{-\infty}^{\infty}g^{N,q}(\lambda+i\theta)e^{i\theta x}\,d\theta
		\end{align*}
		if $g^{N,q}(\lambda+i\theta)$ is absolutely integrable with respect to $\theta$. To prove the needed integrability we start with a simple estimate.
		
		\begin{lemma}\label{l4}
			For any $a>0$ and $y\leq a$ the estimate $\bar\Pi(y)\leq C(a,\eps)y^{-(\beta(\Pi)+\epsilon)}$ holds for all $\eps>0$ with $\beta(\Pi)+\eps<1$.
		\end{lemma}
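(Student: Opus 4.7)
The plan is to run a straightforward Chebyshev-type estimate for the tail $\bar\Pi$, using only the definition of the Blumenthal--Getoor index plus the general integrability condition \eqref{c}.

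Fix $\eps > 0$ with $\gamma := \beta(\Pi) + \eps < 1$. By the very definition of $\beta(\Pi)$, since $\gamma > \beta(\Pi)$, the integral $\int_0^1 x^\gamma \Pi(dx)$ is finite. This will be our principal quantitative input.

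First I would handle the regime $y \leq \min(1,a)$. Split $\bar\Pi(y) = \int_y^1 \Pi(dx) + \bar\Pi(1)$. For the first piece, note that on $[y,1]$ one has $1 \leq (x/y)^\gamma$, so
\begin{align*}
\int_y^1 \Pi(dx) \leq y^{-\gamma} \int_y^1 x^\gamma \,\Pi(dx) \leq y^{-\gamma} \int_0^1 x^\gamma \,\Pi(dx),
\end{align*}
and the right-hand side is finite by the previous paragraph. For the tail piece $\bar\Pi(1)$, condition \eqref{c} gives $\bar\Pi(1) < \infty$, and since $y \leq a$ we trivially have $\bar\Pi(1) \leq \bar\Pi(1)\,a^\gamma\,y^{-\gamma}$. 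Adding the two bounds yields the desired estimate with the constant $C(a,\eps) = \int_0^1 x^\gamma\,\Pi(dx) + \bar\Pi(1)\,a^\gamma$.

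If $a > 1$, there is a residual range $1 \leq y \leq a$ to handle, but this is immediate from monotonicity of $\bar\Pi$: $\bar\Pi(y) \leq \bar\Pi(1) \leq \bar\Pi(1)\,a^\gamma\,y^{-\gamma}$, and this fits in the same constant. Combining the two ranges gives a single constant $C(a,\eps)$ valid for all $y \leq a$.

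There is no real obstacle here: the only mild care needed is to make sure the constant is uniform across the full interval $(0,a]$, which is why the factor $a^\gamma$ must be inserted when $y$ is not small. The key analytic fact is that the exponent $\gamma$ is chosen strictly above the Blumenthal--Getoor index so that $\int_0^1 x^\gamma\,\Pi(dx) < \infty$ by definition; the hypothesis $\gamma < 1$ plays no role in the proof itself but is recorded in the statement because later applications of the lemma exploit it.
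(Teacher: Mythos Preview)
Your proof is correct and follows essentially the same Chebyshev/Markov tail-bound idea as the paper: both use that $\int_0^1 x^{\beta(\Pi)+\eps}\,\Pi(dx)<\infty$ and bound $\bar\Pi(y)-\bar\Pi(\text{const})$ by $y^{-(\beta(\Pi)+\eps)}$ times this moment. The only cosmetic difference is that the paper phrases the argument as $\limsup_{y\to 0} y^{\beta(\Pi)+\eps}\bar\Pi(y)=0$ (a slightly stronger conclusion), while you give an explicit constant $C(a,\eps)$ directly; for the lemma as stated both are equivalent.
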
		
		
		\begin{proof}
			First note that by the definition of the Blumenthal-Getoor index $\int_0^1y^{\beta(\Pi)+\eps}\,\Pi(dy)<\infty$ for any $\eps>0$. The claim follows from the simple observation that for any $\alpha>0$ there is $\delta>0$ such that for any $\tau<\delta$
			\begin{align*}
   				 \alpha\geq\int_{\tau}^{\delta}y^{\beta(\Pi)+\eps}\Pi(dy)\geq\tau^{\beta(\Pi)+\eps}(\bar{\Pi}(\tau)-\bar{\Pi}(\delta))
   			\end{align*}
   			as $\bar\Pi$ is decreasing. Letting $\tau$ go to zero, we deduce $\limsup_{\tau\to 0} \tau^{\beta(\Pi)+\eps}\bar\Pi(\tau)\leq \alpha.$
		\end{proof}
		
		The need for Assumption \textbf{(A)} comes from the following lemma and its consequences.
		
		\begin{lemma}\label{l43}
			For any $\lambda>0$ and $\epsilon>0$ the following estimate holds:
			\begin{align*}
				|\mathcal L \bar\Pi(\lambda+i\theta)|\leq \begin{cases}
				                                           C\frac{1}{\lambda}&:|\theta|\leq1,\\
				                                           C|\theta|^{\beta(\Pi)+\epsilon-1}&:|\theta|>1,
				                                          \end{cases}
 			\end{align*}
 			where $C=C(\eps)>0$.
		\end{lemma}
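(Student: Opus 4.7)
My plan is to treat the low-frequency ($|\theta|\le 1$) and high-frequency ($|\theta|>1$) regimes separately, using the oscillation of $e^{-i\theta x}$ only in the latter.

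For $|\theta|\le 1$ the trivial estimate $|e^{-i\theta x}|=1$ yields $|\mathcal L\bar\Pi(\lambda+i\theta)|\le\int_0^\infty e^{-\lambda x}\bar\Pi(x)\,dx$. A single application of Fubini rewrites the right-hand side as $\lambda^{-1}\int_0^\infty(1-e^{-\lambda y})\Pi(dy)=(\psi(\lambda)-\delta\lambda)/\lambda$, and the elementary inequality $1-e^{-\lambda y}\le\min(\lambda y,1)$ together with condition \eqref{c} bounds the numerator (for each fixed $\lambda$), yielding the required $C/\lambda$.

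For $|\theta|>1$ the natural scale is $a=1/|\theta|\le 1$, and I would split
\[
\mathcal L\bar\Pi(\lambda+i\theta)=\int_0^a e^{-(\lambda+i\theta)x}\bar\Pi(x)\,dx+\int_a^\infty e^{-(\lambda+i\theta)x}\bar\Pi(x)\,dx=:I_1+I_2.
\]
On $[0,a]$, Lemma \ref{l4} gives $\bar\Pi(x)\le Cx^{-(\beta(\Pi)+\epsilon)}$, so
\[
|I_1|\le C\int_0^a x^{-(\beta(\Pi)+\epsilon)}\,dx=\frac{C\,a^{1-\beta(\Pi)-\epsilon}}{1-\beta(\Pi)-\epsilon}=C'|\theta|^{\beta(\Pi)+\epsilon-1}.
\]
For $I_2$ I would integrate by parts in the Stieltjes sense, reading $-d\bar\Pi=\Pi$ on $(0,\infty)$; this produces a boundary term $e^{-(\lambda+i\theta)a}\bar\Pi(a)/(\lambda+i\theta)$ (the boundary at infinity vanishes because $\bar\Pi(\infty)=0$) together with $(\lambda+i\theta)^{-1}\int_a^\infty e^{-(\lambda+i\theta)x}\Pi(dx)$. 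Each piece carries a factor $1/|\lambda+i\theta|\le 1/|\theta|$; using $\int_a^\infty\Pi(dx)=\bar\Pi(a)$ and Lemma \ref{l4} once more, $\bar\Pi(a)\le C|\theta|^{\beta(\Pi)+\epsilon}$, so $|I_2|\le 2C|\theta|^{\beta(\Pi)+\epsilon-1}$.

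The only delicate step is the integration by parts in the high-frequency regime: because $\bar\Pi$ may have atoms, I read $-d\bar\Pi$ as the Radon measure $\Pi$ on $(0,\infty)$, which absorbs jumps of $\bar\Pi$ into the measure term without spurious contributions. Beyond that, the argument is purely a matter of combining the two pieces with Lemma \ref{l4}.
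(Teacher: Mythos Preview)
Your argument is correct and reaches the same bounds as the paper, but by a genuinely different route in the high-frequency case. For $|\theta|\le 1$ the paper also ends up with a bound of the form $C+C/\lambda$ (explicitly flagged there as ``not uniform in $\lambda$''), so your parenthetical ``for each fixed $\lambda$'' matches the paper's actual conclusion rather than the slightly loose statement of the lemma; your Fubini computation $\int_0^\infty e^{-\lambda x}\bar\Pi(x)\,dx=\lambda^{-1}\int(1-e^{-\lambda y})\Pi(dy)$ is in fact cleaner than the direct estimate the paper carries out.

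For $|\theta|>1$ the paper takes a more hands-on path: it treats $\operatorname{Re}$ and $\operatorname{Im}$ separately, substitutes $y\mapsto y/\theta$, and exploits the monotonicity of $r_\lambda(y)=e^{-\lambda y}\bar\Pi(y)$ to telescope the integral of $\sin(y)r_\lambda(y/\theta)$ (respectively $\cos$) over successive periods, leaving only the contribution from $[0,\pi]$ (respectively $[0,\pi/2]$), which Lemma~\ref{l4} then bounds by $C|\theta|^{\beta(\Pi)+\eps-1}$. Your split at $a=1/|\theta|$ followed by Stieltjes integration by parts on the tail is the standard oscillatory-integral manoeuvre and is shorter: it handles the complex exponential in one stroke and never separates real and imaginary parts. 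The only cost is the care you correctly flag about reading $-d\bar\Pi=\Pi$ when $\bar\Pi$ has jumps; the paper's period-cancellation argument sidesteps that technicality entirely, using only that $r_\lambda$ is decreasing. Both approaches ultimately rest on Lemma~\ref{l4} in the same way.
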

		
	\begin{proof}
		We estimate the imaginary and real part of $\mathcal L \bar\Pi$ separately. For the imaginary part we first estimate for $\theta> 0$:
		\begin{align*}\label{LEq5}
			\nonumber &\big|Im(\mathcal L\bar\Pi(\lambda+i\theta))\big|\\
			&\nonumber=\theta^{-1}\left|\int_{0}^{\infty}\sin(y)r_{\lambda}(y/\theta)dy\right|\\
			 &\nonumber=\theta^{-1}\left|\sum_{k=0}^{\infty}\int_{2k\pi}^{(2k+1)\pi}(r_{\lambda}(y/\theta^{})-r_{\lambda}((y+\pi)/\theta))\sin(y)dy\right|\\
			 \nonumber&\leq\theta^{-1}\left|\pi\sum_{k=1}^{\infty}r_{\lambda}(2k/\theta)-r_{\lambda}((2k+2)/\theta^{})+\theta^{-1}\int_{0}^{\pi}(r_{\lambda}(y/\theta^{})-r_{\lambda}((y+\pi)/\theta^{}))dy\right|\\\
			&\nonumber\leq\theta^{-1}\int_{0}^{\pi}r_{\lambda}(y/\theta^{})dy\\
			&\leq C\theta^{\beta(\Pi)+\eps-1}\int_{0}^{\pi}y^{-(\beta(\Pi)+\eps)}dy=C\theta^{\beta(\Pi)+\eps-1},
		\end{align*}
		where we have used Lemma \ref{l4} and that $r_{\lambda}$ is decreasing in the last inequality. Unfortunately, this uniform in $\lambda$ upper bound is not suitable for all $\theta$ as the constant of Lemma \ref{l4} explodes as $\theta$ approaches zero. To circumvent this problem we derive a different upper bound that works everywhere equally well but is not uniform in $\lambda$:
		\begin{align*}
			&\big|Im(\mathcal L\bar\Pi(\lambda+i\theta))\big|\\
			&\leq \theta^{-1}\int_{0}^{\infty}r_{\lambda}(y/\theta)dy\\
			 &=\theta^{-1}\int_{0}^{\theta}r_{\lambda}(y/\theta)dy+\theta^{-1}\int_{\theta}^{\infty}r_{\lambda}(y/\theta)dy\\
			&\leq \theta^{-1}\int_{0}^{\theta}\bar\Pi(y/\theta)dy+\theta^{-1}\bar\Pi(1)\int_{\theta}^{\infty}e^{-(y\lambda/\theta)}dy\\
			&\leq \theta^{-1}C\int_{0}^{\theta}(y/\theta)^{-(\beta(\Pi)+\eps)}dy+\theta^{-1}\bar\Pi(1)\frac{\theta}{\lambda}\\
			&=C+C\frac{1}{\lambda},
		\end{align*}
		where we again used Lemma \ref{l4} but now $y/\theta$ does not explode for small $\theta$ as we only integrate up to $\theta$. Having an estimate for positive $\theta$ we note that $Im(\mathcal L\bar\Pi(\lambda+i\theta))$ as a function of $\theta$ is odd to deduce that
		\begin{equation}\label{LEq6}
			|Im(\mathcal L\bar\Pi(\lambda+i\theta))|\leq \begin{cases}
				                                           C\frac{1}{\lambda}&:|\theta|\leq 1,\\
				                                           C|\theta|^{\beta(\Pi)+\eps-1}&:|\theta|>1.
				                                          \end{cases}
		\end{equation}
		Similarly, we estimate the real part
		\begin{align*}\label{LEq7}
			\nonumber& \big|Re(\mathcal L\bar\Pi(\lambda+i\theta))\big|\\
			&\nonumber=\left|\int_{0}^{\infty}\cos(\theta y)r_{\lambda}(y)dy\right|\\
			 \nonumber&=\left|\int_{0}^{\frac{\pi}{2}}\cos(y)r_{\lambda}(y\theta^{-1})dy+\theta^{-1}\sum_{k=1}^{\infty}\int_{(4k+1)\frac{\pi}{2}}^{(4k+3)\frac{\pi}{2}}(r_{\lambda}(y\theta^{-1})-r_{\lambda}((y+\pi)\theta^{-1}))\cos(y)dy\right|\\
 			&\nonumber\leq\int_{0}^{\frac{\pi}{2}}r_{\lambda}(y\theta^{-1})dy\\
 			&\leq C|\theta|^{\beta(\Pi)+\eps-1}
		\end{align*}
		for large $|\theta|$ and precisely as above for small $|\theta|$. This finishes the proof of the lemma.
		\end{proof}
	The upper bound can now be used to derive the necessary integrability of $g^{N,q}$.	
	\begin{lemma}\label{l434}
		For arbitrary integer $N$ larger than $0$ and any $\lambda>0$ we have
  		\begin{equation}\label{LEq3}
			\int_{-\infty}^{\infty}\left|g^{N,q}(\lambda+i\theta)\right|\,d\theta <\infty
 		\end{equation}
 		for $g^{N,q}$ defined in (\ref{g}).
	\end{lemma}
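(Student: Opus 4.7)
The plan is to decompose the real line into a compact bulk $\{|\theta|\le M\}$ and a tail $\{|\theta|>M\}$, and to estimate the integrand separately on each piece. Writing
\[
\bigl|g^{N,q}(\lambda+i\theta)\bigr|=\frac{\bigl|\mathcal L(\bar\Pi+q)(\lambda+i\theta)\bigr|^N}{|\lambda+i\theta|\,\delta^{N+1}\,\bigl|1+\tfrac{1}{\delta}\mathcal L(\bar\Pi+q)(\lambda+i\theta)\bigr|},
\]
the task reduces to controlling the three factors numerator, modulus $|\lambda+i\theta|$, and denominator $|1+\tfrac{1}{\delta}\mathcal L(\bar\Pi+q)(\lambda+i\theta)|$ from above/below.

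For the tail I would first write $\mathcal L(\bar\Pi+q)(\lambda+i\theta)=\mathcal L\bar\Pi(\lambda+i\theta)+q/(\lambda+i\theta)$; the $q$-contribution decays like $1/|\theta|$, so it is absorbed by Lemma \ref{l43}, and, since Assumption \textbf{(A)} permits $\epsilon>0$ with $\beta(\Pi)+\epsilon<1$,
\[
\bigl|\mathcal L(\bar\Pi+q)(\lambda+i\theta)\bigr|\le C|\theta|^{\beta(\Pi)+\epsilon-1}\qquad(|\theta|>1).
\]
In particular this tends to $0$ as $|\theta|\to\infty$, so I may choose $M\ge 1$ large enough that $|1+\tfrac{1}{\delta}\mathcal L(\bar\Pi+q)(\lambda+i\theta)|\ge 1/2$ for $|\theta|>M$. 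Combined with $|\lambda+i\theta|\ge|\theta|$, the integrand on $\{|\theta|>M\}$ is then bounded by a constant times $|\theta|^{N(\beta(\Pi)+\epsilon-1)-1}$. Because $N\ge 1$ and $\beta(\Pi)+\epsilon-1<0$, the exponent is strictly less than $-1$, giving integrability of the tail.

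For the bulk $\{|\theta|\le M\}$ the numerator is continuous in $\theta$ (dominated convergence applied to $e^{-\lambda x}(\bar\Pi+q)(x)$, which is integrable since $\bar\Pi$ is locally integrable near $0$ and decays), hence bounded; and $|\lambda+i\theta|\ge\lambda>0$. The one nontrivial point is a positive lower bound for the denominator on this compact set. Here I invoke Lemma \ref{l11}, which says that $1+\tfrac{1}{\delta}\mathcal L(\bar\Pi+q)(\lambda+i\theta)$ never vanishes on the half-plane $\{\mathrm{Re}\,s>0\}$; together with the continuity of $\theta\mapsto\mathcal L(\bar\Pi+q)(\lambda+i\theta)$, compactness yields a constant $c(\lambda,M)>0$ such that the denominator exceeds $c(\lambda,M)$. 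The integrand is then bounded on $\{|\theta|\le M\}$ and trivially integrable there.

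The main obstacle is precisely this uniform lower bound on $|1+\tfrac{1}{\delta}\mathcal L(\bar\Pi+q)(\lambda+i\theta)|$ in the bulk: absence of zeros (Lemma \ref{l11}) must be combined with a compactness-continuity argument, and it is at this point that the construction breaks down on the boundary $\lambda=0$, explaining why one keeps $\lambda>0$ throughout. Assumption \textbf{(A)} enters only in the tail, supplying the decay exponent needed to offset the extra $1/|\lambda+i\theta|$ factor that distinguishes $g^{N,q}$ from $h^{N,q}$.
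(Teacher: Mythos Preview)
Your proposal is correct and follows essentially the same route as the paper: both arguments bound the denominator $1+\tfrac{1}{\delta}\mathcal L(\bar\Pi+q)(\lambda+i\theta)$ away from zero by combining Lemma~\ref{l11} (no zeros) with continuity and the decay $\mathcal L(\bar\Pi+q)(\lambda+i\theta)\to 0$ as $|\theta|\to\infty$, and then use Lemma~\ref{l43} together with $|1/(\lambda+i\theta)|\le\min\{1/\lambda,1/|\theta|\}$ to obtain the integrable majorant $C|\theta|^{N(\beta(\Pi)+\eps-1)-1}$ on the tail. The only cosmetic difference is that the paper packages the bulk and tail into a single two-case bound (display~(\ref{he})) with cutoff at $|\theta|=1$, whereas you choose $M$ after the fact to force the denominator above $1/2$ on the tail; the content is identical.
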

	
	\begin{proof}
		As we have already found a good upper bound for $\mathcal L \bar\Pi(s)$ in the previous lemma, it suffices to show that the denominator
		\begin{align*}
			p(\lambda+i\theta)=1+\mathcal L (\bar\Pi+q)(\lambda+i\theta)
		\end{align*}
		is bounded away from zero. In Lemma \ref{l11} we have shown that $p(\lambda+i\theta)$ has no zeros for $\lambda\geq 0$ and, hence, by continuity of $p$ it suffices to show that as $|\theta|$ tends to infinity $p(\lambda+i\theta)$ stays bounded away from zero. To this end it suffices to note that from Lemma \ref{l43}
		\begin{align*}
			\lim_{|\theta|\to \infty}\mathcal L (\bar\Pi+q)(\lambda+i\theta)=\lim_{|\theta|\to \infty}\Big(\mathcal L \bar\Pi(\lambda+i\theta)+\frac{q}{\lambda+i\theta}\Big)=0.
		\end{align*}
		 Using the fact $|\mathcal L \1(\lambda+i\theta)|=|1/(\lambda+i\theta)|\leq \min\big\{\frac{1}{\lambda},\frac{1}{|\theta|}\big\}$ for $\lambda>0$, we employ Lemma \ref{l43} to obtain the upper bound
		\begin{align}
			\left|g^{N,q}(\lambda+i\theta)\right|&\leq C|\mathcal L \1(\lambda+i\theta)||\mathcal L (\bar\Pi+q)(\lambda+i\theta)|^N\nonumber\\
			&=C\left|\frac{1}{\lambda+i\theta}\right|\left|\mathcal L \bar\Pi(\lambda+i\theta)+\frac{q}{\lambda+i\theta}\right|^N
			\leq  \begin{cases}
					C'\frac{1}{\lambda^{N+1}}&:|\theta|\leq 1,\\
       					C'|\theta|^{N(\beta(\Pi)+\eps-1)-1}&:|\theta|>1.
			             \end{cases}\label{he}
		\end{align}
		The right hand side is integrable in $\theta$ as by assumption $\beta(\Pi)<1$ and $\eps$ can be chosen sufficiently small so that $N(\beta(\Pi)+\eps-1)<0$.
	\end{proof}

	We are now in a position to derive the Laplace inversion representations for $u^{(q)}$.
	
	\begin{proof}[Proof of Proposition \ref{laplaceinversionu}:]
		For $\lambda\geq \lambda_0$, $\lambda_0$ satisfying $\int_0^{\infty}e^{-\lambda_0 x}(\bar\Pi+q)(x)\,dx<1$ we can directly follow the strategy explained before Lemma \ref{l33}. The proof then follows directly from the definition of $\phi^{N,q}$ and Laplace inversion justified by Lemmas \ref{l33}, \ref{l11}, and \ref{l434}.\\
		The proof of the proposition is complete if we can show that for arbitrary $0<\lambda<\lambda_0$
		\begin{align*}
			\int_{\Gamma(\lambda)}e^{s x}g^{N,q}(s)\,ds=\int_{\Gamma(\lambda_0)}e^{s x}g^{N,q}(s)\,ds,
		\end{align*}
		where $\Gamma(\lambda)=\{\lambda+i\theta: \theta\in\R\}$. Laplace transforms are analytic (see for instance Theorem 75.2 of \cite{K88}) and $g^{N,q}$ has no singularity for $\lambda>0$ by Lemma \ref{l11}, hence, Cauchy's theorem applied to the closed contour formed by the pieces
		\begin{align*}
			\Gamma(\lambda_{0})&\cap\{|\theta|\leq R\},\\
			\Gamma(\lambda)&\cap\{|\theta|\leq R\},\\
			\Phi(R)&=\big\{s:\,s=r+iR, r\in [\lambda,\lambda_0]\big\},\\
			\tilde{\Phi}(R)&=\big\{s:\,s=r-iR, r\in [\lambda,\lambda_0]\big\},
		\end{align*}
		taken with the right orientation implies the claim. Note that the integrals over the horizontal pieces vanish as $R$ tends to infinity:
		\begin{align*}
			\lim_{R\to\infty}\Big|\int_{\Phi(R)}e^{sx}g^{N,q}(s)ds\Big|\leq Ce^{\lambda_0 x}(\lambda_{0}-\lambda)\lim_{R\to\infty}|R|^{N(\beta(\Pi)+\eps-1)-1},
		\end{align*}
		where we have used (\ref{he}) for $|\theta|>1$ to estimate $|g^{N,q}(s)|$ for $R$ big enough. Choosing $\eps$ small enough so that $\beta(\Pi)+\eps-1<0$, the right hand side tends to zero. The same argument shows that 		the integral over $\tilde{\Phi}(R)$ vanishes.
	\end{proof}

	\begin{proof}[Proof of Corollary \ref{t2}:]
		As remarked after the corollary, the arguments of \cite{CKS10} will not be repeated. Instead, under Assumption (\textbf{A}) we prove the Laplace inversion representation and deduce from this the series representation of \cite{CKS10}.\\
		We first show that the right and left derivatives of $u^{(q)}$ exist and are given by the representation of the theorem. First, right and left derivatives of the finite sum in (\ref{invers}) exist by termwise differentiating the finite sum and 		using that $\1\ast \big(\bar\Pi+q\big)^{\ast n}(x)=\int_0^x\big(\bar\Pi+q\big)^{\ast n}(y)\,dy$ is differentiable from the left and the right with derivative $\big(\bar\Pi+q\big)^{\ast n}(x-)$ (resp. $\big(\bar\Pi+q\big)^{\ast n}(x+)$). As 		iterated convolutions are continuous, only the first summand is not everywhere differentiable.\\		
		To see that the integral is differentiable at $x$ and to deduce the integral representation of $({u^{(q)}})'$ note that
		\begin{align*}
			\frac{d}{dx}e^{\lambda x}\frac{1}{2\pi}\int_{-\infty}^{\infty}e^{i\theta x}g^{N,q}(\lambda+i\theta)\,d\theta
			&=e^{\lambda x}\frac{1}{2\pi}\int_{-\infty}^{\infty}(\lambda+i\theta)e^{i\theta x}g^{N,q}(\lambda+i\theta)\,d\theta\\
			&=e^{\lambda x}\frac{1}{2\pi}\int_{-\infty}^{\infty}e^{i\theta x}h^{N,q}(\lambda+i\theta)\,d\theta.
		\end{align*}
		The differentiation under the integral is justified by dominated convergence and the upper bound
		\begin{align*}
			\left|\frac{d}{dx}e^{i\theta x}g^{N,q}(\lambda+i\theta)\right|=\left|\theta g^{N,q}(\lambda+i\theta)\right|\leq  \begin{cases}
 					C|\theta|\frac{1}{\lambda^{N+1}}&:|\theta|\leq 1,\\
					C|\theta|^{N(\beta(\Pi)+\eps-1)}&:|\theta|>1,\\
			             \end{cases}	
		\end{align*}
		derived in (\ref{he}) which is integrable in $\theta$ for sufficiently small $\eps$ by our choice of $N$.\medskip
		
		As a second step we now derive the pointwise series representation from the Laplace transform representation of $({u^{(q)}})'$. As for (\ref{he}) we obtain the upper bound
		\begin{align}\label{he2}
			\left|h^{N,q}(\lambda+i\theta)\right|\leq C|\mathcal L \bar\Pi(\lambda+i\theta)|^N\leq  \begin{cases}
					C\frac{1}{\lambda^{N}}&:|\theta|\leq 1,\\
       					C|\theta|^{N(\beta(\Pi)+\eps-1)}&:|\theta|>1,
			             \end{cases}		
		\end{align}
		for arbitrary $\eps>0$.
		To prove the corollary it suffices to show that for $N$ tending to infinity, the Laplace inversion integral
		\begin{align*}
			e^{\lambda x}\frac{1}{2\pi}\int_{-\infty}^{\infty}e^{i\theta x}h^{N,q}(\lambda+i\theta)\,d\theta
		\end{align*}
		vanishes for fixed $x>0$ and $\lambda>0$. With our choice of $\varepsilon$, i.e. $\beta(\Pi)+\eps-1<0$,  (\ref{he2}) implies pointwise convergence $e^{i\theta x}h^{N,q}(\lambda+i\theta)\to 0$. We are done if we can justify the change of limit and integration. This comes from the uniform (for $N\geq 2[(\beta(\Pi)+\eps-1)^{-1}]+2$ ) in $\theta$ integrable upper bound
		\begin{align*}
			\left|e^{i\theta x}h^{N,q}(\lambda+i\theta)\right|\leq  \begin{cases}
					C&:|\theta|\leq 1,\\
       					C|\theta|^{-2}&:|\theta|>1,
			             \end{cases}		
		\end{align*}
		and the dominated convergence theorem.		
	\end{proof}


\subsection{Higher Order (Non)Differentiability}
	In this section the results on differentiability are proved. In contrast to Corollary \ref{uu}, which follows either from differentiating the Laplace inversion representation or differentiating termwise the series representation of $u^{(q)}$, the proofs for higher order derivatives are exclusively based on the more elegant Laplace inversion approach. This forces us to assume $\beta(\Pi)<1$ and we do not see how to circumvent this (probably dispensable) restriction.\\
	To reduce the proofs to (non)differentiability of iterated convolutions, differentiability of the Laplace inversion integral is ensured in the next lemma if $N$ is sufficiently large.
	
	\begin{lemma}\label{l111}
		For $N>-\frac{k}{\beta(\Pi)-1}$, the Laplace inversion integral in (\ref{invers}) is everywhere $k$-times continuously differentiable in $x$ with
		\begin{align*}
			\frac{d^k}{dx^k}e^{\lambda x}\frac{1}{2\pi}\int_{-\infty}^{\infty}e^{i\theta x}g^{N,q}(\lambda+i\theta)\,d\theta=\frac{1}{2\pi}\int_{-\infty}^{\infty}e^{(\lambda+i\theta) x}(\lambda+i\theta)^{k}g^{N,q}(\lambda+i\theta)\,d\theta.
		\end{align*}
	\end{lemma}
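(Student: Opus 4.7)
The plan is to prove the lemma by induction on $k$, with the inductive step handled via a standard differentiation-under-the-integral-sign argument based on dominated convergence. The base case $k=0$ is immediate (indeed, absolute integrability of $g^{N,q}(\lambda + i\theta)$ in $\theta$ was established in Lemma \ref{l434}), so the substance is in the inductive step: assuming that the $(k-1)$st derivative is represented by an integral with integrand $e^{(\lambda+i\theta)x}(\lambda+i\theta)^{k-1}g^{N,q}(\lambda+i\theta)$, one wants to differentiate once more in $x$ under the integral. The formal derivative of the integrand with respect to $x$ is $e^{(\lambda+i\theta)x}(\lambda+i\theta)^{k}g^{N,q}(\lambda+i\theta)$, so one needs a $\theta$-integrable, $x$-uniform (on compacts) dominating function.

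The required bound is supplied by the estimate (\ref{he}) obtained in the course of proving Lemma \ref{l434}, namely
\begin{align*}
|g^{N,q}(\lambda+i\theta)| \leq \begin{cases} C'\lambda^{-N-1}, & |\theta|\leq 1,\\ C'|\theta|^{N(\beta(\Pi)+\eps-1)-1}, & |\theta|>1,\end{cases}
\end{align*}
valid for every $\eps>0$ with $\beta(\Pi)+\eps<1$. Multiplying by $|(\lambda+i\theta)^k|$, which is bounded on $|\theta|\leq 1$ and behaves like $|\theta|^k$ at infinity, gives an $x$-independent bound of order $|\theta|^{k+N(\beta(\Pi)+\eps-1)-1}$ for large $|\theta|$. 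Integrability at infinity requires $k+N(\beta(\Pi)+\eps-1)-1<-1$, i.e.\ $N > k/(1-\beta(\Pi)-\eps)$. Since by hypothesis $N > -k/(\beta(\Pi)-1) = k/(1-\beta(\Pi))$ and the map $\eps \mapsto k/(1-\beta(\Pi)-\eps)$ is continuous at $0$, we can fix $\eps>0$ small enough that $N > k/(1-\beta(\Pi)-\eps)$ and simultaneously $\beta(\Pi)+\eps<1$; on any compact $x$-interval the factor $e^{\lambda x}$ is bounded, so the dominating function is locally uniform in $x$ as required. This justifies the interchange, yielding the claimed formula for the $k$th derivative.

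Continuity of the $k$th derivative in $x$ then follows by one further application of dominated convergence: for $x_n \to x$, the integrand $e^{(\lambda+i\theta)x_n}(\lambda+i\theta)^k g^{N,q}(\lambda+i\theta)$ converges pointwise to its value at $x$, and the same dominating function (on any compact interval containing $\{x_n\}$ and $x$) controls the convergence.

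I do not expect any real obstacle beyond the bookkeeping: all ingredients — the sharp upper bound on $g^{N,q}$, the freedom to choose $\eps$ small, and the translation of the hypothesis $N > -k/(\beta(\Pi)-1)$ into an integrability exponent — are already in place from the earlier lemmas. The only delicate point is to ensure that the factor $|\theta|^k$ gained from each differentiation is absorbed by the decay of $g^{N,q}$, and this is precisely what the hypothesis on $N$ guarantees.
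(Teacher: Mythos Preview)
Your proposal is correct and follows essentially the same approach as the paper: both arguments differentiate under the integral via dominated convergence, using the bound (\ref{he}) on $g^{N,q}$ to obtain an integrable majorant of order $|\theta|^{k+N(\beta(\Pi)+\eps-1)-1}$ at infinity, and then observe that the hypothesis $N>-k/(\beta(\Pi)-1)$ permits a choice of $\eps$ making this exponent strictly less than $-1$. Your write-up is somewhat more detailed (explicit induction, explicit treatment of continuity), but the content is identical.
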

	\begin{proof}
		To check that we can differentiate
		\begin{align*}
			e^{\lambda x}\frac{1}{2\pi}\int_{-\infty}^{\infty}e^{i\theta x}g^{N,q}(\lambda+i\theta)\,d\theta
		\end{align*}
		$k$-times under the integral, by dominated convergence we need to find an integrable upper bound for the derivative:
		\begin{align*}
			\Big|\frac{d^k}{dx^k}e^{i\theta x}g^{N,q}(\lambda+i\theta)\Big|=\Big|\theta^kg^{N,q}(\lambda+i\theta)\Big|.
		\end{align*}
		Integrability follows directly from the choice of $N$ and the integrable upper bound (\ref{he}) for sufficiently small $\eps$.
	\end{proof}
	
	Having understood differentiability of the remainder term, to prove higher order differentiability of $u^{(q)}$ we choose $N$ large enough to apply the previous lemma and then deal with the finite sum of convolutions. Here is a lemma for smooth L\'evy measures.
	\begin{lemma}\label{lem:dif}
		If $f:\R\to\R^+$ with $f(x)=0$ for $x\leq 0$ is infinitely differentiable on $\R^+$ and locally integrable at zero, then $f^{\ast n}$ is everywhere infinitely differentiable and integrable at zero for any $n\geq 1$.
	\end{lemma}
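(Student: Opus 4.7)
The plan is to argue by induction on $n$, splitting each convolution integral symmetrically at the midpoint $x/2$ so that one of the two factors is always evaluated where it is smooth, and the other, possibly singular, factor appears only under an integral sign.

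The base case $n=1$ is the hypothesis on $f$. For the induction step, assume $f^{\ast(n-1)}$ is smooth on $(0,\infty)$ and locally integrable at $0$, and fix $x>0$. The first step is to write, using the substitution $y\mapsto x-y$ on the second half of the interval,
\begin{equation*}
f^{\ast n}(x)=\int_{0}^{x/2}f(x-y)f^{\ast(n-1)}(y)\,dy+\int_{0}^{x/2}f(y)f^{\ast(n-1)}(x-y)\,dy.
\end{equation*}
In the first integral the argument $x-y$ lies in $[x/2,x]$ and is thus bounded away from $0$ as $x$ varies in a small neighborhood of any $x_{0}>0$, so $y\mapsto f(x-y)$ is smooth in $x$ there; the only possibly singular factor $f^{\ast(n-1)}(y)$ is merely integrated, and it is locally integrable by the inductive hypothesis. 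The second integral enjoys the same structure with the roles of $f$ and $f^{\ast(n-1)}$ swapped, where again by induction $f^{\ast(n-1)}(x-y)$ is smooth on $[x/2,x]$.

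The second step is to differentiate $k$ times by Leibniz's rule. Each differentiation produces one boundary contribution at $y=x/2$ (where both factors are smooth, so these terms are smooth in $x$) and one integral with an additional derivative placed on the smooth factor, while the possibly singular factor is left untouched. Iterating this procedure any number of times shows that $f^{\ast n}$ is $k$-times continuously differentiable on $(0,\infty)$ for every $k$, and hence infinitely differentiable there.

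For local integrability at $0$, apply Fubini once:
\begin{equation*}
\int_{0}^{a}f^{\ast n}(x)\,dx=\int_{0}^{a}f^{\ast(n-1)}(y)\int_{0}^{a-y}f(z)\,dz\,dy\leq\Big(\int_{0}^{a}f(z)\,dz\Big)\Big(\int_{0}^{a}f^{\ast(n-1)}(y)\,dy\Big),
\end{equation*}
which is finite by induction. The only genuine point to check carefully is that the symmetric split legitimately removes every appearance of a possibly singular argument from the factor being differentiated; once that observation is in place the Leibniz calculation is routine and the smoothness follows for every order $k$.
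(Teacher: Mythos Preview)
Your proof is correct and follows essentially the same approach as the paper: both use induction with the symmetric midpoint split of the convolution integral so that the factor being differentiated is always evaluated in $[x/2,x]$, away from the origin, and both verify local integrability at zero by the same Fubini estimate. Your write-up is in fact slightly more explicit than the paper's about the Leibniz boundary terms coming from the variable upper limit $x/2$, which the paper leaves implicit.
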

	\begin{proof}
	The proof is easily conducted by induction with basis $n=1$, i.e. $f(x)$. Then the simple identity
	\begin{align*}
		f^{\ast (n+1)}(x)=\int_{0}^{\frac{x}{2}}f^{\ast n}(x-y)f(y)dy+\int_{0}^{\frac{x}{2}}f^{\ast n}(y)f(x-y)dy
	\end{align*}
	and the induction hypothesis confirm the statement of the lemma with respect to differentiability. The integrability follows similarly from the representation
	\[\int_{0}^{1}f^{*(n+1)}(x)dx=\int_{0}^{1}f^{*n}(y)\int_{0}^{1-y}f(s)dsdy\]
	and the integrability of $f(x)$ at zero.
	\end{proof}

	Combining the previous lemmas we can prove Theorem \ref{t12}.

	\begin{proof}[Proof of Theorem \ref{t12}:]
		As the potential measure $U^{(q)}$ is differentiable with derivative $u^{(q)}$, to differentiate $(k+1)$-times $U^{(q)}$ it suffices to differentiate $k$-times the potential density $u^{(q)}$. Applying Proposition \ref{laplaceinversionu} with $N>-\frac{k}{\beta(\Pi)-1}$ yields
 		\begin{align*}
			\frac{d^{k+1}}{dx^{k+1}}U^{(q)}(x)&=\frac{d^k}{dx^k}u^{(q)}(x)\\
			&=\frac{d^{k}}{dx^{k}}\sum_{n=1}^{N-1}\frac{(-1)^n}{\delta^{n+1}}\1\ast(\bar\Pi+q)^{\ast n}(x)
			+\frac{d^{k}}{dx^{k}}e^{\lambda x}\frac{1}{2\pi}\int_{-\infty}^{\infty}e^{i\theta x}g^{N,q}(\lambda+i\theta)\,d\theta.
		\end{align*}
		As $N$ was assumed to be large enough, the integral is everywhere $k$-times differentiable by Lemma \ref{l111}. For $\beta(\Pi)$ close to $1$ the sum might be arbitrarily large but is always finite. So we can differentiate termwise the sum and use $\frac{d}{dx}\1\ast(\bar\Pi+q)^{\ast n}(x)=(\bar\Pi+q)^{\ast n}(x)$ as $\bar\Pi$ is continuous leading to
		\begin{align}\label{F}
			 \frac{d^{k+1}}{dx^{k+1}}U^{(q)}(x)=\sum_{n=1}^{N-1}\frac{(-1)^n}{\delta^{n+1}}\frac{d^{k-1}}{dx^{k-1}}(\bar\Pi+q)^{\ast n}(x)
			+e^{\lambda x}\frac{1}{2\pi}\int_{-\infty}^{\infty}e^{i\theta x}(\lambda+i\theta)^{k}g^{N,q}(\lambda+i\theta)\,d\theta.
		\end{align}
		Applying Lemma \ref{lem:dif} to each summand of the finite sum concludes the proof.
	\end{proof}
	
	The proof revealed the full strength of the Laplace inversion approach compared to the series approach of \cite{CKS10}. Their major technical problems consist of justifying differentiation under the alternating infinite sum (\ref{series}). As we split the infinite sum into a harmless finite sum and an integral which can be dealt with easily, the main problems of \cite{CKS10} have been circumvented.\medskip
		
	Next we analyze the influence of atoms of $\Pi$ on (non)differentiability for which we start with a lemma on higher order convolutions for discrete L\'evy measures. 
		  %
	
	\begin{lemma}\label{l}
		Suppose $\Pi$ is purely atomic with possible accumulation point of atoms only at zero, then
		\begin{itemize}
			\item[(a)] $\bar\Pi^{\ast n}$ is a polynomial of order at most $n-1$ away from $G_n$ for $n\geq 1$,
			\item[(b)] $\bar\Pi^{\ast n}$ is everywhere $(n-2)$-times differentiable but not $(n-1)$-times differentiable at $G_n\backslash G_{n-1}$ for $n\geq 2$.
		\end{itemize}
	\end{lemma}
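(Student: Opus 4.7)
My plan is to prove both statements simultaneously by induction on $n$, using the recursion
\[
\bar\Pi^{\ast(n+1)}(x) = \sum_{g\in G} p_g\bigl[\tilde F_n(x) - \tilde F_n\bigl((x-g)_+\bigr)\bigr],\qquad \tilde F_n(x):=\int_0^x \bar\Pi^{\ast n}(y)\,dy,
\]
obtained by substituting $\bar\Pi(y) = \sum_g p_g \mathbf 1_{[0,g]}(y)$ into $\bar\Pi^{\ast(n+1)}=\bar\Pi\ast\bar\Pi^{\ast n}$ and applying Fubini. A preliminary remark is that, because $G$ accumulates only at zero, $G_n$ is closed in $(0,\infty)$: a sequence $x_m\in G_n$ with $x_m \to x>0$ is bounded, hence the summands in the decompositions $x_m=g_1^m+\cdots+g_{j_m}^m$ are bounded, so a diagonal extraction (using closedness of $G\cup\{0\}$) exhibits $x$ as a sum of at most $n$ atoms. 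The components of $\R^+\setminus G_n$ in (a) are therefore genuine open intervals.

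For (a), the base $n=1$ is immediate since $\bar\Pi$ is constant on each component of $\R^+\setminus G_1$. For the inductive step, fix $x\notin G_{n+1}$; then $x\notin G_n$ and for every $g\in G$ with $g<x$ one has $x-g\notin G_n$ as well (otherwise $x=g+(x-g)\in G_{n+1}$). By the inductive hypothesis $\tilde F_n$ is a polynomial $P$ of degree $\leq n$ on an interval $I\ni x$ and on a neighbourhood of each $x-g$. I split the atoms into three groups: $g>x$ (contributing $\bar\Pi(x)\,P(x')$, a polynomial); $\eps_0\leq g<x$ for a small fixed $\eps_0$ (finitely many by the no-accumulation assumption, each contributing a polynomial of degree $\leq n$); and $g<\eps_0$ with $\eps_0$ chosen so that $x-g\in I$ for all such $g$. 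For the small-atom part, Taylor-expand
\[
P(x')-P(x'-g) \;=\; \sum_{k=1}^n \tfrac{(-1)^{k+1}}{k!}\,g^k P^{(k)}(x')
\]
and interchange the $g$- and $k$-sums to obtain $\sum_{k=1}^n \tfrac{(-1)^{k+1}}{k!} P^{(k)}(x')\,c_k$ with $c_k:=\sum_{g<\eps_0} p_g g^k$. Here $c_1<\infty$ by $\int_0^1 y\,\Pi(dy)<\infty$ and $c_k\leq \eps_0^{k-1}c_1<\infty$ for $k\geq 2$; the rearrangement is legitimate because the $k$-sum is finite. The total is once more a polynomial in $x'$ of degree $\leq n$, so (a) holds at step $n+1$.

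For (b), the inductive hypothesis $\bar\Pi^{\ast n}\in C^{n-2}(\R^+)$ gives $\tilde F_n\in C^{n-1}(\R^+)$, and $k$-fold termwise differentiation of the recursion for $k\leq n-1$---justified by the same large/small split together with $\sum_g p_g g<\infty$---produces $\bar\Pi^{\ast(n+1)}\in C^{n-1}(\R^+)$. At a point $x\in G_{n+1}\setminus G_n$, every decomposition $x=g+y$ with $g\in G, y\in G_n$ must have $y\in G_n\setminus G_{n-1}$ (else $x\in G_n$, a contradiction). By the inductive (b), the left and right $(n-1)$-th derivatives of $\bar\Pi^{\ast n}$ at $y$ differ by a jump of definite sign $(-1)^n$ and magnitude $\Pi^{\ast n}(\{y\})=\sum_{g_1+\cdots+g_n=y}p_{g_1}\cdots p_{g_n}>0$, the sign being propagated through the induction. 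The corresponding term in the recursion contributes $-p_g\cdot(-1)^n\Pi^{\ast n}(\{x-g\})$ to the jump of the $n$-th derivative of $\bar\Pi^{\ast(n+1)}$ at $x$; summing over admissible $g$ yields a total jump of $(-1)^{n+1}\Pi^{\ast(n+1)}(\{x\})$, which is strictly non-zero because $x\in G_{n+1}$ admits an explicit $(n+1)$-fold atomic decomposition. The principal obstacles are the sign bookkeeping---ensuring that contributions from different decompositions of the same $x$ do not cancel, which is settled by the uniform sign $(-1)^n$ of the inductive jumps---and the interchange of infinite $g$-sums with differentiation, which is handled throughout by the Lévy moment $\int_0^1 y\,\Pi(dy)<\infty$.
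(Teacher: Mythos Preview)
Your proof is correct and follows essentially the same strategy as the paper: both expand $\bar\Pi$ as $\sum_{g\in G}\Pi(\{g\})\mathbf 1_{[0,g]}$, write $\bar\Pi^{\ast(n+1)}=\sum_g \Pi(\{g\})\,\mathbf 1_{[0,g]}\ast\bar\Pi^{\ast n}$, and run an induction in which the atom sum is split into a finite ``large'' part and an infinite ``small'' part controlled via $\int_0^1 y\,\Pi(dy)<\infty$.

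Two minor differences are worth recording. For (a), you dispose of the small-atom tail by exactly Taylor-expanding the local polynomial $P$; the paper instead bounds $\int_{z-a}^z\bar\Pi^{\ast n}(y)\,dy$ directly. Both work, yours is a touch cleaner. For (b), you track the jump of the $(n-1)$st derivative of $\bar\Pi^{\ast n}$ at $y\in G_n\setminus G_{n-1}$ explicitly as $(-1)^n\Pi^{\ast n}(\{y\})$; this alternating sign is correct and is what prevents cancellation in the sum over decompositions. The paper's inductive hypothesis states the jump is always strictly positive, which is a sign slip in the general step (for $\Pi=\delta_1$ one computes directly that the jump of $(\bar\Pi^{\ast 3})''$ at $3$ is $-1$). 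The conclusion of the lemma is unaffected since only non-vanishing of the jump is needed, but your bookkeeping is the right one.
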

	\begin{proof}
		The proof is based on the simple observation
		\begin{align}
			\1_{[0,a]}\ast f(x)=\begin{cases}
						\int_{x-a}^xf(y)\,dy&:a<x,\\
			                     	\int_0^xf(y)\,dy&:a\geq x,
			                    \end{cases}\label{mm}
		\end{align}
		for integrable $f$ vanishing on the negative half-line. Hence, $\1_{[0,a]}\ast f$ is continuous everywhere and differentiable at $x$ and $x+a$ if and only if $f$ is continuous at $x$.
		We will resort to the fact that as $\Pi$ is purely atomic, $\bar\Pi=\Pi(x,\infty)$ is piecewise constant and can be represented as linear combination of step functions, i.e.
		\begin{align}
			\bar\Pi(x)=\sum_{a\in G} \Pi(\{a\})\mathbf{1}_{[0,a]}(x).\label{ho}
		\end{align}
		To prove (a) we proceed by induction which we start with $n=1$. Taking into account (\ref{ho}), $\bar\Pi^{\ast 1}=\bar\Pi$ is a polynomial of order $0$ away from $G$. Next, assume that $\bar\Pi^{\ast n}$ is a polynomial of order at most $n-1$ away from $G_n$.
		To prove the claim for $\bar\Pi^{\ast(n+1)}$, we fix an interval $(d,b)$ such that $d\in G_{n+1}\cup\{0\}$, $b\in G_{n+1}$ and $(d,b)\cap G_{n+1}=\emptyset$. Our aim is to show that for every $x\in (d,b)$ there is an open interval $\Delta(x)\subset (d,b)$ such that $\bar\Pi^{\ast(n+1)}$ is polynomial of order at most $n$ on $\Delta(x)$. Fix $x\in (d,b)$ and choose $a^{*}(x)=\max\big\{c\in G\cup \{0\}\,:\,x-c>d\big\}$. As the atoms accumulate only at zero there is a neighbourhood $\Delta(x)$ of $x$ such that $z-a^{*}(x)>d$ for every $z\in \Delta(x)$. Next, by Fubini's theorem we observe
		\begin{align}\label{hie}
			\bar\Pi^{\ast (n+1)}(x)=\sum_{a\in G}\Pi(\{a\})\1_{[0,a]}\ast \bar\Pi^{\ast n}(x).
		\end{align}
		By the induction hypothesis and (\ref{mm}) all summands are locally polynomials of order at most $n$. To show that $\bar\Pi^{\ast (n+1)}$ is a polynomial of order at most $n$, we split $\bar\Pi^{\ast n}$ according to the two cases of (\ref{mm}):
		\begin{align*}
		\bar\Pi^{\ast (n+1)}(x)&=\sum_{x\leq a\in G}\Pi(\{a\})\1_{[0,a]}\ast \bar\Pi^{\ast n}(x)+\sum_{x>a\in G}\Pi(\{a\})\1_{[0,a]}\ast \bar\Pi^{\ast n}(x)\\
		&=\int_0^x\bar\Pi^{\ast n}(y)\,dy\sum_{x\leq a\in G}\Pi(\{a\})+\sum_{x>a\in G}\Pi(\{a\})\int_{x-a}^x\bar\Pi^{\ast n}(y)\,dy.
		\end{align*}
		The first summand clearly is a polynomial of order at most $n$ on $\Delta(x)$ by the induction hypothesis using that $\sum_{a\geq x}\Pi(\{a\})$ is a constant on $\Delta(x)$. To show that the second summand is a polynomial as well, we write for all $z\in \Delta(x)$:
		\begin{align*}
		&\sum_{z>a\in G}\Pi(\{a\})\int_{z-a}^z\bar\Pi^{\ast n}(y)\,dy\\
		&=\sum_{a^{*}(x)\geq a\in G}\Pi(\{a\})\int_{z-a}^z\bar\Pi^{\ast n}(y)\,dy+\sum_{a^{*}(x)<a<z, a\in G}\Pi(\{a\})\int_{z-a}^z\bar\Pi^{\ast n}(y)\,dy.
		\end{align*}
		We clearly deduce by the induction hypothesis that the second sum, having finitely many summands only, is a polynomial of order at most $n$ on $\Delta(x)$. By the definition of $a^{*}(x)$ and the induction hypothesis, $\bar\Pi^{\ast n}$ is the same polynomial on $(d,b)$. Also, since for $z-a>d$
		\begin{align*}
			\int_{z-a}^z\bar\Pi^{\ast n}(y)\,dy\leq a \sup_{s\in (d,b)}|\bar\Pi^{\ast n}(s)|\leq Ca,
		\end{align*}
		the first sum is clearly absolutely summable and henceforth it defines a polynomial of order at most $n$. Thus we conclude that on $\Delta(x)$ we have that
		$\bar\Pi^{\ast (n+1)}$ is a polynomial of order at most $n$. Representing $(b,d)$ as a union of neighbourhoods $\Delta(x)$ shows that $\bar\Pi^{\ast (n+1)}$ is indeed a polynomial of order at most $n$ on $(b,d)$.\\
	
		In particular, (a) shows that $\bar\Pi^{\ast n}$ is infinitely differentiable away from $G_n$. To prove the claimed non-differentiability in (b), a different approach is needed. We prove the assertion again by induction in $n$. The first step is to show that $\bar\Pi^{\ast 2}$ is everywhere continuous and not differentiable at $G_2\backslash G$. Continuity follows from the continuity of $\1_{[0,a]}\ast \bar\Pi(x)$ and the locally uniform convergence of (\ref{hie}) which can, applying monotonicity of $\bar\Pi$ and the properties of $\Pi$, be seen for $\eps$ small enough and $N$ large enough from
		\begin{align*}
			&\sup_{x}\left(\sum_{\eps>a\in G}\Pi(\{a\})\1_{[0,a]}\ast \bar\Pi(x)+\sum_{N<a\in G}\Pi(\{a\})\1_{[0,a]}\ast \bar\Pi(x)\right)\\
			&=\sup_{x}\left(\sum_{\eps>a\in G}\Pi(\{a\})\int_{x-a}^x\bar\Pi(y)\,dy+\sum_{N<a\in G}\Pi(\{a\})\int_0^x\bar\Pi(y)\,dy\right)\\
			&\leq\sup_{x}\left(\sum_{\eps>a\in G}\Pi(\{a\})a\bar\Pi(x-\eps)+\int_0^x\bar\Pi(y)\,dy\sum_{N<a\in G}\Pi(\{a\})\right)\\
			&\leq\sup_xC(x)\left(\sum_{\eps>a\in G}\Pi(\{a\})a+\sum_{N<a\in G}\Pi(\{a\})\right)\\
			&\stackrel{\eps\to 0, N\to \infty}{\longrightarrow} 0,
		\end{align*}
		where $C(x)$ are locally bounded constants.\\
		Now we show that $\bar\Pi^{\ast 2}$ is not differentiable at $G_2\backslash G$. Although we already know that $\bar\Pi^{\ast 2}$ is a polynomial away from $G_2$, we derive a second representation showing that for $x\notin G_2$ it can be differentiated termwise. As the atoms are discrete, there is a largest atom $c<x$ implying that $G\cap (c,x]=\emptyset$. Taking into account (\ref{mm}), that $\bar\Pi$ is constant in $(x-c,x]$, and the definition of $G_2$, we see that $\1_{[0,a]}\ast \bar\Pi$ is infinitely differentiable away from $G_2$. To show that the infinite sum (\ref{hie}) can be differentiated termwise for $n=1$, we derive a locally absolutely and uniformly summable in $x$ upper bound for the series of derivatives:
		\begin{align*}
			\sum_{a\in G} \Pi(\{a\})\left|\frac{d}{dx}\mathbf{1}_{[0,a]}\ast \bar\Pi(x)\right|
			&=\sum_{x\leq a\in G} \Pi(\{a\})\bar\Pi(x)+\sum_{x>a\in G} \Pi(\{a\})\left|\bar\Pi(x)-\bar\Pi(x-a)\right|.
		\end{align*}
		The first summand is bounded as $\bar\Pi$ is locally constant. For the second, the sum only runs over $x-c<a$ as otherwise $\bar\Pi(x)-\bar\Pi(x-a)=0$. There is no accumulation point of atoms at $x$, so the second summand is bounded by $C\Pi([x-c,x])<\infty$ and, hence, $\bar\Pi^{\ast 2}$ can be differentiated termwise.\\
		With this in hand we can show non-differentiability at $G_{2}\backslash G$. Let $b\in G_{2}\backslash G$, then, since $G$ has a possible accumulation point only at $0$, there is a neighbourhood $A(b)$ of $b$ such that $[y,x]\cap G_{2}=\{b\}$ for $y, x\in A(b)$. Since $b\in G_{2}\backslash G$ we have $\bar\Pi(z)=\bar\Pi(b)$ for all $z\in A(b)$ so that
		\begin{align*}
			&\big(\bar\Pi^{\ast 2}\big)'(b+)-\big(\bar\Pi^{\ast 2}\big)'(b-)\\
			&=\lim_{x\downarrow b,\,y\uparrow b}\left[\big(\bar\Pi^{\ast 2}\big)'(x)-\big(\bar\Pi^{\ast 2}\big)'(y)\right]\\
			&=\lim_{x\downarrow b,\,y\uparrow b}\Big(\sum_{b\leq a\in G} \Pi(\{a\})(\bar\Pi(x)-\bar\Pi(y))
			+\sum_{b>a\in G} \Pi(\{a\})\left(\bar\Pi(x)-\bar\Pi(x-a)-\bar\Pi(y)+\bar\Pi(y-a)\right)\Big)\\
			&=\lim_{x\downarrow b,\,y\uparrow b}\sum_{b>a\in G}\Pi(\{a\})(\bar\Pi(y-a)-\bar\Pi(x-a))\\
			&=\sum_{b>a\in G}\Pi(\{a\})\Pi(\{b-a\}).
		\end{align*}
		Since $b\in G_{2}\backslash G$ and $G$ has a possible accumulation point only at zero, the last sum has a finite number of summands and is strictly positive. The exchange of limit and summation is possible since for all $y$ and $x$ sufficiently close to $b$ and all $a\in G$, such that $a<c$, for some $c>0$, $y-a\in A(b)$ and $x-a\in A(b)$. The latter implies that $\bar\Pi(x-a)=\bar\Pi(y-a)$ and the sum in the limit is a finite sum.  This proves (b) for $n=2$.\medskip
		
		Next, assume that $\bar\Pi^{\ast n}$ is everywhere $(n-2)$-times differentiable and that for $b\in G_{n}\backslash G_{n-1}$
		\begin{align*}
			0<\frac{d^{n-1}}{dx^{n-1}}\bar\Pi^{\ast n}(b+)-\frac{d^{n-1}}{dx^{n-1}}\bar\Pi^{\ast n}(b-)<\infty.
		\end{align*}
		To show that at the critical points $\bar\Pi^{\ast (n+1)}$ is everywhere $(n-1)$-times differentiable we again verify termwise the differentiability:
		\begin{align*}
			&\sum_{a\in G}\Pi(\{a\})\left|\frac{d^{n-1}}{dx^{n-1}}\1_{[0,a]}\ast \bar\Pi^{\ast n}(x)\right|\\
			&=\sum_{x>a\in G}\Pi(\{a\})\left|\frac{d^{n-2}}{dx^{n-2}}\big(\bar\Pi^{\ast n}(x)-\bar\Pi^{\ast n}(x-a)\big)\right|+\sum_{x\leq a\in G}\Pi(\{a\})\left|\frac{d^{n-2}}{dx^{n-2}}\bar\Pi^{\ast n}(x)\right|.
		\end{align*}
		As $\bar\Pi^{\ast n}$ is everywhere $(n-2)$-times continuously differentiable, the second sum is locally bounded by the property (\ref{c}). Clearly for any $x\in G_n\backslash G_{n-1}$ there is an interval $(x-d,x)$ such that $(x-d,x)\cap G_{n}=\emptyset$ and $d<x$ because $G$ has a possible accumulation point only at zero. The latter also implies that there are at most finitely many atoms $a$ such that $x>a\geq d$ and therefore we need to study only
		\begin{align*}
			\sum_{d>a\in G}\Pi(\{a\})\left|\frac{d^{n-2}}{dx^{n-2}}\big(\bar\Pi^{\ast n}(x)-\bar\Pi^{\ast n}(x-a)\big)\right|.
		\end{align*}
		But from (a), $\bar\Pi^{\ast n}(y)$ is a polynomial of order at most $n-1$ for all $y\in(x-d,x)$ so that by the mean value theorem
		\begin{align}\label{le}
			\left|\frac{d^{n-2}}{dx^{n-2}}\bar\Pi^{\ast n}(x)-\frac{d^{n-2}}{dx^{n-2}}\bar\Pi^{\ast n}(x-a)\right|\leq Ca.
		\end{align}
		As $\sum_{a<d}a\Pi(\{a\})<\infty$ we can interchange differentiation and summation and then using the induction hypothesis that $\bar\Pi^{\ast n}$ is $(n-2)$-times continuously differentiable everywhere. In total we conclude that $\bar\Pi^{\ast (n+1)}$ is $(n-1)$-times continuously differentiable everywhere. \medskip

	Finally our task is to show that for $b\in G_{n+1}\backslash G_{n}$
  	\begin{align*}
  		0<\frac{d^{n}}{dx^{n}}\bar\Pi^{\ast (n+1)}(b+)-\frac{d^{n}}{dx^{n}}\bar\Pi^{\ast (n+1)}(b-)<\infty.
  	\end{align*}
  	As $b\in G_{n+1}\backslash G_{n}$ and there is a neighbourhood $A(b)$ of $b$ such that $A(b)\cap{G_{n+1}}=\{b\}$, the arguments above imply that we can differentiate termwise $n$-times \eqref{hie} on $A(b)\backslash b$ to get
  	\begin{align*}
  		\frac{d^{n}}{dx^{n}}\bar\Pi^{\ast (n+1)}(x)=\sum_{x-a\not\in A(b),a\in G}\Pi(\{a\})\frac{d^{n}}{dx^{n}}1_{[0,a]}\ast\bar\Pi^{\ast n}(x)+\sum_{x-a\in A(b),a\in G}\Pi(\{a\})\frac{d^{n}}{dx^{n}}1_{[0,a]}\ast\bar\Pi^{\ast n}(x).
  	\end{align*}
  	Using \eqref{mm} we rewrite the latter as
  	\begin{align*}
  		&\frac{d^{n}}{dx^{n}}\bar\Pi^{\ast (n+1)}(x)\\
  		&=\sum_{x\leq a\in G}\Pi(\{a\})\frac{d^{n-1}}{dx^{n-1}}\bar\Pi^{\ast n}(x)+\sum_{x> a\in G}\Pi(\{a\})\left(\frac{d^{n-1}}{dx^{n-1}}\bar\Pi^{\ast n}(x)-\frac{d^{n-1}}{dx^{n-1}}\bar\Pi^{\ast n}(x-a)\right).
  	\end{align*}
  	Now, as $b\in G_{n+1}\backslash G_{n}$, $\bar\Pi^{\ast n}(x)$ is continuously differentiable on $A(b)$, $\Pi(\{b\})=0$ and on $A(b)$ $\frac{d^{n-1}}{dx^{n-1}}\bar\Pi^{\ast n}(x)=\frac{d^{n-1}}{dx^{n-1}}\bar\Pi^{\ast n}(y)$, because according to Lemma \ref{l}, $\bar\Pi^{\ast n}(x)$ is a polynomial of order at most $n-1$ on $A(b)$, we obtain
  	\begin{align*}
  	&\frac{d^{n}}{dx^{n}}\bar\Pi^{\ast (n+1)}(b+)-\frac{d^{n}}{dx^{n}}\bar\Pi^{\ast (n+1)}(b-)\\
  		&=\lim_{x\downarrow b,\,y\uparrow b}\left(\frac{d^{n}}{dx^{n}}\bar\Pi^{\ast (n+1)}(x)-\frac{d^{n}}{dx^{n}}\bar\Pi^{\ast (n+1)}(y)\right)\\
  		&=\lim_{x\downarrow b,\,y\uparrow b}\sum_{x> a\in G}\left(\frac{d^{n-1}}{dx^{n-1}}\bar\Pi^{\ast n}(x-a)-\frac{d^{n-1}}{dx^{n-1}}\bar\Pi^{\ast n)}(y-a)\right)\\
  		&=\sum_{b> a\in G}\Pi(\{a\})\left(\frac{d^{n-1}}{dx^{n-1}}\bar\Pi^{\ast n}((b-a)+)-\frac{d^{n-1}}{dx^{n-1}}\bar\Pi^{\ast n}((b-a)-)\right).
  	\end{align*}
  	Since $b\in G_{n+1}\backslash G_{n}$ we have that either $b-a\not\in G_{n}$ or $b-a\in G_{n}\backslash G_{n-1}$, where the latter is possible only for finitely many, but more than zero, $a\in G$. We mention that the interchange of limit and summation is valid since for all $x$ and $y$ sufficiently close to $b$, there is $c>0$, such that for all $a\in G$ such that $a<c$, $x-a\in A(b)$ and $y-a\in A(b)$. We have already argued above that the $n-1$-st derivative of $\bar\Pi^{\ast n}$ is a constant on $A(b)$ due to Lemma \ref{l}. By the induction hypothesis for those $a$
  	\begin{align*}
  		\frac{d^{n-1}}{dx^{n-1}}\bar\Pi^{\ast n}((b-a)+)-\frac{d^{n-1}}{dx^{n-1}}\bar\Pi^{\ast n}((b-a)-)>0.
  	\end{align*}	
  	Thus we conclude the induction and the proof of the lemma.
	\end{proof}

	The observation of the lemma motivates the strategy for the main proofs: first, choose $N$ large enough that the integral of the Laplace inversion representation can be differentiated as often as needed. Secondly, consider the finite sum of iterated convolutions for which critical points for $k$th derivatives only occur for the $k$th summand. Lower order convolutions vanish and higher order convolutions are smooth enough.\smallskip
	 	
	Before we give the main proofs, one more lemma is needed to show how to separate $\bar\Pi$ and $q$ and afterwards absolutely continuous and discrete part of the L\'evy measure.
	
	\begin{lemma}\label{pre}
		Suppose $\Pi$ is purely atomic with possible accumulation point only at zero. If $f$ is infinitely differentiable away from $G_i$ with locally bounded derivatives and integrable at zero, then
		$\bar\Pi\ast f$ is infinitely differentiable away from $G_{i+1}$ with locally bounded derivatives.
	\end{lemma}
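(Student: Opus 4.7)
The plan is to mimic the strategy of Lemma \ref{l}(b): use the atomic expansion $\bar\Pi(z)=\sum_{a\in G}\Pi(\{a\})\mathbf{1}_{[0,a]}(z)$ and Fubini's theorem to write
\[
\bar\Pi\ast f(x)=\sum_{a\in G}\Pi(\{a\})\,\mathbf{1}_{[0,a]}\ast f(x),
\]
and then verify that this series may be differentiated termwise any number of times at any $x_0\notin G_{i+1}$, with absolutely and locally uniformly summable derivatives. The key preliminary topological fact is that $G_{i+1}$ is closed in $(0,\infty)$: if $x_n\in G_{i+1}$ converges to $x>0$, write $x_n=g_{1,n}+\cdots+g_{j_n,n}$ with $j_n\leq i+1$ and pass to a subsequence along which $j_n=j$ is constant and each $g_{k,n}$ converges in $[0,\infty)$; since $G$ accumulates only at $0$, each nonzero limit belongs to $G$, so $x$ is a sum of at most $i+1$ atoms and lies in $G_{i+1}$. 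Hence for $x_0\notin G_{i+1}$ one can choose $\eta>0$ so that $A(x_0):=(x_0-\eta,x_0+\eta)$ is disjoint from $G_{i+1}$, and in particular contains no atom of $\Pi$. The crucial implication is that for every $x\in A(x_0)$ and every $a\in G$ with $a<x$, we must have $x-a\notin G_i$, for otherwise $x=a+(x-a)\in G_{i+1}$, contradicting $x\in A(x_0)$.

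Because $A(x_0)\cap G=\emptyset$, the case split $a<x$ versus $a\geq x$ in (\ref{mm}) is uniform in $x\in A(x_0)$: each atom satisfies either $a\leq x_0-\eta$ or $a\geq x_0+\eta$. Consequently each summand $\mathbf{1}_{[0,a]}\ast f$ is everywhere infinitely differentiable on $A(x_0)$, with $k$-th derivative
\[
\frac{d^k}{dx^k}\mathbf{1}_{[0,a]}\ast f(x)=\begin{cases} f^{(k-1)}(x)-f^{(k-1)}(x-a), & a\leq x_0-\eta,\\ f^{(k-1)}(x), & a\geq x_0+\eta,\end{cases}
\]
both of which make sense because $f$ is $C^\infty$ at $x$ and at $x-a$ by the previous paragraph. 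The main technical step is then to show that, for each $k\geq 1$, the series of termwise $k$-th derivatives converges absolutely and uniformly on compact subsets of $A(x_0)$. Large atoms $a\geq x_0+\eta$ contribute $f^{(k-1)}(x)\,\bar\Pi(x_0+\eta)<\infty$. Atoms lying in a compact range $[\epsilon,x_0-\eta]$ are finitely many by the accumulation hypothesis on $G$ and yield a locally bounded contribution, since $f^{(k-1)}$ is locally bounded away from $G_i$. For the small atoms $a<\epsilon$ one shrinks $\epsilon$ so that $[x_0-\eta-\epsilon,x_0+\eta]\cap G_i=\emptyset$, applies the mean value theorem to obtain $|f^{(k-1)}(x)-f^{(k-1)}(x-a)|\leq a\sup |f^{(k)}|$ with a locally uniform bound, and sums absolutely via $\int_0^1 a\,\Pi(da)<\infty$ from (\ref{c}). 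This simultaneously yields termwise differentiability and local boundedness of the $k$-th derivative of $\bar\Pi\ast f$ on $A(x_0)$; since $k$ and $x_0\notin G_{i+1}$ are arbitrary, the claim follows.

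The main obstacle I anticipate is the small-atom regime: one must guarantee that $x-a$ stays in a region where $f$ is smooth with controllably bounded higher derivatives, uniformly in $a$ as $a\to 0$. This is exactly what the closedness of $G_i$ in $(0,\infty)$, together with the implication $x\notin G_{i+1}\Rightarrow x-a\notin G_i$ for $a\in G$ with $a<x$, is designed to handle; once these ingredients are in place, the rest of the argument is a routine combination of the mean value theorem, local boundedness of $f^{(k)}$, and the L\'evy integrability condition (\ref{c}).
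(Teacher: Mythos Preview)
Your proposal is correct and follows essentially the same route as the paper: expand $\bar\Pi$ as $\sum_{a\in G}\Pi(\{a\})\mathbf{1}_{[0,a]}$, differentiate the resulting series termwise via (\ref{mm}), and justify this by splitting atoms into a finite ``large'' set and a ``small'' tail handled by the mean value theorem together with $\int_0^1 a\,\Pi(da)<\infty$. Your version is in fact slightly more explicit than the paper's in spelling out the closedness of $G_{i+1}$ in $(0,\infty)$ and the implication $x\notin G_{i+1},\ a\in G,\ a<x\Rightarrow x-a\notin G_i$; one small point to make airtight is that when you ``shrink $\epsilon$ so that $[x_0-\eta-\epsilon,x_0+\eta]\cap G_i=\emptyset$'' you should first have taken $\eta$ strictly smaller than the distance from $x_0$ to $G_{i+1}$, so that the closed interval is still inside the original gap.
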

	\begin{proof}
		First note that by (\ref{mm}) for arbitrary $a>0$
		\begin{align}
			\frac{d^k}{dx^k}\mathbf{1}_{[0,a]}\ast f(x)=\begin{cases}
						\frac{d^{k-1}}{dx^{k-1}}f(x)-\frac{d^{k-1}}{dx^{k-1}}f(x-a)&:a<x,\\
			                                            \frac{d^{k-1}}{dx^{k-1}}f(x)&:a\geq x,
			                                             \end{cases}\label{e}
		\end{align}
		From this simple identity the claim follows for the special case $\Pi=\delta_a$. For infinitely many atoms the difficulty appears from the fact that $\bar\Pi$ is an infinite sum of indicator functions so that summation and differentiation in
		\begin{align}\label{xy}
			\bar\Pi\ast f(x)&=\left(\sum_{a\in G}\Pi(\{a\})\mathbf{1}_{[0,a]}\right)\ast f(x)=\sum_{a\in G}\Pi(\{a\})\left(\mathbf{1}_{[0,a]}\ast f\right)(x)
		\end{align}
		need to be interchanged. To justify the Fubini flip in (\ref{xy}) it suffices to show locally uniform absolute convergence of the right hand side. An upper bound can be obtained as
		\begin{align*}
			&\quad\sum_{x>a\in G}\Pi(\{a\})\int_{x-a}^x|f(y)|\,dy+\sum_{x\leq a\in G}\Pi(\{a\})\int_0^x|f(y)|\,dy\\
			&\leq\sup_{y\in [x-c,x]}|f(y)|\sum_{x>a\in G}\Pi(\{a\})a+\int_0^x|f(y)|\,dy\sum_{x\leq a\in G}\Pi(\{a\}),
		\end{align*}
		where $c$ denotes the largest atom strictly smaller than $x$ which exists as the atoms are discrete away from zero. The right hand side is finite by property (\ref{c}), continuity of $f$, and integrability of $f$ at zero.\\
		Having justified (\ref{xy}), we now show that away from $G_{i+1}$
	 	\begin{align}\label{MMM1}
	 		\frac{d^{k}}{dx^{k}}\bar\Pi\ast f(x)=\sum_{a\in G}\Pi(\{a\})\frac{d^{k}}{dx^{k}}\left(\mathbf{1}_{[0,a]}\ast f\right)(x)
	 	\end{align}
	 	which we prove by showing locally uniform absolute convergence of the series of derivatives. First note that as $x\notin G_{i+1}$ there is $c'>0$ such that $(x-c',x+c')\cap G_{i+1}=\varnothing$. As the set $B=\{a\in G: a>c'\}$ is finite, we appeal to (\ref{e}) and the mean value theorem, we obtain for $a\in B^{c}\cap G$
	 	\begin{align*}
			\frac{d^k}{dx^k}\mathbf{1}_{[0,a]}\ast f(x)\leq
				\sup_{y\in [x-c',x]\cap G}\Big|\left(\frac{d^{k}}{dy^{k}}f(y)\right)a\Big|.&
		\end{align*}
		 Then
		\begin{align*}
			&\quad\sum_{a\in B^{c}\cap G}\Pi(\{a\})\left|\frac{d^k}{dx^k}(\mathbf{1}_{[0,a]}\ast f)(x)\right|\\
			&\leq \sup_{y\in [x-c,x]}\left|\frac{d^{k}}{dy^{k}}f(y)\right|\sum_{a\in B^{c}\cap G
}\Pi(\{a\})a,
		\end{align*}
		which again is bounded by property (\ref{c}) and local boundedness of derivatives of $f$. This is enought to show \eqref{MMM1} as $B$ is a finite set. \\
		In total we proved that $\bar\Pi\ast f$ is infinitely differentiable away from $G_{i+1}$ and derivatives may be taken termwise. Local boundedness of the derivatives follows from the above estimate.
	\end{proof}

	With the lemmas in mind we can give the proofs of the main results.
	
	\begin{proof}[Proof of Theorem \ref{t4}]	
		To prove higher order (non)differentiability, we consider the Laplace inversion representation of Proposition \ref{laplaceinversionu} for $N>-\frac{k}{\beta(\Pi)-1}$:
		\begin{align}
			 \frac{d^k}{dx^k}u^{(q)}(x)=\frac{d^{k}}{dx^{k}}\sum_{n=1}^{N-1}\frac{(-1)^n}{\delta^{n+1}}\1\ast(\bar\Pi+q)^{\ast n}(x)+\frac{d^{k}}{dx^{k}}e^{\lambda x}\frac{1}{2\pi}\int_{-\infty}^{\infty}e^{i\theta x}g^{N,q}(\lambda+i\theta)\,d\theta.\label{be}
		\end{align}
		As $N$ was assumed to be large enough, the integral is everywhere $k$-times differentiable by Lemma \ref{l111} so that the critical points claimed in the theorem have to come from differentiating the finite (possibly very large) sum. As motivated before the proof, $N$ can be chosen arbitrarily large as the higher order convolutions are smooth enough.\\
		Since representation \eqref{be} is valid replacing $k$ by some $n\leq k$, we proceed by induction showing that {$u^{(q)}$ is $k$-times differentiable in $x$ iff $x\not\in G_{k}$}. The induction basis for $n=1$ is true in complete generality (without Assumption (\textbf{A})) due to Corollary \ref{uu}. Assume next that the claim is true for some $n<k$ and consider \eqref{be} with $k$ replaced by $n+1$. Since the $n$th derivative does not exist on $G_{n}$, we only need to consider the $(n+1)$st derivative on $\mathbb{R}_{+}\backslash  G_{n}$. The integral term is $(n+1)$-times differentiable as seen above so that we only consider the sum
		\begin{align*}
			&\quad\sum_{i=1}^{N-1}\frac{(-1)^i}{\delta^{i+1}}\1\ast(\bar\Pi+q)^{\ast i}(x)\\
			&=\sum_{i=1}^{n}\frac{(-1)^i}{\delta^{i+1}}\1\ast (\bar\Pi+q)^{\ast i}(x)+\frac{(-1)^{n+1}}{\delta^{n+2}}\1\ast(\bar\Pi+q)^{\ast (n+1)}(x)+\sum_{i=n+2}^{N-1}\frac{(-1)^i}{\delta^{i+1}}\1\ast(\bar\Pi+q)^{\ast i}(x).
		\end{align*}
		We start with the case $q=0$. By Lemma \ref{l}, $\bar\Pi^{\ast i}(x)$ is everywhere $(i-2)$-times continuously differentiable implying everywhere the existence of $n+1$ continuous derivatives for the third summand. According to Lemma \ref{l} the first sum is everywhere sufficiently differentiable away from $G_{n}$. Therefore we are left to deal with the middle term. But again according to Lemma \ref{l}, $\bar\Pi^{\ast (n+1)}$ is $n-$times differentiable on $\mathbb{R}_{+}\backslash  G_{n+1}$ with jumps of the $(n+1)$st derivative on $G_{n+1}\backslash G_{n}$. This shows that the $(n+1)$st derivative of $u$ exists iff $x\notin G_{n+1}$. In particular, setting $k=n+1$ and using ${U}'=u$, the proof is complete for $q=0$.
		To extend the result to $q>0$, we use that by linearity
		\begin{align*}
			(\bar\Pi+q)^{\ast n}(x)
			&=\sum_{k=0}^n\left(n\atop k\right)\bar\Pi^{\ast k}\ast q^{\ast (n-k)}(x)\\
			&=\bar\Pi^{\ast n}(x)+\sum_{k=0}^{n-1}\left(n\atop k\right)\bar\Pi^{\ast k}\ast q^{\ast (n-k)}(x).
		\end{align*}
		Plugging this into the previous equation reduces the problem to the case $q=0$ and additional convolutions with higher order convolutions of the constant function $q$. By Lemma \ref{pre} and induction, smoothness of 			higher convolutions of the constant function $q$ implies that the convolutions $\bar\Pi^{\ast k}\ast q^{\ast (n-k)}$ are differentiable away from $G_k$. As $k\leq n-1$ this shows that the additional convolutions do not generate 	additional points of non-differentiability.\\
		Hence, that the claimed differentiability property for $u^{(q)}$ follows from that for $u$.
	\end{proof}

	\begin{proof}[Proof of Theorem \ref{t9}:]
		The strategy is similar to the one of Theorem \ref{t4}. Choosing again $N>-\frac{k}{\beta(\Pi)-1}$, representation (\ref{be}) holds by Proposition \ref{laplaceinversionu} and taking into account Lemma \ref{l111} implies that we 		 only need to discuss differentiability of $\bar\Pi^{\ast n}=(\bar\Pi_1+\bar\Pi_2+q)^{\ast n}$ for $n=1,...,N-1$. \\
		The first term $\bar\Pi_1+\bar\Pi_2+q$ is infinitely differentiable precisely for any $x\notin G$ as $\bar\Pi_1$ is constant away from the atoms and jumps downwards on $G$ and $\bar\Pi_2$ is infinitely differentiable. For the 		 iterated convolutions we have to separate the contributions of $\bar\Pi_1$ and $\bar\Pi_2$. Writing again
		\begin{align*}
			(\bar\Pi_1+\bar\Pi_2+q)^{\ast n}(x)
			&=\sum_{k=0}^n\left(n\atop k\right)\bar\Pi_1^{\ast k}\ast (\bar\Pi_2+q)^{\ast (n-k)}(x)\\
			&=\bar\Pi_1^{\ast n}(x)+\sum_{k=0}^{n-1}\left(n\atop k\right)\bar\Pi_1^{\ast k}\ast (\bar\Pi_2+q)^{\ast (n-k)}(x)
		\end{align*}
		the problem is reduced to pure convolutions of $\bar\Pi_1$ and mixed convolutions. The pure convolutions have been analyzed in the course of the proof of Theorem \ref{t4} and we have the claimed differentiability 			behavior of the theorem.\\
		The proof is complete if we can show that smoothness of $\bar\Pi_2+q$ makes the mixed convolutions $\bar\Pi_1^{\ast k}\ast (\bar\Pi_2+q)^{\ast (n-k)}$ everywhere infinitely differentiable away from $G_{k}$. To apply Lemma \ref{pre}, we use Lemma \ref{lem:dif} and (\ref{iter}) to see that $(\bar\Pi_2+q)^{\ast l}$ is everywhere infinitely differentiable and integrable at zero for arbitrary integer $l$. Hence, $\bar\Pi_1\ast (\bar\Pi_2+q)^{\ast l}$ is infinitely 				differentiable away from $G$ and furthermore integrable at zero by the same arguments used to derive (\ref{iter}). Inductively, Lemma \ref{pre} shows that $\bar\Pi_1^{\ast k}\ast (\bar\Pi_2+q)^{\ast l}$ is infinitely 				 differentiable away from $G_k$ for any integers $k,l$. As $k\leq n-1$ this shows that no additional points of non-differentiability are caused by $\bar\Pi_2$.
	\end{proof}

\subsection{Asymptotic Behavior}

	In contrast to the application to differentiability of the Laplace inversion representation, we now apply the series representation to find the asymptotics of $u^{(q)}$ and it's derivative at zero. The Laplace inversion representation is then applied to the asymptotics of ${u}'$ at infinity.
	
	\begin{proof}[Proof of Theorem \ref{p2}:]
		From Equation (\ref{series}) it follows that
		\begin{align}
			\frac{\frac{1}{\delta}-u^{(q)}(x)}{\frac{1}{\delta^2}\1\ast( q+\bar\Pi)(x)}=1+\sum_{n=2}^{\infty}\frac{(-1)^{n+1}\frac{\1\ast (q+\bar\Pi)^{\ast n}(x)}{\delta^{n+1}}}{\frac{1}{\delta^2}\1\ast(q+\bar\Pi)(x)}.\label{WE}
		\end{align}	
		Hence, we need to show that the latter summand of the right hand side converges to zero (absolutely) as $x$ tends to zero. To this end we use the estimate (\ref{iter}) to obtain
		\begin{align*}
			\left|\frac{\sum_{n=2}^{\infty}(-1)^{n+1}\frac{\1\ast (q+\bar\Pi)^{\ast n}(x)}{\delta^{n+1}}}{\frac{1}{\delta^2}(\1\ast (q+\bar\Pi)(x))}\right|\leq\sum_{n=1}^{\infty}\frac{\big(\1\ast(q+ \bar\Pi)(x)\big)^n}{\delta^{n}}.
		\end{align*}
		Clearly, $\lim_{x\to 0}\1\ast(q+\bar\Pi)(x)=0$ by the property (\ref{c}) so that for $x$ small enough the right hand side is bounded from above by
		\begin{align*}
			\frac{1}{1-\frac{\1\ast(q+\bar\Pi)(x)}{\delta}}-1.
		\end{align*}
		This shows that the right hand side vanishes for $x$ tending to zero proving (\ref{WE}). The higher order asymptotics follow in precisely the same manner.
	\end{proof}	
	The refined first order asymptotics can now be deduced directly:
	\begin{proof}[Proof of Corollary \ref{c1}:]
		By Theorem \ref{p2}, $u^{(q)}$ behaves asymptotically like $\frac{1}{\delta^2}(qx+\int_0^x\bar\Pi(y)\,dy)$ at zero. As $\bar\Pi$ decreases we have for $x$ sufficiently small
		\begin{align*}
			\frac{u^{(q)}(x)}{x}\geq\frac{ qx+x\bar\Pi(x)}{\delta^2 x}=\frac{q+\bar\Pi(x)}{\delta^2}
		\end{align*}
		and the right hand side diverges if the L\'evy measure is infinite. In the finite case we obtain similarly for $x$ sufficiently small
		\begin{align*}
			\frac{q+\bar\Pi(x)}{\delta^2}\leq \frac{u^{(q)}(x)}{x}\leq \frac{q+\bar\Pi(0)}{\delta^2}
		\end{align*}
		proving the claim.
	\end{proof}

	\begin{proof}[Proof of Theorem \ref{t7}:]
	 	Without loss of generality we assume $\delta=1$. Also consider the case when $\bar\Pi(0+)=\infty$. To prove the theorem, taking into account Equation (\ref{series}) we need to show that for $\beta(\Pi)<\frac{n}{n+1}$
		\begin{align}\label{hl}
			\lim_{x\to 0}\sum_{k=n+1}^{\infty}(-1)^k (q+\bar\Pi)^{\ast k}(x)=0.
		\end{align}
   		Note that taking into account Lemma \ref{l4} we may appeal to $q+\bar\Pi(y)\leq C y^{-(\beta(\Pi)+\eps)}$ for $\eps>0$ satisfying $\beta{(\Pi)}+\eps<1$.  Then for $y$ small enough
   		\begin{align*}
   			(q+\bar\Pi)^{*2}(y)&=\int_{0}^{y}(q+\bar\Pi)(z)(q+\bar\Pi(y-z))dz \\
   			&\leq C'\int_{0}^{y}z^{-(\beta(\Pi)+\eps)}(y-z)^{-(\beta(\Pi)+\eps)}dz\\
   			&=C_{2}y^{-2(\beta(\Pi)+\eps)+1}.
   		\end{align*}
   		By induction, for all $y$ such that $q+\bar\Pi(y)\leq y^{-\beta(\Pi)-\eps}$, it is easy to show that for each $k$ there is a constant $C_{k}$ depending only on $\beta(\Pi)$ such that
   		\begin{equation*}
   			(q+\bar\Pi)^{*k}(y)\leq C_{k}y^{-k(\beta(\Pi)+\eps)+k-1}.
   		\end{equation*}
   		For $\eps$ small enough, the quantity $-k(\beta(\Pi)+\eps)+(k-1)$ is strictly positive for $k>n$ by the assumption $\beta(\Pi)<\frac{n}{n+1}$. Therefore
   		$
   			\lim_{x\to 0}(q+\bar\Pi)^{*(n+1)}(x)=0
   		$
		and with (\ref{iter}) for $x>0$
   		\begin{align*}
   			(q+\bar\Pi)^{\ast(n+1+k)}(x)&\leq \sup_{y\leq x}\big((q+\bar\Pi)^{*(n+1)}(y)\big)\1*(q+\bar\Pi)^{*k}(x)\\
   			&\leq\sup_{y\leq x}\big((q+\bar\Pi)^{*(n+1)}(y)\big)(\1*(q+\bar\Pi)(x))^{k}.
   		\end{align*}
   		Again taking into account property (\ref{c}), we see that $\lim_{x\to 0}\1*(q+\bar\Pi)(x)=0$ and also $\lim_{x\to 0}(q+\bar\Pi)^{*(n+1)}(x)=0$. Furthermore, we have $q+\bar\Pi(x)\sim \bar\Pi(x)$ so that we can easily deduce (\ref{hl}).

   If $\bar\Pi(0+)<\infty$ then $\beta(\Pi)=0$ and clearly \eqref{hl} holds with $n=1$. Thus we conclude the proof.	
   \end{proof}

	For the asymptotic behavior at infinity we need to have a more carefull look at the Laplace inversion representation.
	
	\begin{proof}[Proof of Theorem \ref{t8}:]
		Formally setting $\lambda=0$ in the Laplace inversion integral of (\ref{0}) leads to
		\begin{align}
			{u}'(x+)&=-\frac{\bar\Pi(x+)}{\delta^2}+\sum_{n=2}^{N-1}\frac{(-1)^n}{\delta^{n+1}}\bar\Pi^{\ast n}(x)+\frac{1}{2\pi}\int_{-\infty}^{\infty}e^{i\theta x}m^N(\theta)\,d\theta,\\
			{u}'(x-)&=-\frac{\bar\Pi(x-)}{\delta^2}+\sum_{n=2}^{N-1}\frac{(-1)^n}{\delta^{n+1}}\bar\Pi^{\ast n}(x)+\frac{1}{2\pi}\int_{-\infty}^{\infty}e^{i\theta x}m^N(\theta)\,d\theta,
		\end{align}	
		where
		\begin{align*}
			m^N(\theta)=\frac{(-\mathcal F \bar\Pi(\theta))^N}{\delta^{N+1}(1+\frac{1}{\delta}\mathcal F \bar\Pi(\theta))}.
		\end{align*}
		To see that $m^N$ is well-defined, recall from Remark \ref{remark} that for finite mean L\'evy measure the denominator is bounded away from zero. To prove the representation rigorously we need to send $\lambda$ to zero in the Laplace inversion representation (\ref{0}), (\ref{00}). For this sake, convergence under the integral and the interchange of limit and integration need to be justified. Here, the finite mean assumption is crucial. As convergence of the exponential is trivial, we only show that $\lim_{\lambda\to 0}h^N(\lambda+i\theta)=h^N(i\theta)=m^N(\theta)$ for fixed $\theta$. By definition of $h^N$ it suffices to show
		\begin{align*}
			\lim_{\lambda\to 0}\mathcal L \bar\Pi^{\ast l}(\lambda+i\theta)=\mathcal L\bar\Pi^{\ast l}(i\theta)=\mathcal F\bar\Pi^{\ast l}(\theta)
		\end{align*}
		for $l=1$ and $l=N$. Here we use that the additional assumption $\E[X_1]<\infty$ is equivalent to $\int_0^{\infty}\bar\Pi(x)\,dx<\infty$ to apply dominated convergence justified by the uniform (in $\lambda$) upper bound
		\begin{align*}
			\int_0^{\infty}\Big|e^{-x(\lambda+i\theta)}\bar\Pi^{\ast l}(x)\Big|\,dx\leq \int_0^{\infty} \bar\Pi^{\ast l}(x)\,dx\leq \Big(\int_0^{\infty}\bar\Pi(x)\,dx\Big)^l<\infty.
		\end{align*}
		Having shown pointwise convergence of $h^N$, we now verify dominated convergence for $\lambda$ tending to zero. The estimate (\ref{he2}) is not strong enough as around the real axis the upper bound explodes with $\lambda$ tending to zero. The additional assumption again leads to the (coarse) uniform upper bound
		\begin{align*}
			&\big|Im(\mathcal L\bar\Pi(\lambda+i\theta))\big|=\left|\int_{0}^{\infty}\sin(\theta y)r_{\lambda}(y)dy\right|\leq \int_0^{\infty}\bar\Pi(x)\,dx<\infty
		\end{align*}
		and similarly for the real part $Re(\mathcal L\bar\Pi(\lambda+i\theta))$.  For large $\theta$ we employ the estimate (\ref{he2}) for sufficiently small $\eps>0$ so that we obtain from our choice of $N$ the integrable in $\theta$ upper bound (uniformly in $\lambda$)
		\begin{align}\label{xx}
			\left|h^N(\lambda+i\theta)\right|\leq  \begin{cases}
					\int_0^{\infty}\bar\Pi(x)\,dx&:|\theta|\leq 1,\\
       					C|\theta|^{N(\beta(\Pi)+\eps-1)}&:|\theta|>1.
			             \end{cases}		
		\end{align}
		Exploiting the Fourier inversion representation for $N$ large enough we can complete the proof. Apparently, it suffices to show that $\lim_{x\to \infty}\bar\Pi^{\ast n}(x)=0$ for $n=1,...,N-1$ and
		\begin{align*}
		 	\lim_{x\to \infty}\int_{-\infty}^{\infty}e^{i\theta x}m^N(\theta)\,d\theta=0.
		\end{align*}
		The first is a consequence of $\int_0^{\infty}\bar\Pi^{\ast n}(x)\,dx<\infty$ for which we take into account (\ref{iter}) and the assumption. The integral vanishes at infinity by the Riemann-Lebesgue theorem as soon as we justify that $m^N$ is integrable in $\theta$. But this follows from the upper bound (\ref{xx}) which, being uniform in $\lambda$, is also valid for $m^N(\theta)=h^N(i\theta)$.
		
	\end{proof}

\section*{Acknowledgment}
	The authors would like to thank Frank Aurzada for many discussions on the subject and an anonymous referee for a very careful reading of the manuscript.

\end{document}